\def\newthm#1#2{\newtheorem{#1}[dummy]{#2}%
  \expandafter\def\csname#2\endcsname##1{\hyperref[#1:##1]{#2~\ref*{#1:##1}}}}
\theoremstyle{definition}
\newcommand{\Section}[1]{\hyperref[sec:#1]{Section~\ref*{sec:#1}}}
\newcommand{\Table}[1]{\hyperref[tab:#1]{Table~\ref*{tab:#1}}}
\newcommand{\eqn}[1]{\hyperref[eqn:#1]{(\ref*{eqn:#1})}}
\DeclareMathOperator{\GL}{GL}
\DeclareMathOperator{\SL}{SL}
\DeclareMathOperator{\Sp}{Sp}
\DeclareMathOperator{\SO}{SO}
\DeclareMathOperator{\Gr}{Gr}
\DeclareMathOperator{\LG}{LG}
\DeclareMathOperator{\SF}{SF}
\DeclareMathOperator{\OG}{OG}
\DeclareMathOperator{\Span}{Span}
\DeclareMathOperator{\QH}{QH}
\DeclareMathOperator{\QK}{QK}
\DeclareMathOperator{\codim}{codim}
\DeclareMathOperator{\Lie}{Lie}
\DeclareMathOperator{\diag}{diag}
\newcommand{\ssm}{\smallsetminus}
\newcommand{\bP}{{\mathbb P}}
\newcommand{\C}{{\mathbb C}}
\newcommand{\Q}{{\mathbb Q}}
\newcommand{\Z}{{\mathbb Z}}
\newcommand{\N}{{\mathbb N}}
\newcommand{\cO}{{\mathcal O}}
\newcommand{\cP}{{\mathcal P}}
\newcommand{\euler}[1]{\chi_{_{#1}}}
\newcommand{\pt}{\text{point}}
\newcommand{\al}{{\alpha}}
\newcommand{\be}{{\beta}}
\newcommand{\ga}{{\gamma}}
\newcommand{\vep}{{\varepsilon}}
\newcommand{\la}{{\lambda}}
\newcommand{\La}{{\Lambda}}
\newcommand{\ev}{\operatorname{ev}}
\newcommand{\wh}{\widehat}
\newcommand{\wb}{\overline}
\newcommand{\ov}{\overline}
\newcommand{\pic}[2]{\includegraphics[scale=#1]{#2}}
\newcommand{\ignore}[1]{}
\newcommand{\Mb}{\wb{\mathcal M}}
\newcommand{\ch}{\operatorname{ch}}
\newcommand{\Gq}{\Gamma\llbracket q\rrbracket}
\newcommand{\noin}{\noindent}
\begin{document}

\title{A Chevalley formula for the equivariant quantum $K$-theory of
  cominuscule varieties}

\date{June 6, 2017}

\author{Anders~S.~Buch}
\address{Department of Mathematics, Rutgers University, 110
  Frelinghuysen Road, Piscataway, NJ 08854, USA}
\email{asbuch@math.rutgers.edu}

\author{Pierre--Emmanuel Chaput}
\address{Domaine Scientifique Victor Grignard, 239, Boulevard des
  Aiguillettes, Universit{\'e} de Lorraine, B.P.  70239,
  F-54506 Vandoeuvre-l{\`e}s-Nancy Cedex, France}
\email{pierre-emmanuel.chaput@univ-lorraine.fr}

\author{Leonardo~C.~Mihalcea}
\address{Department of Mathematics, Virginia Tech University, 460
  McBryde, Blacksburg VA 24060, USA}
\email{lmihalce@math.vt.edu}

\author{Nicolas Perrin}
\address{Laboratoire de Math\'ematiques de Versailles, UVSQ, CNRS,
  Universit\'e Paris-Saclay, 78035 Versailles, France}
\email{nicolas.perrin@uvsq.fr}

\subjclass[2010]{Primary 14N35; Secondary 19E08, 14N15, 14M15}

\keywords{Quantum $K$-theory, Chevalley formula, Gromov-Witten
  invariants, Schubert structure constants, cominuscule flag
  varieties, Molev-Sagan equations}

\thanks{The first author was supported in part by NSF grants
  DMS-1205351 and DMS-1503662.}

\thanks{The third author was supported in part by NSA grants
  H98230-13-1-0208 and H98320-16-1-0013, and a Simons Collaboration
  Grant.}

\thanks{The fourth author was supported by a public grant as part of
  the Investissement d'avenir project, reference ANR-11-LABX-0056-LMH,
  LabEx LMH}

\begin{abstract}
  We prove a type-uniform Chevalley formula for multiplication with
  divisor classes in the equivariant quantum $K$-theory ring of any
  cominuscule flag variety $G/P$.  We also prove that multiplication
  with divisor classes determines the equivariant quantum $K$-theory
  of arbitrary flag varieties.  These results prove a conjecture of
  Gorbounov and Korff concerning the equivariant quantum $K$-theory of
  Grassmannians of Lie type A.
\end{abstract}

\maketitle

\markboth{A.~BUCH, P.--E.~CHAPUT, L.~MIHALCEA, AND N.~PERRIN}
{A CHEVALLEY FORMULA FOR EQUIVARIANT QUANTUM $K$-THEORY}

\section{Introduction}

Let $X = G/P$ be a flag variety defined by a semisimple complex Lie
group $G$ and a parabolic subgroup $P$.  The (small) {\em equivariant
  quantum $K$-theory ring\/} $\QK_T(X)$ of Givental
\cite{givental:wdvv} is a common generalization of the main cohomology
theories considered in Schubert calculus, including $K$-theory,
equivariant cohomology, and quantum cohomology.  Equivariant quantum
$K$-theory is the most general theory for which the associated
Schubert structure constants have positivity properties that are
either known \cite{graham:positivity, mihalcea:positivity,
  brion:positivity, anderson.griffeth.ea:positivity,
  anderson.chen:positivity} or conjectured \cite{lenart.maeno:quantum,
  lenart.postnikov:affine, buch.mihalcea:quantum}.  In this paper we
prove a Chevalley formula that combinatorially determines the ring
$\QK_T(X)$ when $X$ is a {\em cominuscule variety}, that is, a
Grassmann variety $\Gr(m,n)$ of type A, a Lagrangian Grassmannian
$\LG(n,2n)$, a maximal orthogonal Grassmannian $\OG(n,2n)$, a quadric
hypersurface $Q^n$, or one of two exceptional varieties called the
Cayley plane $E_6/P_6$ and the Freudenthal variety $E_7/P_7$.

The Schubert structure constants of the ring $\QK_T(X)$ are defined as
polynomial expressions in the (equivariant, $K$-theoretic) {\em
  Gromov-Witten invariants\/} of $X$.  The non-equivariant
Gromov-Witten invariants used to define the quantum $K$-theory ring
$\QK(X)$ can be computed for some spaces using the reconstruction
theorem of Lee and Pandharipande
\cite{lee.pandharipande:reconstruction} together with the
$K$-theoretic $J$-function of Taipale \cite{taipale:k-theoretic}.
However, infinitely many Gromov-Witten invariants are required to
evaluate the product of two Schubert classes in $\QK(X)$, and as a
result we do not know much about the structure of this ring for
general flag varieties.  Iritani, Milanov, and Tonita have recently
used an enhanced reconstruction theorem to compute the quantum
$K$-theory of the variety $\SL(3)/B$ of complete flags in $\C^3$
\cite{iritani.milanov.ea:reconstruction}.

When $X$ is a Grassmann variety of type A, the non-equivariant ring
$\QK(X)$ is determined by a Pieri formula of Buch and Mihalcea for
products that involve special Schubert classes
\cite{buch.mihalcea:quantum}.  More generally, when the Picard group
of $X$ has rank one, it has been proved by the authors of the present
paper that all products of Schubert classes in the equivariant ring
$\QK_T(X)$ contain only finitely many non-zero terms
\cite{buch.chaput.ea:finiteness, buch.chaput.ea:rational}.  As a
consequence, the multiplication table of $\QK_T(X)$ can be determined
from finitely many Gromov-Witten invariants.

Our main result is a type-uniform Chevalley formula that describes the
product of an arbitrary Schubert class with the divisor class in
$\QK_T(X)$ when $X$ is any cominuscule variety.  The Schubert classes
on a cominuscule variety can be indexed by diagrams of boxes that
generalize the Young diagrams known from the Schubert calculus of
classical Grassmannians.  Our Chevalley formula can be formulated in
terms of adding or removing boxes from these diagrams, and can
equivalently be formulated in terms of operations on Weyl group
elements.  Chevalley formulas have previously been proved for the
equivariant $K$-theory ring $K^T(X)$ by Lenart and Postnikov
\cite{lenart.postnikov:affine}, and for the equivariant quantum
cohomology ring $\QH_T(X)$ by Mihalcea \cite{mihalcea:equivariant*1};
these formulas are valid for arbitrary flag varieties.

The Gromov-Witten invariants of any cominuscule variety $X$ can be
computed using the `{\em quantum equals classical\/}' theorem
\cite{buch.kresch.ea:gromov-witten, chaput.manivel.ea:quantum*1,
  buch.mihalcea:quantum, chaput.perrin:rationality}, which states that
any (3 point, genus zero) Gromov-Witten invariant of $X$ is equal to a
$K$-theoretic triple intersection on a related space.  When $X$ is a
Grassmannian of type A, this related space is a two-step partial flag
variety of kernel-span pairs \cite{buch:quantum}.  Our Chevalley
formula is proved using an alternative version of the quantum equals
classical theorem, which expresses the Gromov-Witten invariants of $X$
in terms of {\em projected Gromov-Witten varieties\/}
\cite{knutson.lam.ea:positroid, buch.chaput.ea:projected}.  Here a
projected Gromov-Witten variety is the closure of the union of all
rational curves of a fixed degree that pass through two opposite
Schubert varieties in $X$.  Our proof shows that the quantum terms in
the Chevalley formula can be obtained by applying a linear operator to
the classical terms.  We also provide an alternative geometric proof
of Lenart and Postnikov's Chevalley formula
\cite{lenart.postnikov:affine}, specialized to the equivariant
$K$-theory of cominuscule varieties.  This proof is type-uniform for
all minuscule varieties, but requires specialized arguments for
Lagrangian Grassmannians and quadrics of odd dimension.  It would be
interesting to investigate whether similar geometric arguments extend
beyond the case of cominuscule flag varieties.

Gorbounov and Korff have recently used ideas from integrable systems
to define an algebra $qh^*(\Gr(m,n))$ that they conjecture is
isomorphic to the equivariant quantum $K$-theory ring
$\QK_T(\Gr(m,n))$ of a Grassmannian of type A
\cite{gorbounov.korff:quantum}.  In our last section we use
Molev-Sagan equations \cite{molev.sagan:littlewood-richardson,
  knutson.tao:puzzles} to prove, for any flag variety $X$, that the
structure of the ring $\QK_T(X)$, including its Schubert structure
constants and the underlying Gromov-Witten invariants of $X$, is
uniquely determined by products involving divisors.  Since Gorbounov
and Korff prove that that their algebra $qh^*(\Gr(m,n))$ satisfies the
same Chevalley formula as the one proved for equivariant quantum
$K$-theory in this paper, we obtain a proof of their conjecture.  In
particular, a presentation and a Giambelli formula from
\cite{gorbounov.korff:quantum} for the algebra $qh^*(\Gr(m,n))$ are
also valid in equivariant quantum $K$-theory.

Our paper is organized as follows.  In \Section{qk} we recall
the definition of quantum $K$-theory for arbitrary flag varieties.
\Section{cominuscule} covers quantum $K$-theory of cominuscule
varieties and proves the quantum part of our Chevalley formula.  Two
equivalent versions of this formula are given in \Theorem{qktchev} and
\Corollary{chev_const}, which are followed by examples.
\Section{chevproof} contains our geometric proof of the Chevalley
formula for the equivariant $K$-theory of cominuscule varieties.
Finally, \Section{ms} proves that the equivariant quantum $K$-theory
of any flag variety is uniquely determined by products with divisor
classes, see \Proposition{unique_sol_qkt}.

This project was started while the authors visited the University of
Copenhagen during the Summer of 2014.  We thank the Mathematics
Department in Copenhagen for their hospitality and for providing a
friendly and stimulating environment.  We also thank Ionu{\c t}
Ciocan-Fontanine for bringing to our attention
\cite[Lemma~4.1.3]{ciocan-fontanine.kim.ea:abeliannonabelian} and an
anonymous referee for valuable suggestions.

\section{Quantum $K$-theory of flag varieties}
\label{sec:qk}

\subsection{Equivariant $K$-theory}

All algebraic varieties and schemes in this paper will be defined over
the complex numbers $\C$.  Given a variety $X$ with an action of an
algebraic torus $T = (\C^*)^n$, we let $K_T(X)$ denote the
Grothendieck group of $T$-equivariant coherent $\cO_X$-modules.  This
group is a module over the Grothendieck ring $K^T(X)$ of
$T$-equivariant vector bundles over $X$.  If $X$ is non-singular, then
the implicit map $K^T(X) \to K_T(X)$ that sends a vector bundle to its
sheaf of sections is an isomorphism.  Our main references for
equivariant $K$-theory are
\cite[Ch.~5]{chriss.ginzburg:representation} and \cite[\S
15.1]{fulton:intersection}.  A shorter summary of the main properties
can also be found in \cite[\S 3]{buch.mihalcea:quantum}.  Pullback
along the structure morphism $X \to \{\pt\}$ gives $K^T(X)$ a
structure of algebra over the ring $\Gamma = K^T(\pt)$.  This ring
$\Gamma$ is simply the ring of virtual representations of $T$.  Given
any character $\al : T \to \C^*$ we let $\C_\al$ denote the
one-dimensional representation of $T$ defined by $t.z = \al(t)z$ for
$t \in T$ and $z \in \C_\al$.  The ring $\Gamma$ has a $\Z$-basis
consisting of the classes $[\C_\al]$ of these representations.  When
$X$ is projective we let $\euler{X} : K_T(X) \to \Gamma$ denote the
pushforward along the structure morphism of $X$.  This is a
homomorphism of $\Gamma$-modules by the projection formula.

\subsection{Schubert varieties}

Let $X = G/P$ be a flag variety defined by a complex semisimple linear
algebraic group $G$ and a parabolic subgroup $P$.  Fix a maximal torus
$T$ and a Borel subgroup $B$ such that
$T \subset B \subset P \subset G$, and let $B^- \subset G$ be the
opposite Borel subgroup defined by $B \cap B^- = T$.  Let
$W = N_G(T)/T$ be the Weyl group of $G$, $W_P = N_P(T)/T$ the Weyl
group of $P$, and let $W^P \subset W$ be the set of minimal
representatives of the cosets in $W/W_P$.  Each element $w \in W$
defines a $B$-stable Schubert variety $X_w = \ov{Bw.P}$ and an
(opposite) $B^-$-stable Schubert variety $X^w = \ov{B^-w.P}$ in $X$.
When $w \in W^P$ is a minimal representative we have
$\dim(X_w) = \codim(X^w,X) = \ell(w)$, where $\ell(w)$ denotes the
length of $w$.  A product $w = u_1 u_2 \cdot \ldots \cdot u_m$ of
elements in $W$ will be called {\em reduced\/} if
$\ell(w) = \sum \ell(u_i)$.  This implies that any consecutive
subproduct $u_i u_{i+1} \cdot \ldots \cdot u_j$ is also reduced.  The
dual element of $u \in W^P$ is $u^\vee = w_0 u w_P$, where $w_0$ is
the longest element in $W$ and $w_P$ is the longest element in $W_P$.
Notice that $1^\vee$ is the longest element in $W^P$.  Let $\Phi$ be
the root system of $(G,T)$, interpreted as a set of characters of $T$.
Let $\Phi^+$ be the set of positive roots determined by $B$, let
$\Delta \subset \Phi^+$ be the simple roots, and let
$\Delta_P \subset \Delta$ be the set of simple roots $\be$ for which
the associated reflection $s_\be$ belongs to $W_P$.  We need the
following topological fact.

\begin{lemma}\label{lemma:tstable}
  Let $Z \subset X$ be a $T$-stable closed subvariety and let
  $x_0 \in X^T$ be a $T$-fixed point such that
  $Z \cap B.x_0 \neq \emptyset$.  Then $x_0 \in Z$.
\end{lemma}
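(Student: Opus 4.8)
The plan is to contract the $B$-orbit through $x_0$ onto $x_0$ itself by a suitably chosen one-parameter subgroup of $T$, staying inside $Z$ the whole time.

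First I would record a concrete description of the orbit $B.x_0$. Since $x_0 \in X^T$, we may write $x_0 = w.P$ for some $w \in W$ (using a representative in $N_G(T)$). Because $w$ normalizes $T$ and $T \subset P$, we get $BwP = UTwP = UwTP = UwP$, where $U$ is the unipotent radical of $B$; hence $B.x_0 = U.x_0$. In particular, any point $z \in Z \cap B.x_0$ can be written $z = u.x_0$ with $u \in U$.

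Next I would pick a cocharacter $\lambda \in \Hom(\C^*, T)$ with $\langle \alpha, \lambda \rangle > 0$ for every $\alpha \in \Phi^+$ --- for instance the one corresponding to the sum of all positive coroots. For $s \in \C^*$ we then have
\[
  \lambda(s).z \;=\; \bigl(\lambda(s)\,u\,\lambda(s)^{-1}\bigr).\bigl(\lambda(s).x_0\bigr) \;=\; \bigl(\lambda(s)\,u\,\lambda(s)^{-1}\bigr).x_0 ,
\]
since $\lambda(s) \in T$ fixes $x_0$. By the Chevalley commutator relations, conjugation by $\lambda(s)$ multiplies the coordinate of $U$ along the root subgroup $U_\alpha$ by $\alpha(\lambda(s)) = s^{\langle\alpha,\lambda\rangle}$, and every exponent here is positive; so $s \mapsto \lambda(s)\,u\,\lambda(s)^{-1}$ extends to a morphism $\C \to U$ sending $0$ to the identity. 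Composing with the orbit map $U \to X$, $u' \mapsto u'.x_0$, yields a morphism $f : \C \to X$ with $f(s) = \lambda(s).z$ for $s \neq 0$ and $f(0) = x_0$.

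Finally, since $Z$ is $T$-stable and contains $z$, it contains $\lambda(s).z = f(s)$ for every $s \in \C^*$; as $Z$ is closed and $\C^*$ is dense in $\C$, we conclude $x_0 = f(0) \in Z$. The only points requiring care are the identity $B.x_0 = U.x_0$ and the claim that the contracting family $\lambda(s).z$ extends over $s=0$ to $x_0$ rather than diverging; both follow from the standard structure theory of the Borel subgroup via the Chevalley commutator formula, so I do not anticipate a serious obstacle.
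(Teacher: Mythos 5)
Your proof is correct and is essentially the same argument as the paper's: both contract $z \in Z \cap B.x_0$ onto $x_0$ along a cocharacter of $T$ pairing positively with all positive roots, using that conjugation by $\lambda(s)$ sends the unipotent part to the identity as $s \to 0$, and then conclude by $T$-stability and closedness of $Z$. The only cosmetic difference is that you first reduce $B.x_0$ to $U.x_0$ and work with root-subgroup coordinates, whereas the paper handles a general $b \in B$ by noting the limit of $\phi(s)b\phi(s)^{-1}$ lies in $T$, which fixes $x_0$.
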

\begin{proof}
  Choose any group homomorphism $\phi : \C^* \to T$ such that, for
  each $\al \in \Phi^+$ the composition $\al\, \phi : \C^* \to \C^*$
  is given by $\al(\phi(s)) = s^m$ for some $m > 0$.  Let
  $U_\al \subset G$ be the $T$-stable subgroup with
  $\Lie(U_\al) = \Lie(G)_\al$.  It follows from
  \cite[Thm.~26.3]{humphreys:linear} that
  $\lim_{s \to 0} \phi(s) b \phi(s)^{-1} = 1$ for any element
  $b \in U_\al$, $\al \in \Phi^+$.  Since $B$ is generated by $T$ and
  the subgroups $U_\al$ for $\al \in \Phi^+$, we deduce that
  $\lim_{s \to 0} \phi(s) b \phi(s)^{-1} \in T$ for any $b \in B$.
  Now choose $b \in B$ such that $b.x_0 \in Z$.  We then obtain
  $x_0 = \left(\lim_{s \to 0} \phi(s) b \phi(s)^{-1} \right).x_0 =
  \lim_{s \to 0} \phi(s) b.x_0 \in Z$, as required.
\end{proof}

\subsection{Schubert structure constants}

The equivariant $K$-theory ring $K^T(X)$ of the flag variety $X$ has a
basis over $\Gamma$ consisting of the (opposite) {\em Schubert
  classes\/} $\cO^w = [\cO_{X^w}]$ for $w \in W^P$.  The classes
$\cO_w = [\cO_{X_w}]$ form an alternative basis.  The {\em Schubert
  structure constants\/} of $K^T(X)$ are the classes
$N^{w,0}_{u,v} \in \Gamma$ defined for $u,v,w \in W^P$ by the identity
\[
\cO^u \cdot \cO^v = \sum_w N^{w,0}_{u,v}\, \cO^w \,.
\]
Let $\cO_w^\vee \in K^T(X)$ denote the basis element dual to $\cO^w$,
defined by $\euler{X}(\cO^u \cdot \cO^\vee_w) = \delta_{u,w}$ for
$u,w \in W^P$.  We then have
$N^{w,0}_{u,v} = \euler{X}(\cO^u \cdot \cO^v \cdot \cO^\vee_w)$.  The
{\em boundary\/} of the Schubert variety $X_w$ is the closed
subvariety defined by $\partial X_w = X_w \ssm Bw.P$.  Brion has
proved the identity
\begin{equation}\label{eqn:brion_dual}
  \cO_w^\vee = [I_{\partial X_w}] \,,
\end{equation}
where $I_{\partial X_w} \subset \cO_{X_w}$ denotes the ideal sheaf of
this boundary \cite{brion:positivity}.

Calculations in the ring $K^T(X)$ may be carried out by utilizing that
restriction to the set of $T$-fixed points
$X^T = \{ w.P \mid w \in W^P \}$ provides an injective ring
homomorphism $K^T(X) \to K^T(X^T) = \prod_{w \in W^P} \Gamma$
\cite{kostant.kumar:t-equivariant} (see also
\cite{goresky.kottwitz.ea:equivariant}).  The images of Schubert
classes under this map are given by the restriction formulas in
\cite{andersen.jantzen.ea:representations, billey:kostant,
  graham:equivariant, willems:k-theorie} (see also
\cite{knutson:schubert}).  The ring structure of $K^T(X)$ is also
determined by the Chevalley formula of Lenart and Postnikov
\cite{lenart.postnikov:affine}, which provides an explicit expression
for the product of any Schubert class with a divisor in $K^T(X)$.  A
version of this formula for cominuscule varieties will be discussed in
\Section{chevalley}.  Other useful formulas for structure constants in
various special cases can be found in e.g.\
\cite{buch.samuel:k-theory, knutson:puzzles, buch:mutations,
  pechenik.yong:equivariant} and the references therein.

\subsection{Quantum $K$-theory}

A homology class $d = \sum d_\be [X_{s_\be}] \in H_2(X;\Z)$, with the
sum over $\be \in \Delta \ssm \Delta_P$, is called an {\em effective
  degree\/} if $d_\be \geq 0$ for each $\be$.  For $d,e \in H_2(X;\Z)$
we write $e \leq d$ if and only if $d-e$ is effective.  For any
effective degree $d$ we let $\Mb_{0,n}(X,d)$ denote the Kontsevich
moduli space of $n$-pointed stable maps to $X$ of genus zero and
degree $d$ \cite{fulton.pandharipande:notes}.  This space is equipped
with evaluation maps $\ev_i : \Mb_{0,n}(X,d) \to X$ for
$1 \leq i \leq n$.  Given classes
$\sigma_1, \sigma_2, \dots, \sigma_n \in K^T(X)$, define a
corresponding (equivariant $K$-theoretic) Gromov-Witten invariant of
$X$ by
\[
I_d(\sigma_1,\sigma_2,\dots,\sigma_n) \ = \
\euler{\Mb_{0,n}(X,d)}\left( \ev_1^*(\sigma_1) \cdot \ev_2^*(\sigma_2)
  \cdot \ldots \cdot \ev_n^*(\sigma_n)\right) \ \in \Gamma \,.
\]
This invariant is $\Gamma$-linear in each argument $\sigma_i$.

The (small) $T$-equivariant quantum $K$-theory ring of $X$ is an
algebra $\QK_T(X)$ over the ring of formal power series
$\Gq = \Gamma\llbracket q_\be : \be \in \Delta \ssm
\Delta_P\rrbracket$,
which has one variable $q_\be$ for each simple root
$\be \in \Delta \ssm \Delta_P$.  As a module over $\Gq$ we have
$\QK_T(X) = K^T(X) \otimes_\Gamma \Gq$.  In other words, $\QK_T(X)$ is
a free module over $\Gq$ with a basis consisting of the Schubert
classes $\cO^w$ for $w \in W^P$.  For $d = \sum d_\be [X_{s_\be}]$ we
write $q^d = \prod_\be q_\be^{d_\be}$.  Multiplication in $\QK_T(X)$
is defined by
\[
\cO^u \star \cO^v = \sum_{w,d \geq 0} N^{w,d}_{u,v}\, q^d\, \cO^w
\]
where the sum is over all effective degrees $d$ and $w \in W^P$.  The
structure constants $N^{w,d}_{u,v}$ are defined recursively by
\begin{equation}\label{eqn:Nuvwd}
  N^{w,d}_{u,v} \ = \ I_d(\cO^u, \cO^v, \cO_w^\vee) \ - \
  \sum_{\kappa,\,0<e\leq d} N^{\kappa,d-e}_{u,v}\, I_e(\cO^\kappa,\cO_w^\vee)
\end{equation}
where this sum is over all $\kappa \in W^P$ and degrees $e$ for which
$0 < e \leq d$.  A theorem of Givental states that this defines an
associative product \cite{givental:wdvv} (see also
\cite{witten:verlinde, ruan.tian:mathematical,
  kontsevich.manin:gromov-witten, lee:quantum*1}).

The ring $\QK_T(X)$ is a formal deformation of the equivariant
$K$-theory ring $K^T(X)$.  It is best understood when $X$ is a
cominuscule variety.  This case will be discussed in
\Section{cominuscule}.  When the Picard group of $X$ has rank one, it
has been proved in \cite{buch.chaput.ea:finiteness,
  buch.chaput.ea:rational} that the product $\cO^u \star \cO^v$
contains only finitely many non-zero terms.  It is an open question if
this holds for general flag varieties.  Combinatorial models for the
equivariant quantum $K$-theory of complete flag manifolds $G/B$ have
been defined by Lenart and Maeno \cite{lenart.maeno:quantum} and by
Lenart and Postnikov \cite{lenart.postnikov:affine}.  These models are
correct for $\bP^1$ \cite{buch.mihalcea:quantum} and consistent with
the computation of the quantum $K$-theory of $\SL(3)/B$
\cite{iritani.milanov.ea:reconstruction}, but for all other spaces it
is an open problem to prove that the models agree with the geometric
definition of $\QK_T(X)$.  In addition, the lack of functoriality of
quantum $K$-theory makes it unclear how these models relate to
cominuscule varieties.  Another model for the equivariant quantum
$K$-theory of Grassmannians of type A has been conjectured by
Gorbounov and Korff \cite{gorbounov.korff:quantum}.  This model is a
consequence of the results proved in this paper (see \Remark{gk}).
Finally it is expected that the structure constants of $\QK_T(X)$
satisfy the following positivity property \cite{griffeth.ram:affine,
  lenart.maeno:quantum, lenart.postnikov:affine,
  buch.mihalcea:quantum}.

\begin{conj}
  Let $u,v,w \in W^P$, let $d \in H_2(X;\Z)$ be an effective degree,
  and set $s = \ell(u)+\ell(v)+\ell(w)+\int_d c_1(T_X)$.  Then we have
  \[
  (-1)^s\, N^{w,d}_{u,v} \,\in\, \N\big[\, [\C_{-\be}]-1 : \be \in
  \Delta \,\big] \,.
  \]
\end{conj}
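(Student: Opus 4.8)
The plan is to prove the conjecture in the cominuscule case, where the Chevalley formula established above and the geometry behind it are available; for general flag varieties the statement seems out of reach with present techniques. Indeed, \Proposition{unique_sol_qkt} reconstructs $\QK_T(X)$ from products with divisor classes, but via Molev--Sagan type recursions involving subtraction, through which positivity need not propagate, and no positivity is known even for the divisor products themselves outside the cominuscule case. So I restrict to $X$ cominuscule. There the divisor case ($\ell(u)=1$, or $\ell(v)=1$) should be immediate by inspection of the Chevalley formula \Theorem{qktchev} (equivalently \Corollary{chev_const}); the real issue is the general structure constant.

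\textbf{A manifestly positive geometric model.} The raw Gromov--Witten invariant $I_d(\cO^u,\cO^v,\cO_w^\vee)$ has delicate sign behaviour, and the Givental recursion \eqn{Nuvwd} introduces further subtractions, so I would not argue from \eqn{Nuvwd} directly. Instead, for cominuscule $X = G/P$ and an effective degree $d$ I would use the `quantum equals classical' description of Gromov--Witten invariants via projected Gromov--Witten varieties \cite{buch.chaput.ea:projected, knutson.lam.ea:positroid} --- the two-step flag variety of kernel--span pairs in type A and its cominuscule analogues --- to produce a smooth projective $T$-variety $Y_d$, evaluation morphisms $p,q\colon Z_d\to X$ from a curve-incidence space, and a proper $T$-equivariant morphism $\pi\colon Z_d\to Y_d$, so that each structure constant is realised as a single $K$-theoretic triple intersection of opposite Schubert-type classes on $Y_d$,
\[
N^{w,d}_{u,v}\;=\;\euler{Y_d}\bigl(\pi_{*}(p^{*}\cO^u\cdot q^{*}\cO^v)\cdot\cO^{\vee}_{w,d}\bigr)\,,
\]
where $\cO^{\vee}_{w,d}$ is an appropriate class on $Y_d$ representing $\cO_w^\vee$ --- crucially, with the lower-degree corrections of \eqn{Nuvwd} absorbed geometrically rather than appearing as signed cancellations. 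In type A this is essentially the structure underlying Buch--Mihalcea's Littlewood--Richardson analysis of $\QK(\Gr(m,n))$ \cite{buch:quantum, buch.mihalcea:quantum}; the finiteness theorems \cite{buch.chaput.ea:finiteness, buch.chaput.ea:rational} guarantee the model involves only finitely many terms, and I would combine them with the present paper's identification of quantum divisor multiplication as a linear operator on classical terms to pin $Y_d$ down type-uniformly.

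\textbf{Positivity and sign bookkeeping.} Schubert and opposite Schubert varieties on $X$, and their preimages in $Z_d$, are $T$-stable; since $\pi$ is proper and $T$-equivariant, the pushforward of the structure sheaf of such a $T$-stable subvariety of $Z_d$ expands in the opposite Schubert basis of $Y_d$ with coefficients in $(-1)^{\codim}\,\N\big[\,[\C_{-\be}]-1:\be\in\Delta\,\big]$, by Brion's equivariant positivity theorem \cite{brion:positivity} (equivalently \cite{anderson.griffeth.ea:positivity}). This cone is closed under multiplication, so multiplying by a further dual Schubert class and pushing to a point keeps the result in it, up to the sign $(-1)^{s'}$ dictated by the codimensions involved. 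It then remains to check $s'=s$. For $d=0$ this is exactly Brion's sign, since $(-1)^{\ell(w)-\ell(u)-\ell(v)}=(-1)^{\ell(u)+\ell(v)+\ell(w)}$ and $\ell(u)=\codim(X^u,X)$ and likewise for $v,w$; for $d>0$ the integer $\int_d c_1(T_X)$ equals, in the cominuscule models, the excess dimension $\dim Y_d - \dim X$ of the degree-$d$ curve space over $X$, which is precisely what promotes $\codim(X^u,X)+\codim(X^v,X)+\codim(X^w,X)$ to $\dim Y_d$ and hence $(-1)^{s'}$ to $(-1)^{s}$.

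\textbf{Main obstacle.} The crux is the first step: one must show that the \emph{corrected} constant $N^{w,d}_{u,v}$ --- and not merely the raw invariant $I_d(\cO^u,\cO^v,\cO_w^\vee)$, whose sign behaviour is already subtle --- is computed by a pushforward of an honest Schubert product on the right auxiliary variety, so that every minus sign in \eqn{Nuvwd} is accounted for geometrically. Pinning down $Y_d$ and the morphisms uniformly across all cominuscule types --- including Lagrangian Grassmannians $\LG(n,2n)$ and quadrics of odd dimension, which already require separate treatment in the paper's geometric Chevalley proof --- and proving that quantum multiplication literally becomes this pushforward, is the hard part. Once that identification is in hand, the positivity input \cite{brion:positivity} and the dimension count are routine.
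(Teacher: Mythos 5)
The statement you are asked about is a \emph{conjecture} in the paper: the authors explicitly present it as an expected positivity property, prove nothing about it, and note that only the $d=0$ case is known (Anderson--Griffeth--Miller for $K^T(X)$, Brion's sign alternation non-equivariantly). So there is no proof in the paper to compare with, and your text would have to stand as a complete argument on its own. It does not: it is a research plan whose decisive step you yourself flag as unresolved. Two gaps are fatal as written. First, the conjecture is stated for arbitrary flag varieties $G/P$, while you restrict at the outset to cominuscule $X$; even a complete cominuscule argument would not prove the statement. Second, and more importantly, the existence of a smooth $T$-variety $Y_d$ with maps $p,q,\pi$ such that the \emph{corrected} constant $N^{w,d}_{u,v}$ (rather than the raw invariant $I_d(\cO^u,\cO^v,\cO_w^\vee)$) is a single equivariant triple intersection, with the subtractions of \eqn{Nuvwd} ``absorbed geometrically,'' is precisely the open problem. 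The quantum-equals-classical results actually available (\Theorem{qclas}, the kernel--span space in type A) compute only the invariants $I_d$; the paper's own passage from these to the structure constants goes through the operator $1-q\psi$ (\Proposition{comin_prod}), i.e.\ a genuine difference of classes of projected Gromov--Witten varieties, and it is exactly the sign-coherence of that difference that nobody knows how to control. Asserting that such a $Y_d$ can be ``pinned down type-uniformly'' is not an argument.

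There is also a problem with the positivity input you invoke. Brion's equivariant statement and the Anderson--Griffeth--Miller theorem give the predicted signs for \emph{structure constants}, i.e.\ for products of Schubert classes in $K^T(G/P)$; they do not assert that the push-forward under an arbitrary proper $T$-equivariant morphism of the structure sheaf of an arbitrary $T$-stable subvariety expands in the opposite Schubert basis with coefficients in $(-1)^{\codim}\,\N\big[\,[\C_{-\be}]-1 : \be\in\Delta\,\big]$. Brion's general result about classes of subvarieties is non-equivariant, and even there one needs control (e.g.\ rational singularities and degeneration arguments) that your sketch does not supply for the spaces $Z_d$, $Y_d$ you postulate. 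Finally, the sign bookkeeping via ``$\int_d c_1(T_X)$ equals the excess dimension of $Y_d$ over $X$'' is stated without proof and is not meaningful until $Y_d$ is actually constructed. In short: the conjecture remains open, and your proposal, while it correctly identifies where the difficulty lies, does not close it.
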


In other words, the structure constant $N^{w,d}_{u,v}$ can be written,
up to a sign, as a polynomial with non-negative integer coefficients
in the classes $[\C_{-\be}]-1$ for $\be \in \Delta$.  This has been
proved for the structure constants $N^{w,0}_{u,v}$ of $K^T(X)$ by
Anderson, Griffeth, and Miller \cite{anderson.griffeth.ea:positivity}.
Earlier it was proved by Brion \cite{brion:positivity} that the
structure constants of the ordinary $K$-theory ring $K(X)$ have signs
that alternate with codimension.

\subsection{Curve neighborhoods}\label{sec:nbhd}

Given a closed subvariety $\Omega \subset X$ and an effective degree
$d \in H_2(X;\Z)$, we let $\Gamma_d(\Omega)$ denote the closure of the
union of all rational curves of degree $d$ in $X$ that pass through
$\Omega$.  Equivalently we have
$\Gamma_d(\Omega) = \ev_2(\ev_1^{-1}(\Omega))$, where
$\ev_1, \ev_2 : \Mb_{0,2}(X,d) \to X$ are the evaluation maps.  It was
proved in \cite{buch.chaput.ea:finiteness} that $\Gamma_d(\Omega)$ is
irreducible whenever $\Omega$ is irreducible.  In particular, if
$\Omega$ is a $B$-stable Schubert variety in $X$ then so is
$\Gamma_d(\Omega)$.  Given $w \in W^P$ we may therefore define related
Weyl group elements $w(d)$ and $w(-d)$ in $W^P$ by the identities
$\Gamma_d(X_w) = X_{w(d)}$ and $\Gamma_d(X^w) = X^{w(-d)}$.  We have
$w(-d) \leq w \leq w(d)$ in the Bruhat order of $W^P$, and since $X^w$
is a translate of $X_{w^\vee}$, we obtain $w(-d)^\vee = w^\vee(d)$.  A
combinatorial description of $w(d)$ and $w(-d)$ can be found in
\cite{buch.chaput.ea:finiteness} or \Section{comin_nbhd} when $X$ is
cominuscule and in \cite{buch.mihalcea:curve} for general flag
varieties.

It was proved in \cite{buch.chaput.ea:finiteness} that
$I_d(\cO^w) = 1$, i.e.\ any single-pointed Gromov-Witten invariant of
a Schubert class is equal to one.  More generally, it follows from
\cite[Prop.~3.2]{buch.chaput.ea:finiteness} that two-pointed
Gromov-Witten invariants are given by the identity
\[
I_d(\cO^u, \cO_w^\vee) \ = \ \euler{X}(\cO^{u(-d)} \cdot \cO_w^\vee) \
= \ \delta_{u(-d),w} \,.
\]
As a consequence, the definition \eqn{Nuvwd} of the structure
constants of $\QK_T(X)$ is equivalent to the identity
\begin{equation}\label{eqn:N2gw}
  I_d(\cO^u, \cO^v, \cO_w^\vee) \ = \
  \sum_{0 \leq e \leq d} \sum_{\ \kappa :\, \kappa(-e)=w} N^{\kappa,d-e}_{u,v} \,.
\end{equation}

For any classes $\sigma_1, \sigma_2 \in K^T(X)$ we define the power
series
\[
\sigma_1 \odot \sigma_2 \ = \ \sum_{w, d \geq 0} I_d(\sigma_1,
\sigma_2, \cO_w^\vee)\, q^d\, \cO^w
\]
in $\QK_T(X)$.  We also define an automorphism
$\Psi : \QK_T(X) \to \QK_T(X)$ of $\Gq$-modules by
\[
\Psi(\cO^w) \ = \ \sum_{e \geq 0}\, q^e\, \cO^{w(-e)} \,.
\]
Equation \eqn{N2gw} is equivalent to the following statement.

\begin{prop}\label{prop:Psi}
  For $\sigma_1, \sigma_2 \in K^T(X)$ we have
  $\sigma_1 \odot \sigma_2 = \Psi(\sigma_1 \star \sigma_2)$ in
  $\QK_T(X)$.
\end{prop}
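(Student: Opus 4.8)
The statement is an identity between two power series in $\QK_T(X)$, so the plan is to compare coefficients of $q^d\,\cO^w$ on both sides for each effective degree $d$ and each $w \in W^P$. On the left-hand side, $\sigma_1 \odot \sigma_2$ has $q^d\,\cO^w$-coefficient equal to $I_d(\sigma_1,\sigma_2,\cO_w^\vee)$ by definition. On the right-hand side I would first expand $\sigma_1 \star \sigma_2 = \sum_{\kappa,\,e \geq 0} N^{\kappa,e}_{\sigma_1,\sigma_2}\, q^e\, \cO^\kappa$, where I write $N^{\kappa,e}_{\sigma_1,\sigma_2}$ for the $\Gamma$-linear extension of the structure constants (legitimate since both $\odot$ and $\star$ are $\Gamma$-bilinear, so it suffices to treat $\sigma_1 = \cO^u$, $\sigma_2 = \cO^v$). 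Then apply $\Psi$ term by term: $\Psi(q^e\,\cO^\kappa) = \sum_{f \geq 0} q^{e+f}\,\cO^{\kappa(-f)}$. Collecting the coefficient of $q^d\,\cO^w$ means summing $N^{\kappa,e}_{\sigma_1,\sigma_2}$ over all triples $(\kappa,e,f)$ with $e+f = d$ and $\kappa(-f) = w$; reindexing with $f = d-e$, this is exactly $\sum_{0 \leq e \leq d}\sum_{\kappa:\,\kappa(-e')=w,\ e'=d-e} N^{\kappa,e}_{\sigma_1,\sigma_2}$, which after renaming matches the right-hand side of \eqn{N2gw}.

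So the whole proposition reduces to the assertion, already recorded in the excerpt, that \eqn{N2gw} holds — equivalently that the recursive definition \eqn{Nuvwd} of the $N^{w,d}_{u,v}$ is equivalent to \eqn{N2gw}. For completeness I would briefly indicate why: substitute the two-pointed formula $I_e(\cO^\kappa,\cO_w^\vee) = \delta_{\kappa(-e),w}$ into \eqn{Nuvwd}, which turns the correction sum into $\sum_{0 < e \leq d}\sum_{\kappa:\,\kappa(-e)=w} N^{\kappa,d-e}_{u,v}$; moving this to the other side and noting that the $e=0$ term on the right of \eqn{N2gw} is just $N^{w,d}_{u,v}$ itself (since $\kappa(0) = \kappa$, so the only $\kappa$ with $\kappa(0)=w$ is $\kappa=w$) gives precisely \eqn{N2gw}. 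Conversely \eqn{N2gw} determines the $N^{w,d}_{u,v}$ recursively in $d$ and reproduces \eqn{Nuvwd}.

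There is essentially no obstacle here: the proposition is a formal bookkeeping consequence of the definitions of $\odot$, $\Psi$, and the structure constants, combined with the two-pointed Gromov-Witten computation $I_e(\cO^u,\cO_w^\vee) = \delta_{u(-e),w}$ from \cite{buch.chaput.ea:finiteness}. The only point requiring a word of care is the interchange of summations and the reindexing $f \leftrightarrow d-e$, which is valid because for fixed $d$ all sums are finite (effective degrees $e \leq d$ form a finite set, and for each the relevant $\kappa \in W^P$ range over a finite set). I would also note that $\Psi$ is indeed a well-defined $\Gq$-module automorphism: it is upper-triangular with respect to the degree filtration with identity leading term, hence invertible; this is what makes the passage between $\odot$ and $\star$ reversible, though only the one direction stated is needed. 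I expect the proof in the paper to be just a few lines along exactly these lines.
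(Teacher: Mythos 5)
Your proof is correct and follows essentially the same route as the paper: the paper simply observes that the proposition is the operator reformulation of equation \eqn{N2gw}, which in turn is \eqn{Nuvwd} rewritten using the two-pointed identity $I_e(\cO^\kappa,\cO_w^\vee)=\delta_{\kappa(-e),w}$, and your coefficient-of-$q^d\,\cO^w$ comparison together with the reindexing makes exactly this equivalence explicit. No gaps; your remarks on finiteness of the sums and on $\Psi$ being invertible are sound but not needed beyond what you already use.
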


\section{Cominuscule varieties}\label{sec:cominuscule}

\subsection{Bruhat order}

A simple root $\ga \in \Delta$ is called {\em cominuscule\/} if, when
the highest root is written as a linear combination of simple roots,
the coefficient of $\ga$ is one.  The flag variety $X = G/P$ is called
cominuscule if $\Delta \ssm \Delta_P$ consists of a single cominuscule
root $\ga$.  If in addition the root system $\Phi$ is simply laced,
then $X$ is also called {\em minuscule}.  This holds for all the
cominuscule varieties listed in the introduction, except for
Lagrangian Grassmannians and quadrics of odd dimension.  We will
assume that $X$ is cominuscule in this section.  In this case it was
proved by Proctor that the Bruhat order on $W^P$ is a distributive
lattice that agrees with the left weak Bruhat order
\cite{proctor:bruhat}.  Stembridge has proved that all elements of
$W^P$ are {\em fully commutative}, which means that any reduced
expression for an element of $W^P$ can be obtained from any other by
interchanging commuting simple reflections.  We proceed to summarize
the facts we need in more detail.  Proofs of our claims can be found
in \cite{proctor:bruhat, stembridge:fully, perrin:small*1,
  buch.samuel:k-theory}.

The root lattice $\Span_\Z(\Delta)$ has a partial order defined by
$\al' \leq \al$ if and only if $\al-\al'$ can be written as a sum of
positive roots.  Let $\cP_X = \{ \al \in \Phi \mid \al \geq \ga \}$ be
the set of positive roots $\al$ for which the coefficient of the
cominuscule root $\ga$ is one, with the induced partial order.  For
any element $u \in W$ we let
$I(u) = \{ \al \in \Phi^+ \mid u.\al < 0 \}$ denote the inversion set
of $u$.  We then have $\ell(u) = |I(u)|$.  The assignment
$u \mapsto I(u)$ restricts to a bijection between the elements of
$W^P$ and the lower order ideals of $\cP_X$.  This assignment is order
preserving in the sense that $u \leq v$ if and only if
$I(u) \subset I(v)$.  Given a lower order ideal $\la \subset \cP_X$,
let $\la = \{ \al_1, \al_2, \dots, \al_{|\la|} \}$ be any ordering of
its elements that is compatible with the partial order $\leq$, i.e.\
$\al_i < \al_j$ implies $i<j$.  Then the element of $W^P$
corresponding to $\la$ is the product of reflections
$w_\la = s_{\al_1} s_{\al_2} \dots s_{\al_{|\la|}}$.  The order ideal
$\la$ is a generalization of the Young diagrams and shifted Young
diagrams known from the Schubert calculus of the classical
Grassmannians.  For this reason the roots in $\cP_X$ will sometimes be
called {\em boxes}, and the order ideal $I(u)$ will be called the {\em
  shape\/} of $u$.

Given two elements $u, w \in W^P$ with $u \leq w$, we will use the
notation $w/u = w u^{-1} \in W$.  Since the Bruhat order on $W^P$
agrees with the left weak Bruhat order, we have
$\ell(w/u) = \ell(w)-\ell(u)$.  For any root $\al \in \cP_X$, consider
the order ideal $\la = \{ \al' \in \cP_X \mid \al' < \al \}$ of roots
that are smaller than $\al$, and set $\delta(\al) = w_\la.\al$.  Then
$s_{\delta(\al)} = w_\la s_\al w_\la^{-1} = w_{\la \cup \al}/w_\la$
has length one.  It follows that $\delta : \cP_X \to \Delta$ is a
labeling of the boxes in $\cP_X$ by simple roots.  In addition, the
element $w/u$ depends only on the {\em skew shape\/} $I(w) \ssm I(u)$.
More precisely, if
$I(w) \ssm I(u) = \{ \al_1, \al_2, \dots, \al_\ell \}$ is any ordering
compatible with $\leq$, then
$w/u = s_{\delta(\al_\ell)} \cdots s_{\delta(\al_2)}
s_{\delta(\al_1)}$
is a reduced expression for $w/u$.  In the special case $u=1$, every
reduced expression for $w$ can be obtained in this way.

We will say that $w/u$ is a {\em rook strip\/} if this element of $W$
is a product of commuting simple reflections.  Equivalently, no pair
of roots in $I(w)\ssm I(u)$ are comparable by the order $\leq$.  We
call $w/u$ a {\em short rook strip\/} if it is a product of commuting
reflections defined by short simple roots, i.e.\ $I(w) \ssm I(u)$
consists of incomparable short roots.  Notice that if the root system
$\Phi$ is simply laced, then all roots are long by convention, so
$w/u$ is a short rook strip if and only if $w=u$.

The Bruhat order on $W^P$ is a distributive lattice with operations
$u \cap v$ and $u \cup v$ for $u, v \in W^P$.  These operations are
defined combinatorially by $I(u \cap v) = I(u) \cap I(v)$ and
$I(u \cup v) = I(u) \cup I(v)$, and geometrically by
$X_{u \cap v} = X_u \cap X_v$ and $X^{u \cup v} = X^u \cap X^v$.
Notice that for any $u, z \in W^P$ we have
$(u \cup z)/z = u/(u \cap z)$.  Since dualization of Weyl group
elements is an order-reversing involution of $W^P$, we also have
$(u \cap v)^\vee = u^\vee \cup v^\vee$ and
$(u \cup v)^\vee = u^\vee \cap v^\vee$.

\subsection{Curve neighborhoods}\label{sec:comin_nbhd}

Since $P$ is a maximal parabolic subgroup of $G$ it follows that
$H_2(X;\Z) = \Z$.  For $u \in W^P$ and $d \in \N$ we let
$u(d), u(-d) \in W^P$ be the unique Weyl group elements for which
$\Gamma_d(X_u) = X_{u(d)}$ and $\Gamma_d(X^u) = X^{u(-d)}$.  Since $X$
is cominuscule, we have $u(d)(d') = u(d+d')$ and
$u(-d)(-d') = u(-d-d')$ for any effective degrees $d, d' \in \N$, see
\cite{chaput.manivel.ea:quantum*1} or
\cite[Lemma~4.2]{buch.chaput.ea:finiteness}.  However, it is easy to
find examples where both $u(d')(-d)$ and $u(-d)(d')$ are different
from $u(d'-d)$.  It was proved in
\cite[Lemma~4.4]{buch.chaput.ea:finiteness} that $u(1)$ is the minimal
representative of the coset $u w_P s_\ga W_P$ whenever $X_u \neq X$.
Equivalently, $u(-1)$ is the minimal representative of $u s_\ga W_P$
whenever $u \neq 1$.

For $d \geq 0$ we let $z_d \in W^P$ be the unique element such that
$X_{z_d} = \Gamma_d(1.P)$.  This element satisfies that $z_d w_P$ is
inverse to itself, where $w_P$ denotes the longest element in $W_P$.
In fact, if we let $C \subset X$ be any stable curve of degree $d$
from $1.P$ to $z_d.P$, then $(z_d w_P)^{-1}.C$ is a curve of the same
degree from $1.P$ to $(z_d w_P)^{-1}.P$, so we obtain
$(z_d w_P)^{-1}.P \in \Gamma_d(1.P)$.  This implies that
$(z_d w_P)^{-1} \leq z_d w_P$ and therefore
$(z_d w_P)^{-1} = z_d w_P$.  It follows that $z_d w_P$ is the maximal
representative of the {\em left\/} coset $W_P z_d w_P$.  If
$X_{z_d} \neq X$, we deduce that $\cP_X \ssm I(z_d)$ contains a unique
minimal box $\al_{d+1}$, and this box satisfies
$\delta(\al_{d+1}) = \ga$.  In other words we have
$I(z_d) = \{ \al \in \cP_X \mid \al \not\geq \al_{d+1} \}$.  Notice
also that since $z_{d+1} = z_d(1)$ is the minimal representative of
the coset $z_d w_P s_\ga W_P$, we may choose $x \in W_P$ such that
$z_{d+1} w_P = z_d w_P s_\ga x^{-1}$ holds as an identity of reduced
products.  This implies that
$z_{d+1} w_P = (z_{d+1} w_P)^{-1} = x s_\ga (z_d w_P)^{-1} = x s_\ga
z_d w_P$,
hence $z_{d+1} = x s_\ga z_d$ expresses $z_{d+1}$ as a reduced
product.  Since every reduced expression of $z_{d+1}$ corresponds to
an ordering of the boxes of $I(z_{d+1})$, this implies that
$\al_{d+1}$ is the only box of $I(z_{d+1}) \ssm I(z_d)$ that is sent
to $\ga$ by the map $\delta$.  In fact, we have
$\delta^{-1}(\ga) = \{ \al_1 < \al_2 < \dots < \al_{d_X(2)} \}$ where
$d_X(2)$ is the smallest degree of a rational curve joining two
general points in $X$.

\begin{lemma}\label{lemma:comin_nbhd}
  For $u \in W^P$ and $d \in \N$ we have
  $u(-d) = u/(u \cap z_d) = (u \cup z_d)/z_d$ and
  $u(d) = (u \cap z_d^\vee)z_d$.
\end{lemma}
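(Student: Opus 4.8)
The three displayed identities reduce to two. The middle equality $u/(u\cap z_d)=(u\cup z_d)/z_d$ is the instance $z=z_d$ of the general distributive‑lattice identity $(u\cup z)/z=u/(u\cap z)$ noted above, which holds because $X$ is cominuscule. So it suffices to prove $u(-d)=(u\cup z_d)/z_d$ and $u(d)=(u\cap z_d^\vee)\,z_d$; and since $u(-d)^\vee=u^\vee(d)$, the second identity will follow from the first applied to $u^\vee$. I concentrate on $u(-d)=(u\cup z_d)/z_d$.

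The plan is induction on $d$, the case $d=0$ being trivial as $z_0=1$. For the inductive step I would use the two facts recalled just before the statement: the additivity $u(-(d{+}1))=\bigl(u(-d)\bigr)(-1)$, and that $u(-1)$ is the minimal representative of the coset $us_\ga W_P$ when $u\neq1$ (and $u(-1)=1$ otherwise). Everything then rests on a combinatorial description of how the shape changes: since $W^P$ is fully commutative, the element $w/v\in W$ for $v\leq w$ in $W^P$ is determined by the $\delta$‑labelled skew shape $I(w)\ssm I(v)$, so it is enough to identify $I\bigl(u(-d)\bigr)$ and to show that it carries the same multiset of $\delta$‑labels, in compatible order, as the skew shape $I(u)\ssm I(z_d)=I(u)\cap\{\al\in\cP_X:\al\geq\al_{d+1}\}$. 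Concretely I would prove: (i) for $u\neq1$, the shape $I\bigl(u(-1)\bigr)$ is obtained from $I(u)$ by deleting $I(u)\cap\{\al\geq\al_2\}$ and sliding it down, i.e.\ $I\bigl(u(-1)\bigr)=\phi_1\bigl(I(u)\cap\{\al\geq\al_2\}\bigr)$, where $\phi_1$ is the canonical order‑ and $\delta$‑preserving isomorphism from the principal filter $\{\al\geq\al_2\}=\cP_X\ssm I(z_1)$ onto the order ideal $I(z_1^\vee)\subset\cP_X$, carrying $\al_{j+1}$ to $\al_j$ along the chain $\delta^{-1}(\ga)$; and (ii) the analogous statement for all $d$, the relevant map $\phi_d$ being the $d$‑fold iterate of such slides, a $\delta$‑preserving isomorphism $\{\al\geq\al_{d+1}\}=\cP_X\ssm I(z_d)\xrightarrow{\ \sim\ }I(z_d^\vee)$; compatibility of $\phi_d$ with $\phi_1$ under $z_{d+1}=z_d(1)$ then makes the induction go through. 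Granting (i)--(ii), the equality $u(-d)=(u\cup z_d)/z_d$ follows in $W$ as explained, and then $u(d)=\bigl((u^\vee\cup z_d)/z_d\bigr)^\vee=(u\cap z_d^\vee)\,z_d$ by rewriting in shapes: dualization is the order‑reversing complement $\iota$ of $\cP_X$, and $I(z_d^\vee)=\operatorname{im}(\phi_d)$, so $(u\cap z_d^\vee)\,z_d$ is a reduced product with shape $I(z_d)\sqcup\phi_d^{-1}\bigl(I(u)\cap I(z_d^\vee)\bigr)$, which one matches against $\iota$ applied to the shape of $(u^\vee\cup z_d)/z_d$.

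The main obstacle is establishing the shift isomorphisms $\phi_d$ uniformly — i.e.\ showing that removing a degree‑$d$ rational curve acts on Schubert shapes as a rigid, $\delta$‑label‑preserving slide; this reflects the self‑similarity of cominuscule posets and can be extracted from Proctor's classification \cite{proctor:bruhat} together with \cite{perrin:small*1, buch.samuel:k-theory}. It is uniform for the minuscule (simply laced) cases, but Lagrangian Grassmannians and odd‑dimensional quadrics must be inspected separately, since there the chain $\delta^{-1}(\ga)$ and the short roots meet the filters $\{\al\geq\al_{d+1}\}$ in a type‑dependent way. Statement (i) — the precise effect on the shape of replacing $u$ by the minimal representative of $us_\ga W_P$ — is the other point requiring a short type‑by‑type check near $\ga=\al_1$; alternatively one may simply invoke the combinatorial formulas for $w(\pm1)$ and for $z_d$ from \cite[\S4]{buch.chaput.ea:finiteness} and reduce the whole lemma to bookkeeping.
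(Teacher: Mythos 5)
Your reduction has the right skeleton (induction on $d$, the additivity $u(-d)=u(1-d)(-1)$, the characterization of $u(-1)$ as the minimal representative of $us_\ga W_P$, and deducing $u(d)=(u\cap z_d^\vee)z_d$ by dualizing the first identity -- this last step is exactly what the paper does). But the core of your argument is never carried out: steps (i) and (ii), i.e.\ that $I\bigl(u(-1)\bigr)$, resp.\ $I\bigl(u(-d)\bigr)$, is obtained from $I(u)\ssm I(z_1)$, resp.\ $I(u)\ssm I(z_d)$, by a rigid, $\delta$-label-preserving slide $\phi_d:\cP_X\ssm I(z_d)\xrightarrow{\ \sim\ }I(z_d^\vee)$, are not lemmas you can treat as bookkeeping -- they \emph{are} the lemma, in shape language (the paper states this description of $w\mapsto w(\pm1)$ only \emph{after} the lemma, as a consequence of it). You acknowledge this as ``the main obstacle'' and propose to settle it by appealing to Proctor's classification plus a type-by-type inspection, or by ``invoking the combinatorial formulas for $w(\pm1)$ and $z_d$'' from the earlier finiteness paper; the first option abandons uniformity and is not actually performed (note also that passing to the minimal representative of $us_\ga W_P$ rearranges the shape globally -- in type A it deletes a full hook -- so this is not a ``short check near $\ga=\al_1$''), and the second essentially assumes a statement equivalent to what is to be proved. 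So as written there is a genuine gap: the existence and compatibility of the slides $\phi_d$, and the identification $I\bigl(u(-1)\bigr)=\phi_1\bigl(I(u)\ssm I(z_1)\bigr)$, are asserted rather than proved.

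For comparison, the paper closes exactly this gap with a short, type-uniform coset argument and never needs the slide maps: setting $v=(u\cup z_d)/z_d$, the reduced factorization $v\,z_dw_P=(u\cup z_d)w_P$ together with the fact that $z_dw_P$ is maximal in its left coset $W_Pz_dw_P$ (a consequence of $(z_dw_P)^{-1}=z_dw_P$, established just before the lemma) shows $v\in W^P$; then, using $\al_d\in I(u)$, one writes $(u\cap z_d)/(u\cap z_{d-1})=ys_\ga$ with $y\in W_P$, so $u(1-d)=v\,ys_\ga$ and hence $v$ is the minimal representative of $u(1-d)s_\ga W_P$, which is $u(-d)$ by the inductive hypothesis and the quoted characterization of $(\cdot)(-1)$. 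If you want to salvage your route, you would have to prove your statements (i)--(ii) from scratch (including the existence of the label-preserving isomorphisms $\phi_d$ and their compatibility under $z_{d+1}=z_d(1)$), which is more work and less uniform than the argument above.
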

\begin{proof}
  For $d=0$ the lemma follows because $u(0) = u$ and $z_0 = 1$.  Let
  $d>0$ and assume by induction that $u(1-d) = u/(u \cap z_{d-1})$.
  Notice that, if $u(1-d)=1$, then the lemma holds for $d$ because
  $u \leq z_{d-1} \leq z_d$ and $u(-d)=1$.  We may therefore assume
  that $u(1-d) \neq 1$, or equivalently $\al_d \in I(u)$.

  Set $v = u/(u \cap z_d) = (u \cup z_d)/z_d$.  Then we have an
  identity of reduced products $v z_d w_P = (u \cup z_d) w_P$.  Since
  $z_d w_P$ is maximal in its left coset $W_P z_d w_P$, it follows
  that $v$ is minimal in its right coset $v W_P$, in other words
  $v \in W^P$.  Since
  $I(u \cap z_d) \ssm I(u \cap z_{d-1}) \subset I(z_d) \ssm
  I(z_{d-1})$
  and $\al_d \in I(u)$, it follows that
  $(u \cap z_d)/(u \cap z_{d-1}) = y s_\ga$ for some $y \in W_P$.
  Since
  $u(1-d) = u/(u \cap z_{d-1}) = v \, (u \cap z_d)/(u\cap z_{d-1}) = v
  y s_\ga$,
  we obtain $v = u(1-d) s_\ga y^{-1}$.  It follows that $v \in W^P$ is
  the minimal representative of the coset $u(1-d) s_\ga W_P$, as
  required.

  The last identity now follows because
  $u^\vee(d) = u(-d)^\vee = w_0 (u \cup z_d) z_d^{-1} w_P = (u \cup
  z_d)^\vee (z_d w_P)^{-1} w_P = (u^\vee \cap z_d^\vee) z_d$.
\end{proof}

\def\vmm#1{\vspace{#1mm}}
\begin{table}
\caption{Partially ordered sets of cominuscule varieties
  with $I(z_1)$ highlighted.}
\label{tab:tablez1}
\begin{tabular}{|c|c|}
\hline%%%%%%%%%%%%%%%%%%%%%%%%%%%%%%%%%%%%%%%%%%%%%%%%%%%%%%%%%%%%%%%%%%%%%
&\vmm{-2}\\
Grassmannian $\Gr(3,7)$ of type A & Max.\ orthog.\ Grassmannian $\OG(6,12)$
\\
&\vmm{-3}\\
\pic{1}{dyn_gr37} &\\
&\\
&\vmm{-3}\\
$\tableau{12}{
[aLlTt]3 & [aTtBb]4 & 5 & [aTtBbRr]6 \\
[aLlRr]2 & [a]3 & 4 & 5 \\
[aLlRrBb]1 & [a]2 & 3 & 4
}$
&\vmm{-27}\\
&\pic{1}{dyn_og6}
\\ &
$\tableau{12}{
[aLlTtBb]6 & [aTt]4 & 3 & 2 & [aTtRr]1 \\
  & [aLlBb]5 & [aBb]4 & 3 & [aBbRr]2 \\
  &   & [a]6 & 4 & 3 \\
  &   &   & 5 & 4 \\
  &   &   &   & 6
}$
\\ &
\vmm{-2}\\
\hline%%%%%%%%%%%%%%%%%%%%%%%%%%%%%%%%%%%%%%%%%%%%%%%%%%%%%%%%%%%%%%%%%%%%%
&\vmm{-2}\\
Lagrangian Grassmannian $\LG(6,12)$ & Cayley Plane $E_6/P_6$
\\
&\vmm{-3}\\
\pic{1}{dyn_sg6} & \\
&\vmm{-2}\\
$\tableau{12}{
[aLlTtBb]6 & [aTtBb]5 & 4 & 3 & 2 & [aTtBbRr]1 \\
  & [a]6 & 5 & 4 & 3 & 2 \\
  &   & 6 & 5 & 4 & 3 \\
  &   &   & 6 & 5 & 4 \\
  &   &   &   & 6 & 5 \\
  &   &   &   &   & 6
}$
&\vmm{-36}\\
& \pic{1}{dyn_e6} \\
&\vmm{-2}\\
&
$\tableau{12}{
[aLlTtBb]6 & [aTtBb]5 & [aTt]4 & 3 & [aTtRr]1 \\
  &   & [aLlBb]2 & [a]4 & [aRr]3 \\
  &   &   & [aLlBb]5 & [aBb]4 & [aTtBbRr]2 \\
  &   &   & [a]6 & 5 & 4 & 3 & 1
}$
\\
& \vmm{-2}\\
\hline%%%%%%%%%%%%%%%%%%%%%%%%%%%%%%%%%%%%%%%%%%%%%%%%%%%%%%%%%%%%%%%%%%%%%
& \vmm{-2}\\
Even quadric $Q^{10} \subset \bP^{11}$ & Freudenthal variety $E_7/P_7$
\\
&\vmm{-3}\\
\pic{1}{dyn_q10} &
\\
&\vmm{-3}\\
$\tableau{12}{
[aLlTtBb]1 & [aTtBb]2 & 3 & [aTt]4 & [aTtRr]5 \\
  &   &   & [aLlBb]6 & [aBb]4 & [aTtBb]3 & [aTtBbRr]2 & [a]1
}$
& \\
&\\
\hhline{-~}%%%%%%%%%%%%%%%%%%%%%%%%%%%%%
&\vmm{-2}\\
Odd quadric $Q^{11} \subset \bP^{12}$ &
\\
&\vmm{-3}\\
\pic{1}{dyn_q11} & \\
&\vmm{-1}\\
$\tableau{12}{
[aLlTtBb]1 & [aTtBb]2 & 3 & 4 & 5 & 6 & 5 & 4 & 3 & [aTtBbRr]2 & [a]1
}$
& \vmm{-52}\\
& \pic{1}{dyn_e7} \\
&\vmm{-3}\\
&
$\tableau{12}{
[aLlTtBb]7 & [aTtBb]6 & 5 & [aTt]4 & 3 & [aTtRr]1 \\
  &   &   & [aLlBb]2 & [a]4 & [aRr]3 \\
  &   &   &   & [aLl]5 & [a]4 & [aTtRr]2 \\
  &   &   &   & [aLlBb]6 & [aBb]5 & 4 & [aTtBb]3 & [aTtBbRr]1 \\
  &   &   &   & [a]7 & 6 & 5 & 4 & 3 \\
  &   &   &   &   &   &   & 2 & 4 \\
  &   &   &   &   &   &   &   & 5 \\
  &   &   &   &   &   &   &   & 6 \\
  &   &   &   &   &   &   &   & 7
}$
\vmm{-2}\\
& \\
\hline%%%%%%%%%%%%%%%%%%%%%%%%%%%%%%%%%%%%%%%%%%%%%%%%%%%%%%%%%%%%%%%%%%%%%
\end{tabular}
\end{table}

\Table{tablez1} displays one cominuscule variety $X$ from each family
together with the associated Dynkin diagram and the partially ordered
set $\cP_X$.  The marked node in the Dynkin diagram indicates the
cominuscule simple root $\ga$.  The roots of $\cP_X$ are represented
as boxes, and the partial order is given by $\al' \leq \al$ if and
only if $\al'$ is located north-west of $\al$.  Each box
$\al \in \cP_X$ is labeled by the number of the simple root
$\delta(\al)$.  In addition the shape $I(z_1)$ is marked.
\Lemma{comin_nbhd} implies that, given any element $u \in W^P$, one
may obtain the shape of $u(-1)$ from the shape of $u$ by first
removing any boxes contained in $I(z_1)$, and then moving the
remaining boxes to the upper-left corner of $\cP_X$.  More details
about some of the cases can be found in \Example{gr37}, \Example{LG},
and \Section{oddquadric}.

\subsection{Quantum $K$-theory}

Since $X$ is cominuscule, the equivariant quantum $K$-theory ring
$\QK_T(X)$ is an algebra over the power series ring $\Gq$ in a single
variable $q_\ga$; we drop the subscript and denote this variable by
$q$.  We proceed to give a simplified construction of the product in
$\QK_T(X)$ in the cominuscule case.  Let $\psi : K^T(X) \to K^T(X)$ be
the homomorphism of $\Gamma$-modules defined by
$\psi(\cO^u) = \cO^{u(-1)}$.  This map can also be defined
geometrically by $\psi = (\ev_2)_*(\ev_1)^*$, where $\ev_1$ and
$\ev_2$ are the evaluation maps from the moduli space
$\Mb_{0,2}(X,1)$.  We extend $\psi$ by linearity to a homomorphism
$\psi : \QK_T(X) \to \QK_T(X)$ of $\Gq$-modules.

\begin{prop}\label{prop:comin_prod}
  For $\sigma_1, \sigma_2 \in K^T(X)$ we have
  $\sigma_1 \star \sigma_2 = (1 - q \psi)(\sigma_1 \odot \sigma_2)$ in
  $\QK_T(X)$.
\end{prop}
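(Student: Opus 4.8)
The plan is to leverage \Proposition{Psi} and the structural identities between the operators $\psi$, $\Psi$, and curve neighborhoods in the cominuscule case. Recall that \Proposition{Psi} states $\sigma_1 \odot \sigma_2 = \Psi(\sigma_1 \star \sigma_2)$, where $\Psi(\cO^w) = \sum_{e \geq 0} q^e \cO^{w(-e)}$. Since the product $\sigma_1 \star \sigma_2$ lies in $\QK_T(X)$ and multiplication with $q$ commutes with $\Psi$, it suffices to show that $\Psi$ is invertible on $\QK_T(X)$ with inverse $1 - q\psi$; then applying $(1-q\psi)$ to both sides of \Proposition{Psi} yields the claim. So the real content is the operator identity $(1 - q\psi) \circ \Psi = \id$ on $\QK_T(X)$, or equivalently $\Psi = (1 - q\psi)^{-1} = \sum_{e \geq 0} q^e \psi^e$.

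\medskip

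The key computation is therefore to verify that $\psi^e(\cO^w) = \cO^{w(-e)}$ for all $e \geq 0$ and $w \in W^P$. This follows by induction on $e$: the base case $e = 0$ is immediate since $w(0) = w$ and $\psi^0 = \id$, and for the inductive step we have $\psi^{e+1}(\cO^w) = \psi(\psi^e(\cO^w)) = \psi(\cO^{w(-e)}) = \cO^{w(-e)(-1)}$. The crucial input is now the composition law $u(-d)(-d') = u(-d-d')$ for effective degrees $d, d' \in \N$, which is stated in \Section{comin_nbhd} (citing \cite{chaput.manivel.ea:quantum*1} or \cite[Lemma~4.2]{buch.chaput.ea:finiteness}) and holds precisely because $X$ is cominuscule. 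Applying this with $d = e$ and $d' = 1$ gives $w(-e)(-1) = w(-e-1)$, completing the induction. Consequently $\Psi(\cO^w) = \sum_{e \geq 0} q^e \cO^{w(-e)} = \sum_{e \geq 0} q^e \psi^e(\cO^w)$, and since both sides are $\Gq$-linear, $\Psi = \sum_{e \geq 0} q^e \psi^e$ as operators on $\QK_T(X)$.

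\medskip

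It remains to check that $\sum_{e \geq 0} q^e \psi^e$ is a well-defined endomorphism of $\QK_T(X)$ and is a two-sided inverse to $1 - q\psi$. Well-definedness is clear: since $\QK_T(X)$ is a module over the power series ring $\Gq$ and every application of $\psi$ raises the $q$-adic order of $q^e\psi^e$ by one, the series converges $q$-adically when evaluated on any element of $K^T(X)$, and extends $\Gq$-linearly. The inverse property is the formal identity $(1 - q\psi)\bigl(\sum_{e \geq 0} q^e \psi^e\bigr) = \sum_{e \geq 0} q^e \psi^e - \sum_{e \geq 0} q^{e+1} \psi^{e+1} = \id$, with the same telescoping on the other side. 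Then, applying $(1-q\psi)$ to $\sigma_1 \odot \sigma_2 = \Psi(\sigma_1 \star \sigma_2)$ and using $\Psi = (1-q\psi)^{-1}$ gives $(1-q\psi)(\sigma_1 \odot \sigma_2) = \sigma_1 \star \sigma_2$, as desired.

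\medskip

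I do not anticipate a serious obstacle here; the proposition is essentially a repackaging of \Proposition{Psi} once one recognizes that in the cominuscule case the single operator $\psi$ (degree-one curve neighborhoods) generates all of $\Psi$ by iteration. The one point requiring care is invoking the cominuscule composition law $u(-d)(-d') = u(-d-d')$ — this genuinely fails for general flag varieties, and it is exactly what makes the clean geometric description $\psi = (\ev_2)_*(\ev_1)^*$ iterate correctly. Everything else is formal manipulation of $q$-adically convergent operator series over $\Gq$.
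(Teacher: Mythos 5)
Your proposal is correct and is essentially the paper's own argument: the paper likewise uses the cominuscule composition law $w(-e-1)=w(-e)(-1)$ to identify $\Psi$ with $1+q\psi+q^2\psi^2+\cdots$ and then inverts it to $(1-q\psi)$ via \Proposition{Psi}. Your write-up merely spells out the induction and the formal convergence of the operator series in more detail.
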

\begin{proof}
  The identity $w(-e-1) = w(-e)(-1)$ among Weyl group elements implies
  that the operator $\Psi$ from \Section{nbhd} is given by
  $\Psi = 1 + q\psi + q^2\psi^2 + \cdots$.  It therefore follows from
  \Proposition{Psi} that
  $\sigma_1 \star \sigma_2 = \Psi^{-1}(\sigma_1 \odot \sigma_2) = (1 -
  q\psi)(\sigma_1 \odot \sigma_2)$.
\end{proof}

Let $\Gamma_d(X_u, X^v) \subset X$ denote the union of all stable
curves of degree $d$ in $X$ that pass through $X_u$ and $X^v$.
Equivalently we have
$\Gamma_d(X_u,X^v) = \ev_3(\ev_1^{-1}(X_u) \cap \ev_2^{-1}(X^v))$,
where $\ev_1, \ev_2, \ev_3 : \Mb_{0,3}(X,d) \to X$ are the evaluation
maps.  The following version of the {\em quantum equals classical\/}
theorem was proved in \cite[Cor.~4.2]{buch.chaput.ea:projected}.

\begin{thm}\label{thm:qclas}
  For $u, v \in W^P$ we have in $K^T(X)$ that
  \[
  [\cO_{\Gamma_d(X_u, X^v)}] \ = \sum_{w \in W^P} I_d(\cO_u, \cO^v,
  \cO_w^\vee)\, \cO^w \,.
  \]
\end{thm}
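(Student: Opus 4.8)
The plan is to express both sides of the claimed identity as explicit pushforwards along structure morphisms and compare them term by term after pairing against the dual Schubert basis. Write $M = \Mb_{0,3}(X,d)$ with evaluation maps $\ev_1,\ev_2,\ev_3$, and let $\pi : M \to \Mb_{0,2}(X,d)$ be the forgetful map that drops the third marked point; on the level of curves, the projection $\Gamma_d(X_u,X^v) = \ev_3(\ev_1^{-1}(X_u)\cap\ev_2^{-1}(X^v))$ records exactly the union of degree-$d$ stable curves meeting $X_u$ and $X^v$. The first step is to verify that the scheme-theoretic image is reduced, so that $[\cO_{\Gamma_d(X_u,X^v)}]$ is genuinely the pushforward of the structure sheaf of $\ev_1^{-1}(X_u)\cap\ev_2^{-1}(X^v)$; this is where one must invoke rational-connectedness/rationality results for the relevant fibers — the content of \cite{buch.chaput.ea:projected} and \cite[Cor.~4.2]{buch.chaput.ea:projected} — to guarantee that $(\ev_3)_*\cO_{\ev_1^{-1}(X_u)\cap\ev_2^{-1}(X^v)} = \cO_{\Gamma_d(X_u,X^v)}$ with no higher direct images and no multiplicities. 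Since $X$ is cominuscule, the relevant intersection $\ev_1^{-1}(X_u)\cap\ev_2^{-1}(X^v)$ has the expected dimension and rational singularities, which is precisely the input the cited corollary provides.

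The second step is purely formal: to identify the coefficient of $\cO^w$ in the right-hand side, pair it against $\cO_w^\vee$ using the Euler characteristic form, i.e.\ apply $\euler{X}(\,\cdot\,\cO_w^\vee)$. By definition of the dual basis this extracts $I_d(\cO_u,\cO^v,\cO_w^\vee)$, so it suffices to show
\[
\euler{X}\!\left([\cO_{\Gamma_d(X_u,X^v)}]\cdot\cO_w^\vee\right) \;=\; I_d(\cO_u,\cO^v,\cO_w^\vee)\,.
\]
Using Brion's description \eqn{brion_dual} one may instead pair against $[I_{\partial X_w}]$, or equivalently against $\cO^w$ itself in the opposite basis; either way the computation is reduced to a projection-formula manipulation. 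Pushing $[\cO_{\Gamma_d(X_u,X^v)}] = (\ev_3)_*\,\ev_1^*(\cO_u)\cdot\ev_2^*(\cO^v)$ (the product of pullback classes, valid once one knows the intersection is proper of the right dimension and the structure sheaves pull back correctly — again from the quantum-equals-classical machinery) forward along $X \to \{\pt\}$ and using functoriality of $\euler{}$ turns the left side into $\euler{M}\!\left(\ev_1^*(\cO_u)\cdot\ev_2^*(\cO^v)\cdot\ev_3^*(\cO_w^\vee)\right)$, which is exactly $I_d(\cO_u,\cO^v,\cO_w^\vee)$ by definition.

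The main obstacle is the first step: establishing that the projected Gromov-Witten variety $\Gamma_d(X_u,X^v)$ has rational singularities and that the relevant map has no higher cohomology, so that the pushforward of the structure sheaf is literally $[\cO_{\Gamma_d(X_u,X^v)}]$ rather than some perturbed class. This is the geometric heart of the "quantum equals classical" phenomenon and is where the cominuscule hypothesis is genuinely used — it is not a formal consequence of the setup but relies on the detailed analysis of curve neighborhoods and their incidence varieties carried out in \cite{buch.chaput.ea:projected}. Since we are permitted to cite \cite[Cor.~4.2]{buch.chaput.ea:projected}, the proof in the paper presumably just quotes that corollary and then performs the short projection-formula bookkeeping of the second and third steps above; I would structure my write-up the same way, spending one line invoking the cited result and the remainder on the elementary $K$-theoretic identity $\euler{X}(\,\cdot\,\cO_w^\vee) = \text{(coefficient of }\cO^w)$ combined with functoriality of proper pushforward.
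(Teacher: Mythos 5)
Your proposal is correct and matches the paper's treatment: the paper gives no independent argument for this statement but simply quotes \cite[Cor.~4.2]{buch.chaput.ea:projected}, which is exactly where you place the geometric weight (the pushforward of the structure sheaf along $\ev_3$ with no multiplicities or higher direct images). Your additional dual-basis/projection-formula bookkeeping is a sound reconstruction of how that corollary yields the displayed identity, so the two approaches coincide in substance.
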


As a consequence we have
$\cO_u \odot \cO^v = \sum_{d \geq 0} [\cO_{\Gamma_d(X_u,X^v)}]\, q^d$
for all $u,v \in W^P$.

\subsection{Chevalley formula for $K^T(X)$}\label{sec:chevalley}

Let $J = 1 - \cO^{s_\ga} \in K^T(X)$ denote the class of the ideal
sheaf of the (opposite) Schubert divisor $X^{s_\ga}$.  Our main result
is an explicit combinatorial formula for any product of the form
$J \star \cO^u$ in the ring $\QK_T(X)$.  For $u \in W^P$ we let
$J_u = J|_{u.P} \in \Gamma$ denote the restriction of the class $J$ to
the $T$-fixed point $u.P \in X$.  For $\al \in \Phi$ we let
$\al^\vee = \frac{2\al}{(\al,\al)}$ denote the coroot of $\al$, and we
let $\omega_\ga$ be the fundamental weight corresponding to $\gamma$.
The following lemma is proved in \Section{minuscule}.

\begin{lemma}\label{lemma:Ju}
  We have $J_u = [\C_{u.\omega_\ga - \omega_\ga}] \in \Gamma$, and the
  weight satisfies the identity
  \[
  \omega_\ga - u.\omega_\ga = \sum_{\al \in I(u)} (\omega_\ga,
  \al^\vee)\, \delta(\al) \,.
  \]
\end{lemma}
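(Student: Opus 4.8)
The plan is to compute the restriction $J_u = (1-\cO^{s_\ga})|_{u.P}$ directly from the standard localization formula in equivariant $K$-theory, and then to verify the asserted expression for the weight by induction on $\ell(u)$ using the box-by-box description of reduced words in $W^P$.

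First I would record what $\cO^{s_\ga}|_{u.P}$ is. The opposite Schubert divisor $X^{s_\ga}$ is a $B^-$-stable hypersurface, and its class is $\cO^{s_\ga} = 1 - \cL_\ga^{-1}$ up to the conventions, where $\cL_\ga$ is the line bundle on $X$ with $T$-linearization so that its fibre at $1.P$ is the character $-\omega_\ga$ (equivalently, $\cL_\ga = \cO_X(D)$ for the Schubert divisor). Concretely one uses that $K^T(X)$ is generated over $\Gamma$ by $\Pic^T(X)$ together with the Schubert basis, and that the Schubert divisor class is expressed through the fundamental weight $\omega_\ga$ because $P$ is the maximal parabolic omitting $\ga$. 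The fibre of $\cL_\ga$ at the fixed point $u.P$ carries the character $u.(-\omega_\ga) = -u.\omega_\ga$, so $\cO^{s_\ga}|_{u.P} = 1 - [\C_{u.\omega_\ga}]\cdot[\C_{-\omega_\ga}]^{-1}$... cleaning up signs and conventions, this gives $J_u = [\C_{u.\omega_\ga-\omega_\ga}]$. The only care needed here is to pin down the $T$-linearization of $\cL_\ga$ and the sign convention in $\cO^{s_\ga} = [\cO_{X^{s_\ga}}]$ versus $[\cO_X] - [\cO_{X^{s_\ga}}]$; I expect this bookkeeping, rather than anything conceptual, to be the main obstacle, and it is most safely handled by checking the case $X = \bP^1$ against the known Chevalley formula and the definition $J = 1-\cO^{s_\ga}$.

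For the weight identity, set $f(u) = \omega_\ga - u.\omega_\ga$ and $g(u) = \sum_{\al\in I(u)} (\omega_\ga,\al^\vee)\,\delta(\al)$; I want $f=g$ on $W^P$. Both vanish at $u=1$ since $I(1)=\emptyset$. For the inductive step, take $u \in W^P$ with $u \neq 1$, pick a box $\al_0$ maximal in the order ideal $I(u)$, and write $u = u' s_{\delta(\al_0)}$ as a reduced product with $u' \in W^P$ and $I(u') = I(u)\ssm\{\al_0\}$, using the box/reduced-word dictionary recalled in \Section{cominuscule}. Actually it is cleaner to remove a \emph{minimal} box and write $u = s_\be u'$ on the left (the Bruhat order on $W^P$ is the left weak order), but either way one reduces to understanding how $f$ and $g$ change when one box is added. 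For $f$: $f(u) - f(u') = u'.\omega_\ga - u.\omega_\ga = u'.(\omega_\ga - s_{\delta(\al_0)}.\omega_\ga)$ when removing a box on the right; and $\omega_\ga - s_{\delta(\al_0)}.\omega_\ga = (\omega_\ga, \delta(\al_0)^\vee)\,\delta(\al_0)$ by the reflection formula. The point is then that $u'.\delta(\al_0) = \delta(\al_0)$-twisted... here one invokes the definition $\delta(\al) = w_\la.\al$ where $\la = \{\al' \in \cP_X : \al' < \al\}$, together with $(\omega_\ga,\al_0^\vee) \in \{0,1\}$ (all roots of $\cP_X$ have $\ga$-coefficient one, so the pairing with $\omega_\ga$ is controlled), to match $u'.\big((\omega_\ga,\delta(\al_0)^\vee)\delta(\al_0)\big)$ with $(\omega_\ga,\al_0^\vee)\,\delta(\al_0)$. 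For $g$: $g(u)-g(u') = (\omega_\ga,\al_0^\vee)\,\delta(\al_0)$ directly from the definition. So the induction closes once one checks the compatibility $u'.\delta(\al_0) = $ (the appropriate simple root) and the scalar identification $(\omega_\ga,\delta(\al_0)^\vee) = (\omega_\ga,\al_0^\vee)$.

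The step I expect to be genuinely delicate is the last one: relating $(\omega_\ga,\delta(\al)^\vee)$ and the action of $u'$ on $\delta(\al)$ back to the original root $\al$ and the scalar $(\omega_\ga,\al^\vee)$. This is exactly where one needs the structural facts of Proctor and Stembridge (full commutativity, $I(u)\mapsto$ lower order ideal, the labeling $\delta$) rather than a bare Weyl-group computation, because $\delta(\al)$ is defined by conjugating $s_\al$ by $w_\la$ and the equality $s_{\delta(\al)} = w_{\la\cup\al}/w_\la$ must be used to transport weights. I would organize this as a short sublemma: for any $u \in W^P$ and any box $\al$ maximal in $I(u)$, writing $u = w_{I(u)}$ one has $w_{I(u)\ssm\al}.\al$ equal to a simple root $\be$ with $(\omega_\ga,\be^\vee) = (\omega_\ga,\al^\vee)$, which follows because $w_{I(u)\ssm\al}$ and $w_\la$ (with $\la = \{\al' < \al\}$) differ by an element fixing $\al$ up to the braid/commutation relations available by full commutativity. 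Once that sublemma is in hand, both displayed identities of the Lemma drop out, and one concludes by induction on $\ell(u)$.
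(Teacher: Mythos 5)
Your plan for the first identity is essentially the paper's: identify $J=1-\cO^{s_\ga}$ with the class of a $T$-linearized line bundle attached to $\omega_\ga$ and restrict to $u.P$. The sign/linearization bookkeeping you defer is resolved in the paper not by a $\bP^1$ check but by exhibiting a $B^-$-stable section $\sigma$ of $L=G\times^P\C_{-\omega_\ga}$ of weight $-\omega_\ga$ with $Z(\sigma)=X^{s_\ga}$ as schemes, which yields the exact sequence $0\to L^\vee\otimes\C_{-\omega_\ga}\to\cO_X\to\cO_{X^{s_\ga}}\to 0$ and hence $J=[L^\vee\otimes\C_{-\omega_\ga}]$, so $J_u=[\C_{u.\omega_\ga-\omega_\ga}]$. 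That part of your plan is fine.

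The inductive step for the weight identity, however, contains a genuine error. Removing a maximal box $\al_0$ of $I(u)$ gives $u=u's_{\al_0}$ (reflection in the root $\al_0\in\cP_X$, which is generally \emph{not} simple), equivalently $u=s_{\delta(\al_0)}u'$ with the simple reflection on the \emph{left}; your decomposition $u=u's_{\delta(\al_0)}$, with the simple reflection on the right, is not what the box/reduced-word dictionary gives (and your side remark about removing a \emph{minimal} box is also off: removing a minimal box does not leave an order ideal). This mix-up propagates into your proposed sublemma, which is false as stated: since $\delta(\al)$ is a simple root, $(\omega_\ga,\delta(\al)^\vee)$ equals $1$ if $\delta(\al)=\ga$ and $0$ otherwise, whereas $(\omega_\ga,\al^\vee)$ equals $1$ for $\al$ long and $2$ for $\al$ short. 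For instance in $\Gr(3,7)$ take $\al=\vep_3-\vep_5$, so $\delta(\al)=\vep_4-\vep_5\neq\ga$ and $(\omega_\ga,\delta(\al)^\vee)=0\neq 1=(\omega_\ga,\al^\vee)$; in $\LG(n,2n)$ any short box gives $0\neq 2$. The correct statement, and the only input the paper needs, is $u'.\al_0=\delta(\al_0)$ (equivalently $(u'.\omega_\ga,\delta(\al_0)^\vee)=(\omega_\ga,\al_0^\vee)$ --- the pairing must be taken against $u'.\omega_\ga$, not $\omega_\ga$), which follows from the skew-shape/reduced-word facts recalled in \Section{cominuscule}: $uu'^{-1}=s_{\delta(\al_0)}$ and $u>u'$ force $u'.\al_0=\delta(\al_0)$. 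With the correct decomposition the induction closes in one line, exactly as in the paper: $\omega_\ga-u.\omega_\ga=\omega_\ga-u'.(\omega_\ga-(\omega_\ga,\al_0^\vee)\al_0)=(\omega_\ga-u'.\omega_\ga)+(\omega_\ga,\al_0^\vee)\,\delta(\al_0)$. So the skeleton of your induction is right, but as written the key step rests on an identity that fails for almost every box, and it must be replaced by $u'.\al_0=\delta(\al_0)$.
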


Define a homomorphism $\theta_0 : K^T(X) \to K^T(X)$ of
$\Gamma$-modules by
\[
\theta_0(\cO^u) \ = \
\sum_{w/u \text{ rook strip}} (-1)^{\ell(w/u)}\, \cO^w
\]
where the sum is over all $w \in W^P$ for which $u \leq w$ and $w/u$
is a rook strip.  Equivalently, $\theta_0(\cO^u)$ is the class of the
ideal sheaf of the boundary $\partial X^u \subset X^u$ by Brion's
identity \eqn{brion_dual} together with the following lemma.

\begin{lemma}\label{lemma:th0dual}
  The class $\theta_0(\cO^u)$ is dual to $\cO_u$ in $K^T(X)$.  More
  precisely we have
  $\euler{X}(\theta_0(\cO^u) \cdot \cO_v) = \delta_{u,v}$ for
  $u,v \in W^P$.
\end{lemma}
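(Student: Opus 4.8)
The plan is to prove that $\euler{X}(\theta_0(\cO^u) \cdot \cO_v) = \delta_{u,v}$ by reducing it to a statement about the boundary ideal sheaf and then invoking Brion's identity \eqn{brion_dual}, or alternatively to verify it directly from the combinatorics of rook strips. I will pursue the first, more conceptual, route. First I would recall that by \eqn{brion_dual} we have $\cO^\vee_v = [I_{\partial X^v}]$ for the \emph{opposite} Schubert varieties; more precisely, applying the automorphism of $X$ induced by $w_0$, which interchanges $X_w \leftrightarrow X^{w^\vee}$, the dual basis element to $\cO^u$ (in the sense $\euler{X}(\cO^u \cdot \cO^\vee_w) = \delta_{u,w}$) is represented by the ideal sheaf $[I_{\partial X^u}]$ of the boundary $\partial X^u = X^u \ssm B^- u.P$ inside $X^u$. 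Equivalently, the short exact sequence $0 \to I_{\partial X^u} \to \cO_{X^u} \to \cO_{\partial X^u} \to 0$ gives $[I_{\partial X^u}] = \cO^u - [\cO_{\partial X^u}]$. So the core of the argument is to identify $[I_{\partial X^u}]$ with the explicitly defined class $\theta_0(\cO^u)$.

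Next, I would compute $[\cO_{\partial X^u}]$ in the Schubert basis. The boundary $\partial X^u$ is the union of the codimension-one Schubert subvarieties $X^w \subset X^u$, i.e.\ those $X^w$ with $u \leq w$ (in the Bruhat order on $W^P$, so that $X^w \subseteq X^u$) and $\ell(w) = \ell(u)+1$; equivalently $w/u = s_{\delta(\al)}$ for a single box $\al$ that can be added to $I(u)$. The $K$-theory class of a union of Schubert varieties can be computed by inclusion–exclusion over their intersections, and here the key point is that in a cominuscule $G/P$ the intersection $X^{w_1} \cap X^{w_2}$ of two such boundary components equals $X^{w_1 \cup w_2}$, whose shape $I(w_1 \cup w_2)$ adds \emph{both} boxes — provided the two boxes are incomparable — and more generally intersections of several boundary components correspond to adding a collection of mutually incomparable boxes, i.e.\ precisely the rook strips $w/u$. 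Since $G/P$ is cominuscule and Schubert varieties are Cohen–Macaulay with rational singularities, these unions have the expected $K$-theory behaviour (the structure sheaf of a union is the alternating sum over intersections, with no higher derived corrections), so I obtain $[\cO_{\partial X^u}] = -\sum_{w/u \text{ rook strip},\, w \neq u} (-1)^{\ell(w/u)} \cO^w$. Combining with $[I_{\partial X^u}] = \cO^u - [\cO_{\partial X^u}]$ yields exactly $\theta_0(\cO^u)$.

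An alternative, purely combinatorial route — which I would keep in reserve in case the geometric bookkeeping of the union becomes delicate — is to verify the duality identity directly: $\euler{X}(\cO^w \cdot \cO_v) = [\text{$v \leq w$ in $W^P$}]$ is the standard pairing between the two Schubert bases in $K^T(X)$ (this is where $\euler{X}(\cO^w \cdot \cO_v) = 1$ whenever $X^w \cap X_v \neq \emptyset$, which holds iff $v \le w$, using that the intersection is then a single point or more generally has trivial higher cohomology). Then $\euler{X}(\theta_0(\cO^u) \cdot \cO_v) = \sum_{w/u \text{ rook strip}} (-1)^{\ell(w/u)} [v \le w]$, and one checks this Möbius-type alternating sum over the interval of rook strips above $u$ that contain $v$ equals $\delta_{u,v}$: if $v \not\ge u$ the sum is empty; if $v = u$ only the term $w=u$ survives; and if $v > u$ with $v \le$ some rook strip above $u$, the set of rook strips $w$ with $u \le w$, $v \le w$ is a Boolean lattice on the boxes that are both $\ge$ (something forced by $v$) and form a rook strip, over which $\sum (-1)^{\#\text{boxes}} = 0$. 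The main obstacle in either approach is the same: controlling the intersections of the boundary Schubert subvarieties inside a cominuscule $X^u$ carefully enough — in the geometric proof, to know the union's structure sheaf has the naive inclusion–exclusion expansion with no correction terms; in the combinatorial proof, to identify precisely which rook strips lie above both $u$ and $v$ and to see that they form a Boolean interval. I expect the cleanest writeup is the geometric one, citing Brion \cite{brion:positivity} for \eqn{brion_dual} and the Cohen–Macaulay/rational-singularities properties of Schubert varieties in $G/P$ for the behaviour of structure sheaves of unions.
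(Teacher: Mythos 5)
Your primary (geometric) route is correct, and it is genuinely different from the paper's proof. The paper computes the pairing directly: $\cO^w \cdot \cO_v = [\cO_{X^w \cap X_v}]$, and since the Richardson variety $X^w \cap X_v$ is empty for $w \not\leq v$ and is rational with rational singularities for $w \leq v$, one gets $\euler{X}(\cO^w \cdot \cO_v) = 1$ exactly when $w \leq v$; the lemma then follows from a sign-cancelling involution (adding/removing a minimal box of $I(v) \ssm I(u)$) on the rook strips $w$ with $u \leq w \leq v$. You instead prove $\theta_0(\cO^u) = [I_{\partial X^u}]$ by inclusion--exclusion over the Schubert divisors of $X^u$ — and your combinatorial bookkeeping is right: the boxes of any rook strip over $u$ are automatically minimal boxes of $\cP_X \ssm I(u)$, so the intersections of boundary components are exactly the $X^w$ with $w/u$ a nonempty rook strip — and then you quote Brion's identity \eqn{brion_dual} transported by $w_0$. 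This reverses the paper's logic (the paper obtains the ideal-sheaf description of $\theta_0(\cO^u)$ as a consequence of the lemma plus \eqn{brion_dual}); what your route buys is a direct geometric identification of $\theta_0(\cO^u)$, at the cost of two inputs you should cite precisely. First, the exactness of $0 \to \cO_{Y\cup Z} \to \cO_Y \oplus \cO_Z \to \cO_{Y\cap Z} \to 0$ for unions of Schubert varieties rests on the scheme-theoretic intersections being reduced, which is a Frobenius-splitting fact, not a consequence of Cohen--Macaulayness or rational singularities as you suggest. Second, the $w_0$-translation in equivariant $K$-theory is only semilinear over $\Gamma$ (it twists characters by $w_0$); this is harmless here because $\delta_{u,v} \in \Z$, but it should be said, and your phrasing has a slip: Brion's class $[I_{\partial X_u}]$ is dual to the basis $\{\cO^w\}$, while what you need (and what applying $w_0$ gives) is that $[I_{\partial X^u}]$ pairs dually against the basis $\{\cO_v\}$.

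Your reserve combinatorial route is essentially the paper's proof, but as written it has the pairing backwards: $X^w \cap X_v \neq \emptyset$ if and only if $w \leq v$, so $\euler{X}(\cO^w \cdot \cO_v)$ equals $1$ when $w \leq v$, not when $v \leq w$. With your direction the case $v = u$ fails — every rook strip $w \geq u$ would contribute, and the alternating sum would be $0$ rather than $1$ whenever $u \neq 1^\vee$. With the correct direction the sum runs over rook strips $w$ with $u \leq w \leq v$, i.e.\ over subsets of the minimal boxes of $I(v) \ssm I(u)$, and the Boolean cancellation you describe goes through, recovering exactly the paper's argument.
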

\begin{proof}
  We have $\cO^u \cdot \cO_v = [\cO_{X^u \cap X_v}]$ in $K^T(X)$.
  Since $X^u \cap X_v$ is empty for $u \not\leq v$, and is rational
  with rational singularities for $u \leq v$
  \cite{richardson:intersections, brion:positivity}, we obtain
  \[
  \euler{X}(\cO^u \cdot \cO_v) = \begin{cases}
    1 & \text{if $u \leq v$;}\\
    0 & \text{if $u \not\leq v$.}
  \end{cases}
  \]
  It follows that $\euler{X}(\theta_0(\cO^u), \cO_v)$ is the sum of
  the signs $(-1)^{\ell(w/u)}$ over the set $S$ of all $w \in W^P$ for
  which $u \leq w \leq v$ and $w/u$ is a rook strip.  This sum is
  equal to 1 if $u=v$, and it is empty if $u \not\leq v$.  Assume that
  $u < v$ and let $\al$ be any minimal box of the skew shape
  $I(v) \ssm I(u)$.  Then the map $w \mapsto w s_\al$ is an involution
  of $S$.  Since $\ell(w s_\al) = \ell(w) \pm 1$ for all $w \in S$, we
  have $(-1)^{\ell(w/u)} + (-1)^{\ell(w s_\al/u)} = 0$.  This implies
  that $\euler{X}(\theta_0(\cO^u), \cO_v) = 0$, as required.
\end{proof}

For elements $u, w \in W^P$ such that $u \leq w$ and $w/u$ is a short
rook strip, we define an element $\delta(w/u)$ of the root lattice by
\[
\delta(w/u) = \sum_{\al \in I(w)\ssm I(u)} \delta(\al) \,.
\]
Equivalently, $\delta(w/u)$ is the sum of the (short) simple roots
$\be \in \Delta$ for which the reflection $s_\be$ appears in a reduced
expression for $w/u$.  Define an additional homomorphism of
$\Gamma$-modules $\phi : K^T(X) \to K^T(X)$ by
\[
\phi(\cO^u) = \sum_{w/u \text{ short rook strip}}
(-1)^{\ell(w/u)}\, [\C_{-\delta(w/u)}]\, \cO^w
\]
where the sum is over all $w \in W^P$ for which $u \leq w$ and $w/u$
is a short rook strip.  If the root system $\Phi$ is simply laced,
then $\phi$ is the identity map.  The following result is a special
case of Lenart and Postnikov's Chevalley formula for the equivariant
$K$-theory of arbitrary flag varieties \cite{lenart.postnikov:affine}.

\begin{thm}\label{thm:ktchev}
  Let $X = G/P$ be a cominuscule space.  For any $u \in W^P$ we have
  \[
  J \cdot \cO^u \, = \, J_u\, \theta_0(\phi(\cO^u))
  \]
  in $K^T(X)$.
\end{thm}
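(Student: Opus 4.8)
The plan is to prove the formula by restriction to $T$-fixed points, using the injectivity of $K^T(X) \to \prod_{w \in W^P} \Gamma$. Since both sides of the claimed identity $J \cdot \cO^u = J_u\, \theta_0(\phi(\cO^u))$ lie in $K^T(X)$, it suffices to check that $(J \cdot \cO^u)|_{w.P} = J_u\, \theta_0(\phi(\cO^u))|_{w.P}$ for every $w \in W^P$. The left side is $J_w\, \cO^u|_{w.P}$ by multiplicativity of restriction, and $\cO^u|_{w.P}$ vanishes unless $u \leq w$; when $u \leq w$ it equals a product over the roots in $I(w) \ssm I(u)$ coming from the standard $K$-theoretic restriction formula (Graham–Willems, cited in the excerpt). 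So the identity to verify becomes, for each $w \geq u$,
\[
J_w\, \cO^u|_{w.P} \ = \ J_u \sum_{\substack{v:\, u \leq v \leq w \\ v/u \text{ rook strip}}} (-1)^{\ell(v/u)}\, [\C_{-\delta(v/u \cap \text{short})}]\, \cO^v|_{w.P}\,,
\]
where I have composed $\phi$ with $\theta_0$ and used that $\phi$ only contributes short rook strips below the rook strips summed by $\theta_0$. The key structural input is \Lemma{Ju}, which gives $J_w/J_u = [\C_{u.\omega_\ga - w.\omega_\ga}]$ as an explicit product of characters indexed by $I(w) \ssm I(u)$.

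Next I would reduce to a purely combinatorial identity in the group ring of the character lattice. Using the fully commutative structure of $W^P$ and the box-labeling $\delta : \cP_X \to \Delta$, the restriction $\cO^v|_{w.P}$ for $u \leq v \leq w$ factors (up to the known universal factor for $\cO^u|_{w.P}$) as a product of terms $(1 - [\C_{\text{something}}])$ over the boxes of $I(w) \ssm I(v)$, while the boxes of $I(v)\ssm I(u)$ that are "deleted" contribute the signs and the $\delta$-twists. After dividing through by $\cO^u|_{w.P}$, the identity should become a telescoping/inclusion–exclusion statement: the sum over rook strips $v/u$ inside the skew shape $I(w)\ssm I(u)$ of signed products collapses, by the same involution argument used in the proof of \Lemma{th0dual} applied box-by-box along comparabilities, to the single character $[\C_{u.\omega_\ga - w.\omega_\ga}]$. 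Concretely, for each box $\al$ of the skew shape one gets a local factor of the form $(1 - e^{c_\al})$ on the restriction side matching a factor $(1 - e^{c_\al})$ on the $\theta_0\phi$ side, with the short-root twist accounting exactly for the discrepancy between long and short roots in the non-simply-laced cases.

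The main obstacle will be the non-simply-laced cases — Lagrangian Grassmannians and odd quadrics — where $\phi$ is genuinely nontrivial and the coefficients $(\omega_\ga, \al^\vee)$ in \Lemma{Ju} can exceed $1$. In the simply-laced (minuscule) case, $\phi = \id$ and every $(\omega_\ga,\al^\vee) = 1$, so the combinatorics above is a clean inclusion–exclusion over an order ideal and the involution argument finishes it directly. For $\LG(n,2n)$ and $Q^{2m+1}$ one must check that the extra factors $[\C_{-\delta(v/u)}]$ attached to short rook strips, together with the larger coefficients on short boxes in the weight $\omega_\ga - u.\omega_\ga$, conspire to give the correct restriction. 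I would handle this by invoking that \Theorem{ktchev} is explicitly a specialization of the Lenart–Postnikov Chevalley formula \cite{lenart.postnikov:affine}: one takes their formula for $K^T(G/P)$, restricts the degree-zero (classical) part, and translates their quantum-alcove-path data into the box language of $\cP_X$, checking that the "short" bonds in their $\la$-chains correspond precisely to short roots in $I(w)\ssm I(u)$. The bookkeeping is routine once the dictionary is set up, but verifying the dictionary in the two non-simply-laced families is the step that requires genuine care, and it is presumably where the authors defer to \Section{minuscule} together with the case-by-case analysis for Lagrangian Grassmannians and odd quadrics mentioned in the introduction.
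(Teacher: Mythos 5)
There is a genuine gap, and it occurs at the two places where your argument would have to do real work. First, your localization reduction rests on the claim that the fixed-point restriction $\cO^v|_{w.P}$, for $u \leq v \leq w$, factors as a product of terms $1-[\C_{\,\cdot\,}]$ over the boxes of $I(w)\ssm I(v)$. That is not true: the restriction formulas you invoke (Graham, Willems, and in the cominuscule case the excited-Young-diagram formulas) express $\cO^v|_{w.P}$ as a \emph{sum} over subwords or excited diagrams; only the diagonal restriction $\cO^w|_{w.P}$ is a single product over $I(w)$. Already the simplest case exposes the problem: $\cO^{s_\ga}|_{w.P} = 1 - J_w = 1-[\C_{w.\omega_\ga-\omega_\ga}]$ is $1$ minus one character whose weight involves \emph{all} boxes of $I(w)$ (by \Lemma{Ju}), not a product of box-by-box factors over $I(w)\ssm I(s_\ga)$. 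Consequently the ``telescoping/inclusion--exclusion'' identity you promise is never actually set up, the involution of \Lemma{th0dual} has nothing to act on, and a correct localization proof would have to manipulate the excited-diagram sums — a genuinely harder computation than the routine one you describe.

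Second, in the non-simply-laced cases you do not give an argument at all: you defer to Lenart--Postnikov, but the entire content of that route is the verification that the sequence of all boxes of $\cP_X$ followed by its short boxes is a $\la$-chain for $\omega_\ga$ and that their formula then collapses to $J_u\,\theta_0(\phi(\cO^u))$; declaring this ``routine bookkeeping'' leaves precisely the nontrivial step unproved, and within this paper the purpose of \Section{chevproof} is to give a proof independent of that dictionary. Your guess about the paper's method is also off: the proof is geometric, not combinatorial. A $B^-$-stable section of $L = G\times^P\C_{-\omega_\ga}$, translated by $u$, cuts out $\partial X^u$ scheme-theoretically whenever all minimal boxes of $\cP_X\ssm I(u)$ are long (\Proposition{boundary}), so $J_u^{-1}J\cdot\cO^u=[I_{\partial X^u}]$ is dual to $\cO_u$ by Brion's identity \eqn{brion_dual}; this settles all minuscule cases and more. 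Lagrangian Grassmannians are then treated via the projection of Richardson varieties to $\bP E$, the explicit complete-intersection equations of $Z^u_w$, and Serre duality, and odd quadrics by a direct computation in $K^T(\bP E)$ — none of which appears in your outline.
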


\Theorem{ktchev} can be derived from Lenart and Postnikov's more
general result as follows.  Let $\al_1, \al_2, \dots, \al_s$ be any
ordering of the roots in $\cP_X$ that is compatible with the partial
order $\leq$, and let $\be_1, \be_2, \dots, \be_t$ be an ordering of
the short roots of $\cP_X$ that is compatible with $\leq$.  Then one
can show that the sequence $(\al_1,\dots,\al_s, \be_1,\dots,\be_t)$ is
what is called a $\la$-chain for the weight $\la = \omega_\ga$ in
\cite{lenart.postnikov:affine}, after which \Theorem{ktchev} is
equivalent to \cite[Thm.~13.1]{lenart.postnikov:affine}.
In \Section{chevproof} we give an alternative proof of
\Theorem{ktchev} based on geometric considerations.

Notice that since the cominuscule root $\ga$ is long and the root
system $\Phi$ does not have type ${\rm G}_2$, we have
$(\omega_\ga, \al^\vee) = 2$ for any short root $\al \in \cP_X$.
\Lemma{Ju} therefore implies that $J_w = J_u [\C_{-2\delta(w/u)}]$
whenever $w/u$ is a short rook strip.  For this reason we will use the
notation
$\sqrt{J_u J_w} = J_u [\C_{-\delta(w/u)}] = [\C_{u.\omega_\ga -
  \omega_\ga - \delta(w/u)}]$.
The identity of \Theorem{ktchev} can be rewritten as
\[
J \cdot \cO^u \ = \ \sum_{w/u \text{ short rook strip}}
(-1)^{\ell(w/u)} \sqrt{J_u J_w}\ \theta_0(\cO^w) \,.
\]
It therefore follows from \Lemma{th0dual} that \Theorem{ktchev} is
equivalent to the following geometric identity, which is interesting
by itself.

\begin{thm}\label{thm:ktchev2}
  For $u, w \in W^P$ we have
  \[
  \euler{X}(J \cdot \cO^u \cdot \cO_w) \ = \
  \begin{cases}
    (-1)^{\ell(w/u)}\, \sqrt{J_u J_w} &
    \text{if $w/u$ is a short rook strip,} \\
    0 & \text{otherwise.}
  \end{cases}
  \]
\end{thm}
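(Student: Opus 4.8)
The plan is to compute the Euler characteristic $\euler{X}(J \cdot \cO^u \cdot \cO_w)$ directly, using $J = 1 - \cO^{s_\ga}$ to split the product into two terms, and then to identify the answer by exploiting the known $K$-theoretic triple intersections for Schubert classes together with the geometry of the Schubert divisor $X^{s_\ga}$. First I would record that, by $\Gamma$-linearity and the definition of the dual basis, $\euler{X}(\cO^u \cdot \cO_w) = 1$ if $u \leq w$ and $0$ otherwise; this is exactly the computation carried out in the proof of \Lemma{th0dual} using that $X^u \cap X_w$ is empty unless $u \leq w$ and otherwise rational with rational singularities \cite{richardson:intersections, brion:positivity}. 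Thus the first term contributes $\euler{X}(\cO^u \cdot \cO_w) = [\,u \leq w\,]$, and the whole problem reduces to evaluating $\euler{X}(\cO^{s_\ga} \cdot \cO^u \cdot \cO_w)$ and showing that the difference matches the stated case distinction.

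For the second term, the key is that $\cO^{s_\ga} \cdot \cO^u$ is itself governed by the classical Chevalley formula in $K^T(X)$: writing $\cO^{s_\ga} = 1 - J$, the product $\cO^{s_\ga} \cdot \cO^u = \cO^u - J \cdot \cO^u$, so the computation is essentially self-referential and I would instead attack $\euler{X}(\cO^{s_\ga} \cdot \cO^u \cdot \cO_w) = \euler{X}(\cO^{s_\ga} \cdot [\cO_{X^u \cap X_w}])$ geometrically. When $u \leq w$ the Richardson variety $X^u \cap X_w$ is an irreducible $T$-stable subvariety with rational singularities, and $\cO^{s_\ga}$ restricts to it as the class of the pullback of the Schubert divisor; the product $\cO^{s_\ga} \cdot [\cO_{X^u \cap X_w}]$ is then the structure sheaf class of the (possibly reducible) divisor $X^{s_\ga} \cap X^u \cap X_w$ inside $X^u \cap X_w$, up to correction terms measured by the weights $J_v|_{v.P}$ at the relevant $T$-fixed points. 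I would use the localization description $K^T(X) \hookrightarrow \prod_{v \in W^P} \Gamma$ \cite{kostant.kumar:t-equivariant} together with \Lemma{Ju}, which gives $J_v = [\C_{v.\omega_\ga - \omega_\ga}]$, to reduce the Euler characteristic to a sum over fixed points $v$ with $u \leq v \leq w$ of terms $\euler{X}(\cO^v \cdot \cO_w^\vee) \cdot (\text{restriction of } J \cdot \cO^u \text{ at } v)$. The interval structure of the Bruhat order on $W^P$ — a distributive lattice in which $w/u$ depends only on the skew shape $I(w)\ssm I(u)$ — is what makes this sum collapse: I expect a telescoping argument like the involution $w \mapsto w s_\al$ used in \Lemma{th0dual}, which kills all contributions unless the skew shape $I(w)\ssm I(u)$ is an antichain of short boxes, i.e.\ a short rook strip, and in that case leaves exactly the product of coweight pairings $\prod_{\al \in I(w)\ssm I(u)}(\omega_\ga,\al^\vee) = 2^{\ell(w/u)}$ absorbed into the sign and into $\sqrt{J_u J_w} = [\C_{u.\omega_\ga - \omega_\ga - \delta(w/u)}]$.

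The main obstacle I anticipate is controlling the contribution of \emph{long} boxes in the skew shape $I(w) \ssm I(u)$: a priori such a box could contribute a nonzero term, and one must show it does not. The cleanest route is probably not fully type-uniform — it is likely to need the observation, already flagged after \Theorem{ktchev}, that $(\omega_\ga, \al^\vee) = 2$ for short $\al \in \cP_X$ (using that $\ga$ is long and $\Phi$ is not of type ${\rm G}_2$), together with a separate check that the analogous pairing vanishes or cancels for long boxes; for Lagrangian Grassmannians and odd quadrics this may require the specialized arguments the introduction warns about, postponed to \Section{oddquadric}. An alternative, which I would try first, is to avoid the case analysis entirely by deriving Theorem~\ref{thm:ktchev2} formally as the restatement of \Theorem{ktchev} that the surrounding text already asserts it to be: namely, apply $\euler{X}(\,\cdot\,\cO_w)$ to the identity $J \cdot \cO^u = \sum_{w'/u \text{ short rook strip}} (-1)^{\ell(w'/u)} \sqrt{J_u J_{w'}}\, \theta_0(\cO^{w'})$ and invoke \Lemma{th0dual}, so that only the term $w' = w$ survives. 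In that case the real content has been moved into \Theorem{ktchev} (proved in \Section{chevproof}), and the present proof is a two-line formal consequence; I would present it that way, with the localization/telescoping computation above as the backup if an independent proof is wanted.
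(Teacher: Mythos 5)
Your preferred route is exactly the paper's: \Theorem{ktchev2} is obtained by pairing the rewritten form of \Theorem{ktchev} against $\cO_w$ and invoking \Lemma{th0dual}, so the two statements are equivalent and the real content lives in the Chevalley formula itself. One caution about attribution: the geometric argument in \Section{chevproof} proves \Theorem{ktchev2} directly in the minuscule and Lagrangian cases (\Proposition{noshort} and \Section{lagrange}) and deduces \Theorem{ktchev} from it, with only the odd-quadric case going the other way, so citing ``\Theorem{ktchev} as proved in \Section{chevproof}'' would make your two-line derivation circular there; the non-circular source for \Theorem{ktchev} in your argument is the derivation from Lenart--Postnikov's formula sketched after its statement, while your backup localization/telescoping computation is essentially the work actually carried out in \Section{chevproof}.
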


\subsection{Chevalley formula for $\QK_T(X)$}

To state our Chevalley formula for the quantum $K$-theory ring, we
define the homomorphism $\theta_1 : K^T(X) \to K^T(X)$ of
$\Gamma$-modules as follows.  Let $u \in W^P$.  If $u \geq z_1$ then
we set
\[
\theta_1(\cO^u) \ = \ \sum_w \ (-1)^{\ell(w/u(-1))}\, \cO^w
\]
where the sum is over all $w \in W^P$ for which
$u(-1) \leq w \leq 1^\vee(-1) = z_1^\vee$ and $w/u(-1)$ is a rook
strip.  If $u \not\geq z_1$ then set $\theta_1(\cO^u) = 0$.

\begin{lemma}\label{lemma:theta_psi}
  We have $\theta_1 = \psi \, \theta_0$ as $\Gamma$-linear
  endomorphisms of $K^T(X)$.
\end{lemma}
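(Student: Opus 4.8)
The plan is to compare the two $\Gamma$-linear endomorphisms on the basis $\{\cO^u : u \in W^P\}$ by expanding each side explicitly and matching terms.  Fix $u \in W^P$.  By definition $(\psi\,\theta_0)(\cO^u) = \psi\big(\sum_{w/u \text{ rook strip}} (-1)^{\ell(w/u)}\, \cO^w\big) = \sum_{w/u \text{ rook strip}} (-1)^{\ell(w/u)}\, \cO^{w(-1)}$, where the sum is over all $w \in W^P$ with $u \le w$ and $w/u$ a rook strip.  On the other side, $\theta_1(\cO^u)$ is either $0$ (when $u \not\ge z_1$) or $\sum (-1)^{\ell(w'/u(-1))}\,\cO^{w'}$ over $w'$ with $u(-1) \le w' \le z_1^\vee$ and $w'/u(-1)$ a rook strip.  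So I need to show these two expansions agree.

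First I would handle the case $u \not\ge z_1$, where $\theta_1(\cO^u) = 0$.  Here the key point is that $u(-1) = 1$: indeed, $I(u)$ does not contain the minimal box $\al_1$ of $\cP_X \ssm I(z_0) = \cP_X$ (since $u \not\ge z_1$ forces $I(u) \cap I(z_1)$ to miss $\al_1$, and $\delta(\al_1) = \ga$), so by \Lemma{comin_nbhd} all boxes of $I(u)$ lie in $I(z_1)$ and get removed.  Wait — more carefully, $u \not\ge z_1$ means $I(z_1) \not\subset I(u)$; I should instead argue directly from \Lemma{comin_nbhd} that $u(-1) = u/(u\cap z_1)$, and then observe that the surviving shape need not be trivial.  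The cleaner route: I would show that when $u \not\ge z_1$, the map $w \mapsto w \cap$ (flip at the minimal box $\al_1$, which is never in $I(u(-1))$ since $\al_1 \notin I(z_1)$... ) — actually the right mechanism is that $\psi\,\theta_0(\cO^u)$ telescopes to zero by a sign-reversing involution.  So for $u \not\ge z_1$ I would exhibit an involution on the rook strips $w/u$ that pairs $w$ with $w' = w s_{\al_1}$ (adding or removing the box $\al_1$), which changes $\ell(w/u)$ by $1$ but leaves $w(-1)$ unchanged because $\al_1 \in I(z_1)$ is erased by $\psi$; this forces cancellation in pairs and gives $\psi\,\theta_0(\cO^u) = 0 = \theta_1(\cO^u)$.

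Second, for $u \ge z_1$ I would set up a bijection between the indexing sets.  Given a rook strip $w/u$, \Lemma{comin_nbhd} gives $w(-1) = w/(w \cap z_1) = (w \cup z_1)/z_1$, and similarly $u(-1) = (u \cup z_1)/z_1$; since $u \ge z_1$ the shape $I(u(-1)) = I(u) \ssm I(z_1)$ is obtained just by deleting the boxes of $I(z_1)$.  The content of the lemma is then: (i) the boxes of $I(w)\ssm I(u)$ that lie outside $I(z_1)$ form a rook strip $w(-1)/u(-1)$ with $\ell(w(-1)/u(-1)) = \ell(w/u) - |I(w)\cap I(z_1) \ssm I(u)\cap I(z_1)|$, and summing over the choice of which boxes of $I(z_1)\ssm I(u)$ to include produces exactly the alternating-sign cancellation that restricts the $\theta_1$-sum to $w' \le z_1^\vee$; (ii) conversely every $w'$ with $u(-1) \le w' \le z_1^\vee$ and $w'/u(-1)$ a rook strip arises as $w(-1)$ for a unique $w$ after the internal $I(z_1)$-boxes cancel.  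The bound $w' \le z_1^\vee$ appears because $z_1^\vee = 1^\vee(-1)$ is the top of the image of $\psi$, i.e.\ $w(-1) \le z_1^\vee$ always; and rook strips $w'$ that would exceed it simply don't occur.

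The main obstacle I expect is controlling the combinatorics of how $\psi$ acts on a rook strip: a rook strip $w/u$ in $\cP_X$ splits into the part inside $I(z_1)$ (which $\psi$ deletes) and the part outside (which $\psi$ keeps but translates to the upper-left corner of $\cP_X$), and I must check that the translated part is still a rook strip with the correct length, \emph{and} that distinct $w$'s with the same "outside part" are precisely matched up by an inclusion-exclusion over subsets of $I(z_1)\ssm I(u)$, which collapses (by the binomial identity $\sum (-1)^k \binom{n}{k} = 0$ for $n \ge 1$) unless no such boxes are available — equivalently unless $w \cup z_1 = w$, forcing the surviving index $w'$ to satisfy $w' \le z_1^\vee$.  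Making this inclusion-exclusion precise, and verifying that "outside part is a rook strip" uses the distributive-lattice structure of $W^P$ and the fact that $\delta$ sends only the boxes $\al_1 < \al_2 < \cdots$ to $\ga$, is where the real work lies; everything else is bookkeeping with \Lemma{comin_nbhd}.
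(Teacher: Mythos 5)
Your overall strategy coincides with the paper's: for $u \geq z_1$ a sign-preserving bijection $w \mapsto w(-1)$ between the rook strips over $u$ and the terms of $\theta_1(\cO^u)$, and for $u \not\geq z_1$ a sign-reversing cancellation inside $\psi(\theta_0(\cO^u))$. But your cancellation step, as written, fails. You toggle the box $\al_1$, and $\al_1$ is the unique minimal box of all of $\cP_X$ (it equals $\ga$, and it does lie in $I(z_1)$), so $\al_1 \in I(u)$ for every $u \neq 1$. For such $u$ the map $w \mapsto w s_{\al_1}$ would have to remove a box belonging to $I(u)$, so $w s_{\al_1}$ is no longer $\geq u$ (and $I(w)\ssm\{\al_1\}$ need not even be a lower order ideal); hence it is not an involution of the index set of $\psi(\theta_0(\cO^u))$, and adding $\al_1$ is impossible since it is already present. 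Your justification that ``$u \not\geq z_1$ forces $I(u) \cap I(z_1)$ to miss $\al_1$'' is false: $u \not\geq z_1$ only says that \emph{some} box of $I(z_1)$ is missing from $I(u)$, not the box $\al_1$. The correct choice, which is exactly what the paper's proof does, is to let $\al$ be a minimal box of $I(z_1) \ssm I(u)$ (this set is nonempty precisely because $u \not\geq z_1$); such an $\al$ is automatically a minimal box of $\cP_X \ssm I(u)$, so toggling it preserves the rook-strip condition over $u$, flips the sign, and leaves $w(-1) = (w \cup z_1)/z_1$ unchanged because $\al \in I(z_1)$.

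Your alternative inclusion-exclusion over ``subsets of $I(z_1)\ssm I(u)$'' can be salvaged: rook strips over $u$ are exactly the subsets of the antichain of minimal boxes of $\cP_X \ssm I(u)$, and $w(-1)$ depends only on the boxes of such a subset lying outside $I(z_1)$, so the alternating sum over the boxes inside $I(z_1)$ vanishes unless no minimal box of $\cP_X\ssm I(u)$ lies in $I(z_1)$, i.e.\ unless $u \geq z_1$. However, you neither identify this indexing set correctly nor carry out the verification; you explicitly defer it as ``where the real work lies.'' Note also that in the case $u \geq z_1$ the set $I(z_1)\ssm I(u)$ is empty, so the cancellation you describe there is vacuous; the actual content of that case is the identities $w(-1) = w/z_1$ and $w(-1)/u(-1) = w/u$ (from \Lemma{comin_nbhd} and $I(z_1)\subset I(u)\subset I(w)$), which make $w \mapsto w(-1)$ a length-preserving bijection onto the rook strips $w'/u(-1)$ with $w' \leq z_1^\vee$, the upper bound being automatic from $w \leq 1^\vee$. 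As it stands, the proposal has a genuine error in the cancellation case and leaves the bijection case unverified.
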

\begin{proof}
  For any $u \in W^P$ we have
  \begin{equation}\label{eqn:thpsi}
    \psi(\theta_0(\cO^u)) = \sum_{w/u \text{ rook strip}}
    (-1)^{\ell(w/u)}\, \cO^{w(-1)} \,.
  \end{equation}
  If $u \geq z_1$ then we have $w(-1) = w/z_1$ and $w(-1)/u(-1) = w/u$
  for each element $w$ in the sum.  Since the map $w \mapsto w(-1)$
  gives a bijection between the terms of \eqn{thpsi} and the sum
  defining $\theta_1(\cO^u)$, the lemma follows in this case.  If
  $u \not\geq z_1$, then let $\al$ be a minimal box in the set
  $I(z_1) \ssm I(u)$.  Then the map $w \mapsto w s_\al$ defines an
  involution of the set of elements $w$ appearing in \eqn{thpsi}.
  Since we have $\ell(w s_\al) = \ell(w) \pm 1$ and the identity
  $w(-1) = (w \cup z_1)/z_1$ implies that $w(-1) = (w s_\al)(-1)$, we
  deduce that $\psi(\theta_0(\cO^u)) = 0$, as required.
\end{proof}

\begin{thm}\label{thm:qktchev}
  Let $X = G/P$ be cominuscule.  For $u \in W^P$ we have
  \[
  J \star \cO^u \ = \ J_u\, \theta_0(\phi(\cO^u)) \, - \,
  J_u\, q\, \theta_1(\phi(\cO^u))
  \]
  in $\QK_T(X)$.
\end{thm}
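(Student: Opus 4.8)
The plan is to combine the classical Chevalley formula (\Theorem{ktchev}) with the quantum-to-classical mechanism encoded in \Proposition{comin_prod} and \Lemma{theta_psi}. Recall that \Proposition{comin_prod} gives $\sigma_1 \star \sigma_2 = (1-q\psi)(\sigma_1 \odot \sigma_2)$ for $\sigma_1,\sigma_2 \in K^T(X)$. So the first step is to compute $J \odot \cO^u$, and then apply $(1-q\psi)$. For the classical part, since $\odot$ reduces at $q=0$ (i.e.\ degree $d=0$) to the ordinary product in $K^T(X)$, the constant term of $J \odot \cO^u$ is $J \cdot \cO^u = J_u\,\theta_0(\phi(\cO^u))$ by \Theorem{ktchev}. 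The real content is therefore to identify the higher-degree terms of $J \odot \cO^u$, equivalently the Gromov-Witten invariants $I_d(J,\cO^u,\cO^\vee_w)$ for $d \geq 1$.

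The key observation I would push is that $J \odot \cO^u$ can be computed directly without knowing its higher terms, because $(1-q\psi)\Psi = 1$ where $\Psi = 1+q\psi+q^2\psi^2+\cdots$; so equivalently $J \odot \cO^u = \Psi(J \star \cO^u)$, and it suffices to compute $\Psi$ applied to the claimed answer and check consistency — but that is circular. Instead, the honest route is: write $J \odot \cO^u = \sum_{d\ge 0} q^d (J \odot \cO^u)_d$ and show $(J \odot \cO^u)_d = \psi^d(J\cdot\cO^u)$ for all $d\ge 1$, i.e.\ that \emph{the degree $d$ part is obtained from the classical product by applying $\psi^d$}. Equivalently, using \Theorem{qclas} and writing $J = 1 - \cO^{s_\ga}$ and expanding $\cO^u = \sum_v a^u_v \cO_v$ in the $\cO_v$ basis is awkward; cleaner is to use that $J$ is the ideal sheaf class $J = \cO_X^\vee$-type object and argue geometrically that $\Gamma_d(X \cap \partial, X^u)$-type intersections collapse. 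The cleanest formulation: since $J\cdot\cO^u \in K^T(X)$, apply \Proposition{Psi}/\Proposition{comin_prod} in reverse. Actually the slick argument is this — by \Theorem{ktchev}, $J\cdot\cO^u = J_u\theta_0(\phi(\cO^u))$, so by \Proposition{comin_prod} it suffices to show
\[
J \odot \cO^u \ = \ J_u\, \Psi(\theta_0(\phi(\cO^u))) \ = \ J_u \sum_{d\ge 0} q^d\, \psi^d(\theta_0(\phi(\cO^u))).
\]
This is the identity to prove, and then applying $(1-q\psi)$ kills all terms with $d\ge 2$ and leaves $J_u\theta_0(\phi(\cO^u)) - q J_u \psi(\theta_0(\phi(\cO^u))) = J_u\theta_0(\phi(\cO^u)) - qJ_u\theta_1(\phi(\cO^u))$ by \Lemma{theta_psi}, which is exactly the claimed formula. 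Note $\psi$ commutes with scalars and $\psi\theta_0 = \theta_1$ but $\psi^2\theta_0 = \psi\theta_1$; one must also check $\psi\theta_1 = \psi^2\theta_0$ behaves, which it does tautologically.

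So the crux reduces to the geometric identity $J \odot \cO^u = J_u\,\Psi(\theta_0(\phi(\cO^u)))$, equivalently $I_d(J,\cO^u,\cO^\vee_w)$ matches the coefficient of $q^d\cO^w$ in $J_u\psi^d(\theta_0(\phi(\cO^u)))$. By $\Gamma$-linearity of $I_d$ in the first argument and the relation $I_d(\cO^\kappa,\cO^v,\cO^\vee_w) = \sum$(classical data via \Theorem{qclas} after dualizing), I would fix $d\geq 1$ and use that $\Gamma_d(\Omega)$ for $\Omega = X^u$ or $\Omega = X^u \cap X^{s_\ga}$ are again Schubert varieties, with $\Gamma_d$ of the divisor intersection controlled by $z_d$. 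The heart is the claim that $\psi^d$ applied to the classical answer reproduces the $\Gamma_d$-neighborhood geometry, which should follow from \Theorem{qclas} together with the fact that $\Gamma_d(X_u,X^v)$ factors through the curve neighborhood maps as in \Section{comin_nbhd} — specifically that $\cO_u \odot \cO^v = \sum_d [\cO_{\Gamma_d(X_u,X^v)}]q^d$ and that $\Gamma_d$ is computed by $z_d$ via \Lemma{comin_nbhd}.

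\textbf{Main obstacle.} The hard part will be the degree $d\geq 1$ geometric identity — showing that $J \odot \cO^u = J_u\Psi(\theta_0(\phi(\cO^u)))$, i.e.\ that the higher Gromov-Witten contributions are \emph{exactly} $\psi$ iterated on the classical Chevalley output. This requires understanding how the projected Gromov-Witten variety $\Gamma_d(X^{s_\ga} \cap X^u)$ (or the relevant curve neighborhood of the divisor intersected with $X^u$) decomposes, and in particular proving that the equivariant scalar $J_u$ — which a priori depends on $u$ — propagates correctly: one needs $J_{u} = J_{u(-d)}\cdot(\text{correction})$ to be compatible with the short-rook-strip square roots $\sqrt{J_uJ_w}$ after shifting by $z_d$, using \Lemma{Ju} and the labeling $\delta$. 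I would expect this to come down to a careful bookkeeping of weights $u.\omega_\ga$ versus $u(-d).\omega_\ga$ under the curve-neighborhood map, plus the combinatorial fact (from \Section{comin_nbhd}) that removing boxes in $I(z_d)$ and sliding to the corner is compatible with the rook-strip and short-rook-strip conditions — that is, $w/u$ a (short) rook strip forces $w(-d)/u(-d)$ to be one too, with the same $\delta$-sum modulo the known shift, which is precisely the content needed for $\psi$ to intertwine $\theta_0\circ\phi$ with itself degree by degree. This compatibility is where the fully-commutative and distributive-lattice structure of $W^P$ will be used essentially.
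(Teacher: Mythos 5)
Your reduction---compute $J \odot \cO^u$ and apply $(1-q\psi)$ via \Proposition{comin_prod}, then invoke \Theorem{ktchev} and \Lemma{theta_psi}---is exactly the right skeleton, but the identity you single out as the crux, $J \odot \cO^u = J_u\,\Psi(\theta_0(\phi(\cO^u)))$, is false, and the algebra you perform with it is inconsistent. Since $(1-q\psi)\Psi = 1$, applying $(1-q\psi)$ to $J_u\,\Psi(\theta_0(\phi(\cO^u)))$ kills \emph{all} terms of degree $\geq 1$ (not just $d \geq 2$) and returns $J_u\,\theta_0(\phi(\cO^u))$ with no quantum correction at all; so even granting your identity you would not obtain the stated formula. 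Running \Proposition{Psi} backwards from the statement of \Theorem{qktchev} shows what the correct intermediate claim must be: $J \odot \cO^u = \Psi(J \star \cO^u) = J_u\,\Psi(1-q\psi)(\theta_0(\phi(\cO^u))) = J_u\,\theta_0(\phi(\cO^u)) = J \cdot \cO^u$. In other words the positive-degree invariants $I_d(\sigma, J, \cO_w^\vee)$, $d \geq 1$, all vanish; there is no phenomenon of the form ``the degree-$d$ part of $J\odot\cO^u$ is $\psi^d$ of the classical part.'' Your proposed identity fails whenever $\theta_1(\phi(\cO^u)) \neq 0$, e.g.\ for the class $\cO^{w_\mu}$ in \Example{gr37}, where the quantum terms of $J\star\cO^{w_\mu}$ are visibly non-zero.

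The missing idea, which replaces the geometric program you sketch in your last paragraph (decomposing $\Gamma_d$ of divisor intersections, tracking $J_u$ versus $J_{u(-d)}$, compatibility of rook strips with the $z_d$-shift), is a one-line observation: every rational curve of positive degree in $X$ meets the Schubert divisor $X^{s_\ga}$, so $\Gamma_d(X_u, X^{s_\ga}) = \Gamma_d(X_u, X)$ for all $d \geq 1$. By \Theorem{qclas} this gives $I_d(\cO_u, J, \cO_w^\vee) = I_d(\cO_u, 1, \cO_w^\vee) - I_d(\cO_u, \cO^{s_\ga}, \cO_w^\vee) = 0$ for $d \geq 1$, and by $\Gamma$-linearity in the first argument (the classes $\cO_u$ form a basis of $K^T(X)$) one gets $I_d(\sigma, J, \cO_w^\vee) = 0$ for every $\sigma$, i.e.\ $J \odot \cO^u = J \cdot \cO^u$. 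Applying $(1-q\psi)$ to $J \cdot \cO^u = J_u\,\theta_0(\phi(\cO^u))$ and using $\psi\,\theta_0 = \theta_1$ from \Lemma{theta_psi} then yields the theorem directly; none of the weight bookkeeping or distributive-lattice combinatorics you anticipate is needed at this step.
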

\begin{proof}
  Since all curves of positive degree in $X$ meet the Schubert divisor
  $X^{s_\ga}$, we have
  $\Gamma_d(X_u, X^{s_\ga}) = \Gamma_d(X_u) = \Gamma_d(X_u,X)$ for all
  $d \geq 1$.  \Theorem{qclas} therefore implies that
  \[
  I_d(\cO_u, J, \cO_w^\vee) \ = \ I_d(\cO_u,1,\cO_w^\vee) -
  I_d(\cO_u,\cO^{s_\ga},\cO_w^\vee) \ = \ 0
  \]
  for all $u, w \in W^P$ and $d \geq 1$.  By linearity we obtain
  $I_d(\sigma, J, \cO_w^\vee) = 0$ for all $\sigma \in K^T(X)$ and
  $d \geq 1$, which is equivalent to the identity
  $\sigma \odot J = \sigma \cdot J$.  We therefore obtain
  \[
  \begin{split}
    J \star \cO^u \
    &= \ (1 - q \psi)(J \odot \cO^u)
    \ = \ (1 - q \psi)(J \cdot \cO^u) \\
    &= \ (1 - q \psi)(J_u\, \theta_0(\phi(\cO^u)))
    \ = \ J_u\, \theta_0(\phi(\cO^u)) - J_u\, q\,
    \psi(\theta_0(\phi(\cO^u)))
  \end{split}
  \]
  so the result follows from \Lemma{theta_psi}.
\end{proof}

\subsection{Structure constants of the Chevalley formula}

Recall that a box $\al$ of a skew shape $I(w) \ssm I(u)$ is {\em
  incomparable\/} if it is both a minimal and maximal box within this
skew shape.  \Theorem{qktchev} can be restated as follows.

\begin{cor}\label{cor:chev_const}
  Let $X = G/P$ be cominuscule.  For $u \in W^P$ we have
  \[
  \cO^{s_\ga} \star \cO^u = \sum_{w \in W^P} N^{w,0}_{s_\ga,u}\, \cO^w
  + \sum_{w \in W^P} N^{w,1}_{s_\ga,u}\, q\, \cO^w
  \]
  where the constants are determined by $N^{u,0}_{s_\ga,u} = 1-J_u$
  and the following rules.

  For $u\neq w$ the constant $N^{w,0}_{s_\ga,u}$ is non-zero if and
  only if $u < w$ and all non-maximal boxes of the skew shape
  $I(w)\ssm I(u)$ are short minimal boxes.  In this~case
  \begin{equation}\label{eqn:Nw0su}
    N^{w,0}_{s_\ga,u} \ = \
    (-1)^{\ell(w/u)-1}\, J_u
    \left( \prod_\al [\C_{-\delta(\al)}] \right)
    \left( \prod_{\al'} (1 + [\C_{-\delta(\al')}]) \right)
  \end{equation}
  where the first product is over all non-maximal boxes $\al$ of
  $I(w) \ssm I(u)$, and the second product is over all short
  incomparable boxes $\al'$ of $I(w)\ssm I(u)$.

  The constant $N^{w,1}_{s_\ga,u}$ is non-zero if and only if
  $u(-1) \leq w \leq z_1^\vee$, all non-maximal boxes of the skew
  shape $I(w(1))\ssm I(u)$ are short minimal boxes, and all roots of
  $I(z_1)\ssm I(u)$ are short minimal boxes of $I(w(1))\ssm I(u)$.  In
  this case we have
  \begin{equation}\label{eqn:Nw1su}
    N^{w,1}_{s_\ga,u} \ = \
    (-1)^{\ell(w(1)/u)}\, J_u
    \left( \prod_\al [\C_{-\delta(\al)}] \right)
    \left( \prod_{\al'} (1 + [\C_{-\delta(\al')}]) \right)
  \end{equation}
  where the first product is over all boxes $\al$ of
  $I(w(1))\ssm I(u)$ that are either non-maximal or belong to
  $I(z_1)$, and the second product is over all short incomparable
  boxes $\al'$ of $I(w(1))\ssm I(u)$ for which $\al' \notin I(z_1)$.
\end{cor}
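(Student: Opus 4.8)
The plan is to read off the structure constants from \Theorem{qktchev}. Since $\cO^{s_\ga}=1-J$, that theorem yields
\[
\cO^{s_\ga}\star\cO^u \ = \ \cO^u \,-\, J_u\,\theta_0(\phi(\cO^u)) \,+\, J_u\,q\,\theta_1(\phi(\cO^u)) \,,
\]
so $N^{w,0}_{s_\ga,u}$ is $\delta_{u,w}$ minus $J_u$ times the coefficient of $\cO^w$ in $\theta_0(\phi(\cO^u))$, while $N^{w,1}_{s_\ga,u}$ is $J_u$ times the coefficient of $\cO^w$ in $\theta_1(\phi(\cO^u))$. The entire proof reduces to expanding these two coefficients from the definitions of $\phi$, $\theta_0$, $\theta_1$.

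For the classical term, fix $w$ and expand the composite: the coefficient of $\cO^w$ in $\theta_0(\phi(\cO^u))$ equals $\sum_v (-1)^{\ell(v/u)+\ell(w/v)}\,[\C_{-\delta(v/u)}]$, the sum over $v\in W^P$ with $u\le v\le w$, $v/u$ a short rook strip and $w/v$ a rook strip. Additivity of length makes the sign the constant $(-1)^{\ell(w/u)}$. The combinatorial core is to describe the admissible intermediate ideals $I(v)$: since $I(v)\ssm I(u)$ and $I(w)\ssm I(v)$ are both antichains, each non-maximal box of the skew shape $I(w)\ssm I(u)$ is forced into $I(v)$ and each non-minimal box is forced out of $I(v)$; this is possible precisely when every box of the skew shape is minimal or maximal (equivalently $u<w$ and all non-maximal boxes are minimal), and the non-maximal boxes can be adjoined to $I(u)$ as a short rook strip precisely when they are short. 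The remaining short incomparable boxes may independently be included in $I(v)$ or not, and one verifies that every such choice yields a genuine element $v\in W^P$ with all required properties. Summing this product of binary choices gives the coefficient $(-1)^{\ell(w/u)}\bigl(\prod_\al[\C_{-\delta(\al)}]\bigr)\bigl(\prod_{\al'}(1+[\C_{-\delta(\al')}])\bigr)$, the products ranging over non-maximal boxes $\al$ and short incomparable boxes $\al'$ of $I(w)\ssm I(u)$; this is \eqn{Nw0su}. For $w=u$ only $v=w=u$ survives and one gets $N^{u,0}_{s_\ga,u}=1-J_u$.

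For the quantum term I would run the same scheme one step higher. Writing $\phi(\cO^u)=\sum_v(-1)^{\ell(v/u)}[\C_{-\delta(v/u)}]\cO^v$ over short rook strips $v/u$, and applying the definition of $\theta_1$ (equivalently \Lemma{theta_psi}), only the summands with $v\ge z_1$ contribute, and each is expanded over $w$ with $v(-1)\le w\le z_1^\vee$ and $w/v(-1)$ a rook strip; so the coefficient of $\cO^w$ in $\theta_1(\phi(\cO^u))$ is $\sum_v(-1)^{\ell(v/u)+\ell(w/v(-1))}[\C_{-\delta(v/u)}]$ over such $v$. Since $v\ge z_1$, \Lemma{comin_nbhd} gives $v=v(-1)\,z_1$ as a reduced product, hence $\ell(v)=\ell(v(-1))+\ell(z_1)$ and the sign telescopes to the constant $(-1)^{\ell(w(1)/u)}$; moreover, for $w\le z_1^\vee$ one checks $v(-1)\le w\iff v\le w(1)$ and that $w/v(-1)$ and $w(1)/v$ are the same element of $W$, so the constraints on $v$ become $I(u)\cup I(z_1)\subseteq I(v)\subseteq I(w(1))$ with $I(v)\ssm I(u)$ an antichain of short boxes and $I(w(1))\ssm I(v)$ an antichain. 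The forced/free analysis now runs on the skew shape $I(w(1))\ssm I(u)$: its non-maximal boxes together with the boxes of $I(z_1)\ssm I(u)$ — which lie in $I(v)$ for every $v\ge z_1$ — are forced into $I(v)$, hence must be short with the non-maximal ones minimal (this gives exactly the stated nonvanishing condition, along with $u(-1)\le w\le z_1^\vee$, which is what makes an admissible $v$ exist), whereas the short incomparable boxes of $I(w(1))\ssm I(u)$ lying outside $I(z_1)$ remain free. Summing over the free choices produces \eqn{Nw1su}.

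The step I expect to be the main obstacle is precisely this translation between the curve-neighborhood operations $v\mapsto v(\pm1)$ and the box/skew-shape language: verifying that $v\ge z_1$ is equivalent to $I(z_1)\ssm I(u)\subseteq I(v)\ssm I(u)$, that being a rook strip is intrinsic to the element $w/v(-1)=w(1)/v$ so that it transfers to the antichain condition on $I(w(1))\ssm I(v)$, and that each admissible selection of the free boxes does produce an element of $W^P$ satisfying all constraints (and, conversely, that every contributing $v$ arises this way). Once these compatibilities are in hand from \Lemma{comin_nbhd} and the combinatorics of \Section{comin_nbhd}, the sign telescoping and the factorization of each sum into the two displayed products are routine, and the corollary follows by matching with \eqn{Nw0su} and \eqn{Nw1su}.
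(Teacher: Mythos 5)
Your proposal is correct and follows essentially the same route as the paper: unwind \Theorem{qktchev}, expand the composites $\theta_0(\phi(\cO^u))$ and $q\,\theta_1(\phi(\cO^u))$ as sums over intermediate elements $v$, use \Lemma{comin_nbhd} to translate $v\ge z_1$, $v(-1)\le w\le z_1^\vee$ and $w/v(-1)=w(1)/v$ into the containments $I(u)\cup I(z_1)\subseteq I(v)\subseteq I(w(1))$, and then do the forced/free box analysis whose binary choices factor into the two displayed products, with the telescoping sign. This matches the paper's proof (which works out the quantum case and leaves the classical case to the reader) in both structure and level of detail.
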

\begin{proof}
  \Theorem{qktchev} implies that the coefficient of $q\,\cO^w$ in
  $J \star \cO^u$ is given by
  \begin{equation}\label{eqn:Nw1su_sum}
    - N^{w,1}_{s_\ga,u} \ = \ (-1)^{\ell(w)+\ell(z_1)+1-\ell(u)}\, J_u\,
    \sum_v\, [\C_{-\delta(v/u)}]
  \end{equation}
  where the sum is over all $v \in W^P$ for which $\cO^v$ appears in
  $\phi(\cO^u)$ and $\cO^w$ appears in $\theta_1(\cO^v)$.  The first
  condition says that $u \leq v$ and $v/u$ is a short rook strip, and
  by \Lemma{comin_nbhd} the second condition holds if and only if
  $w \leq z_1^\vee$, $z_1 \leq v \leq w(1)$, and $w(1)/v$ is an
  arbitrary rook strip.  Such elements $v$ exist if and only if the
  conditions for $N^{w,1}_{s_\ga,u}\neq 0$ in the corollary are
  satisfied, and in this case $v$ appears in the sum if and only if
  $I(v)$ is the union of $I(u)$, the set of all roots $\al$ in the
  first product of \eqn{Nw1su}, and an arbitrary subset of the roots
  $\al'$ in the second product.  The identity \eqn{Nw1su} now follows
  because the last two products in this identity expand to the sum in
  equation \eqn{Nw1su_sum}.  The proof of \eqn{Nw0su} is similar and
  left to the reader.
\end{proof}

\begin{remark}\label{remark:slaced}
  Assume that the root system $R$ is simply laced.  Then there are no
  short boxes, and the statement of \Corollary{chev_const} simplifies
  considerably.  Any coefficient $N^{w,0}_{s_\ga,u}$ with $w \neq u$
  is non-zero if and only if $u < w$ and $w/u$ is a rook strip, in
  which case
  \[
  N^{w,0}_{s_\ga,u} \ = \ (-1)^{\ell(w/u)-1}\, J_u \,.
  \]
  Similarly, $N^{w,1}_{s_\gamma,u}$ is non-zero if and only if
  $u(-1) \leq w \leq z_1^\vee$, $z_1 \leq u$, and $w(1)/u$ is a rook
  strip.  In this case we have
  \[
  N^{w,1}_{s_\ga,u} \ = \ (-1)^{\ell(w(1)/u)}\, J_u \,.
  \]
\end{remark}

We proceed to give two examples of the Chevalley formula in types A
and C.  More examples can be obtained using the {\em Equivariant
  Schubert Calculator} available at
\url{http://sites.math.rutgers.edu/~asbuch/equivcalc/}.

\begin{example}\label{example:gr37}
  Let $X = \Gr(3,7) = \{ V \subset \C^7 \mid \dim(V)=3 \}$ be the
  Grassmann variety of $3$-planes in $\C^7$.  This is a homogeneous
  space for the group $G = \SL(\C^7)$ of Lie type ${\rm A}_6$.  Let
  $T \subset G$ be the maximal torus of diagonal matrices and let
  $B \subset G$ be the Borel subgroup of upper triangular matrices.
  For $1 \leq i \leq 7$ we let $\vep_i : T \to \C^*$ be the character
  that sends a diagonal matrix to its $i$-th entry.  Then the set of
  simple roots of $G$ is
  $\Delta = \{ \vep_i - \vep_{i+1} \mid 1 \leq i \leq 6 \}$, and all
  these simple roots are cominuscule.  We can identify the homogeneous
  space $G/P$ corresponding to $\ga = \vep_3-\vep_4$ with the
  Grassmannian $X$ by the map $g.P \mapsto g.E_3$, where
  $E_3 = \Span\{e_1,e_2,e_3\}$ is the span of the first three standard
  basis vectors in $\C^7$.  The partially ordered set $\cP_X$ consists
  of the boxes of the following $3 \times 4$ rectangle.
  \[
  \def\sroot#1#2{\vep_{#1}\!\!-\!\vep_{#2}}%
  \tableau{29}{
  {\ga}&{\sroot35}&{\sroot36}&{\sroot37}\\
  {\sroot24}&{\sroot25}&{\sroot26}&{\sroot27}\\
  {\sroot14}&{\sroot15}&{\sroot16}&{\sroot17}}
  \]
  The partial order is given by $\vep_i-\vep_j \leq \vep_k-\vep_l$ if
  and only if $k \leq i$ and $j \leq l$, or equivalently, the box
  $\vep_i-\vep_j$ is north-west of $\vep_k-\vep_l$.  The order ideals
  of $\cP_X$ therefore correspond to Young diagrams $\la$ contained in
  the upper-left corner of the rectangle.  Notice that
  $\delta(\vep_i-\vep_j) = \vep_{i+j-4}-\vep_{i+j-3}$.  Since we have
  $I(z_1) = \tableau{5}{{}&{}&{}&{}\\{}\\{}}$, the map
  $w \mapsto w(-1)$ corresponds to the map of Young diagrams that
  removes the first row and the first column, and moves the remaining
  boxes one step north-west.  Similarly, the map $w \mapsto w(1)$
  corresponds to the map of Young diagrams that moves all boxes one
  step south-east, discards any boxes that leave the $3 \times 4$
  rectangle, and adds the boxes of $I(z_1)$ to the result.  If
  $w \le z_1^\vee$ then no boxes need to be discarded; equivalently we
  have $w(1)(-1) = w$.

  Consider the diagram $\mu = \tableau{5}{{}&{}&{}&{}\\{}&{}&{}\\{}}$,
  and let $w_\mu \in W^P$ be the corresponding Weyl group element.
  Using \Lemma{Ju} we obtain
  $J_{w_\mu} = [\C_{\vep_7+\vep_5-\vep_3-\vep_1}]$.  We will abuse
  notation and denote the Schubert class $\cO^{w_\la}$ simply by
  $\la$.  Since $\phi(\cO^{w_\mu}) = \cO^{w_\mu}$ and
  $I(w_\mu(-1)) = \tableau{5}{{}&{}}\,$, it follows from
  \Theorem{qktchev} that
  \[
  J \star\, \tableau{5}{{}&{}&{}&{}\\{}&{}&{}\\{}} \ = \
  J_{w_\mu} \left(\,
    \tableau{5}{{}&{}&{}&{}\\{}&{}&{}\\{}}
    - \tableau{5}{{}&{}&{}&{}\\{}&{}&{}&{}\\{}}
    - \tableau{5}{{}&{}&{}&{}\\{}&{}&{}\\{}&{}}
    + \tableau{5}{{}&{}&{}&{}\\{}&{}&{}&{}\\{}&{}}
    - q\, \tableau{5}{{}&{}}
    + q\, \tableau{5}{{}&{}&{}}
    + q\, \tableau{5}{{}&{}\\{}}
    - q\, \tableau{5}{{}&{}&{}\\{}}
    \,\right)
  \,.
  \]
  Equivalently, the expansion of the product
  $\cO^{s_\ga} \star \cO^{w_\mu} = \cO^{w_\mu} - J \star \cO^{w_\mu}$
  can be obtained using \Remark{slaced}.
\end{example}

\begin{example}[Lagrangian Grassmannian]\label{example:LG}
  Set $E = \C^{2n}$, let $\{e_1,e_2,\dots,e_{2n}\}$ be the standard
  basis, and define a symplectic form on $E$ by
  $(e_i,e_j) = -(e_j,e_i) = \delta_{i+j,2n+1}$ for
  $1 \leq i \leq j \leq 2n$.  The Lagrangian Grassmannian of maximal
  isotropic subspaces of $E$ is the variety
  \[
  X = \LG(n,E) = \{ V \subset E \mid \dim(V) = n \text{ and }
  (V,V)=0 \} \,.
  \]
  This is a homogeneous space for the symplectic group
  $G = \Sp(E) = \{ g \in \GL(E) \mid (g.x,g.y) = (x,y) ~\forall x,y
  \in E \}$
  of Lie type ${\rm C}_n$.  Let $T \subset G$ be the maximal torus of
  diagonal matrices, and let $B \subset G$ be the Borel subgroup of
  upper triangular matrices.  For $1 \leq i \leq 2n$ we let
  $\vep_i : T \to \C^*$ be the character that sends a diagonal matrix
  to its $i$-th entry.  Since all elements of $T$ have the form
  $\diag(t_1,\dots,t_n, t_n^{-1},\dots,t_1^{-1})$, we have
  $\vep_{2n+1-i} = -\vep_i$ for each $i$.  The set of simple roots of
  $G$ is
  $\Delta = \{ \vep_1-\vep_2, \vep_2-\vep_3, \dots, \vep_{n-1}-\vep_n,
  2\vep_n \}$.
  This set contains a single cominuscule simple root $\ga = 2 \vep_n$.
  The corresponding homogeneous space $G/P$ can be identified with $X$
  by the map $g.P \mapsto g.E_n$, where
  $E_n = \Span\{e_1,\dots,e_n\}$.  The partially ordered set $\cP_X$
  consists of the boxes in the staircase diagram
  \[
  \def\sroot#1#2{\vep_{#1}\!\!+\!\vep_{#2}}%
  \tableau{29}{
    \ga&\hdots&\sroot3n&\sroot2n&\sroot1n\\
    &\ddots&\vdots&\vdots&\vdots\\
    &&2\vep_3&\sroot23&\sroot13\\
    &&&2\vep_2&\sroot12\\
    &&&&2\vep_1
  } \hspace{5pt}\mbox{}
  \]
  where boxes increase from north-west to south-east as in
  \Example{gr37}.  The long roots of $\cP_X$ are the $n$ diagonal
  boxes.  Each element $u \in W^P$ corresponds to a shifted Young
  diagram $I(u)$ of boxes in the upper-left corner of the staircase
  diagram.  Since $I(z_1)$ is the top row of boxes in $\cP_X$, the map
  $w \mapsto w(-1)$ corresponds to the map of shifted Young diagrams
  that removes the top row and moves the remaining boxes one step
  north-west.  The map $w \mapsto w(1)$ corresponds to the map of
  shifted Young diagrams that moves all boxes one step south-east,
  discards the boxes that leave the staircase diagram $\cP_X$, and
  adds the boxes of $I(z_1)$ to the result.  If $I(w)$ does not
  contain boxes in the right-most column of $\cP_X$, then no boxes are
  discarded.

  To illustrate the Chevalley formula, we consider the case $n=5$ and
  $\mu = \tableau{5}{{}&{}&{}&{}\\&{}&{}}\,$.  If we denote each
  Schubert class $\cO^{w_\la}$ by $\la$ as above, then we have
  \[
  \phi(\,\tableau{5}{{}&{}&{}&{}\\&{}&{}}\,) \ = \
  \tableau{5}{{}&{}&{}&{}\\&{}&{}} \, - \, [\C_{\vep_2 - \vep_1}]\,
  \tableau{5}{{}&{}&{}&{}&{}\\&{}&{}} \, - \, [ \C_{\vep_4 - \vep_3}]\,
  \tableau{5}{{}&{}&{}&{}\\&{}&{}&{}} \, + \, [\C_{\vep_4 - \vep_3 +
    \vep_2 - \vep_1}]\,\tableau{5}{{}&{}&{}&{}&{}\\&{}&{}&{}} \,.
  \]
  \Lemma{Ju} gives $J_{w_\mu} = [\C_{-2(\vep_2+\vep_4)}]$, and
  \Theorem{qktchev} shows that the quantum terms of the product
  $J \star \cO^{w_\mu}$ are given by
  $- J_{w_\mu} q\, \theta_1(\phi(\cO^{w_\mu}))$, which expands to
  \[
  q [\C_{-\vep_1-\vep_2-2\vep_4}]\, (
  \tableau{5}{{}&{}} -
  \tableau{5}{{}&{}&{}} -
  \tableau{5}{{}&{}\\&{}}+
  \tableau{5}{{}&{}&{}\\&{}}
  )
  - q [\C_{-\vep_1-\vep_2-\vep_3-\vep_4}] (
  \tableau{5}{{}&{}&{}} -
  \tableau{5}{{}&{}&{}&{}} -
  \tableau{5}{{}&{}&{}\\&{}} +
  \tableau{5}{{}&{}&{}&{}\\&{}}
  ) \,.
  \]
  Alternatively we can use \Corollary{chev_const} to compute the
  coefficients $N^{w_\nu,1}_{s_\ga,w_\mu}$ in the expansion of
  $\cO^{s_\ga} \star \cO^{w_\mu}$.  For example, for
  $\nu = \tableau{5}{{}&{}&{}\\&{}}$ we obtain
  $I(w_\nu(1)) = \tableau{5}{{}&{}&{}&{}&{}\\&{}&{}&{}\\&&{}}$, and
  noting that $I(z_1) \ssm \mu$ consists of the single short box
  $\vep_1+\vep_5$, we obtain
  \[
  N^{w_\nu,1}_{s_\ga,w_\mu} \ = \ (-1)^3\, J_u\, [\C_{\vep_2-\vep_1}]\,
  (1 + [\C_{\vep_4-\vep_3}]) \,.
  \]
\end{example}

\section{Geometric proof of the Chevalley formula}
\label{sec:chevproof}

\subsection{Minuscule varieties}\label{sec:minuscule}

This section contains a geometric proof of \Theorem{ktchev}.  As in
the previous section we assume that $X = G/P$ is a cominuscule
variety, and $\ga \in \Delta$ is the corresponding cominuscule simple
root.  While our proof is very simple when the root system $\Phi$ is
simply laced, we need to resort to specialized arguments in the
non-simply laced cases.

We may assume without loss of generality that $G$ is simply connected,
so that any integral weight of the root system $\Phi$ is represented
by a character of $T$.  The fundamental weight $\omega_\ga$ extends to
a character $\omega_\ga : P \to \C^*$.  Let
$L = G \times^P \C_{-\omega_\ga}$ be the associated line bundle on
$X$.  The total space of this bundle consists of all pairs $[g,z]$
with $g \in G$ and $z \in \C$, subject to the relation
$[gp,\omega_\ga(p)z] = [g,z]$ for all $p \in P$.  We regard $L$ as a
$G$-equivariant line bundle with action defined by
$g'.[g,z] = [g'g,z]$.  In particular $G$ acts on the vector space of
global sections $H^0(X,L)$.  According to the Borel-Weil theorem,
$H^0(X,L)^*$ is an irreducible representation of $G$ with highest
weight $\omega_\ga$.  It follows that there exists a $B^-$-stable
section $\sigma \in H^0(X,L)$ of weight $-\omega_\ga$.  Given any
element $u \in W$, we will misuse notation and write
$u.\sigma = \Dot{u}.\sigma$, where $\Dot{u} \in N_G(T)$ is a
representative of $u$.  The section $u.\sigma \in H^0(X,L)$ is well
defined only up to scalar.

\begin{prop}\label{prop:boundary}
  Let $u \in W^P$.  The support of $Z(u.\sigma) \cap X^u$ is the
  boundary $\partial X^u$.  If all minimal boxes of $\cP_X \ssm I(u)$
  are long roots, then $Z(u.\sigma) \cap X^u = \partial X^u$ as
  (reduced) subschemes of $X^u$.  In particular, if $X$ is a minuscule
  variety, then the boundary $\partial X^u$ is a Cartier divisor in
  $X^u$.
\end{prop}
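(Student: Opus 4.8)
The plan is to analyze the zero scheme $Z(u.\sigma)$ of the section $u.\sigma \in H^0(X,L)$ by restricting to the Schubert cell decomposition of $X^u$ and tracking $T$-fixed points. First I would observe that $u.\sigma$ is a $B^-$-stable section up to scalar (since $\sigma$ is $B^-$-stable of weight $-\omega_\ga$, the element $\Dot u$ moves it to a $\Dot u B^- \Dot u^{-1}$-eigenvector), so its zero scheme $Z(u.\sigma)$ is a $\Dot u B^- \Dot u^{-1}$-stable Cartier divisor on $X$. In particular $Z(u.\sigma) \cap X^u$ is a closed subscheme of $X^u$ that is stable under the torus $T$. To identify its support, I would use \Lemma{tstable}: it suffices to determine which $T$-fixed points $v.P$ with $v \in W^P$, $v \ge u$ lie in $Z(u.\sigma)$, because every $T$-fixed point of $X^u$ has an open $B^-$-neighborhood (or a $B$-neighborhood after translating) meeting the relevant cell, and conversely an irreducible $T$-stable closed subset is the closure of a union of cells through its fixed points. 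The value of $u.\sigma$ at $v.P$ is a section of the fiber $L_{v.P} = \C_{-v.\omega_\ga}$, and this value is nonzero precisely when $v = u$ (the highest term) and zero otherwise; this is exactly the statement that the vanishing locus of the lowest weight vector passes through all fixed points except the extreme one. Hence $\operatorname{supp}(Z(u.\sigma) \cap X^u) = \bigcup_{v > u,\, v \in W^P} \ov{B^- v.P} \cap X^u = X^u \ssm B^- u.P = \partial X^u$, proving the first assertion.

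For the scheme-theoretic statement under the hypothesis that all minimal boxes of $\cP_X \ssm I(u)$ are long, the key point is local: I would work in the affine neighborhood $\Dot u.(U^- . P) \subset X^u$ of the point $u.P$, where $U^-$ is the unipotent radical of $B^-$ and $U^- . P \cong \prod_{\al \in \cP_X} U_{-\al}$ via root subgroups, with coordinate functions indexed by the boxes $\al \in \cP_X$. In these coordinates the local equation of $Z(u.\sigma) \cap X^u$ is a polynomial whose linear part picks out exactly the coordinates corresponding to the minimal boxes $\al$ of $\cP_X \ssm I(u)$, with coefficient a nonzero scalar times $(\omega_\ga,\al^\vee)$ — and $(\omega_\ga,\al^\vee)=1$ precisely when $\al$ is long (recall $\ga$ is long and $\Phi$ is not of type ${\rm G}_2$, so long boxes pair to $1$ and short boxes pair to $2$ with $\omega_\ga$). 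When every minimal box of the complement is long, this local equation has nonzero linear part and in fact cuts out the reduced boundary $\partial X^u$, which near $u.P$ is the union of the coordinate hyperplanes attached to those minimal boxes; comparing with \Theorem{ktchev}'s description of $\theta_0$ (or directly with \eqn{brion_dual}), the scheme $Z(u.\sigma)\cap X^u$ is reduced of the expected codimension $1$. Since both $Z(u.\sigma)\cap X^u$ and $\partial X^u$ are $\Dot u B^- \Dot u^{-1}$-stable with the same support and agree (reducedly) in this one neighborhood, translating by the group forces equality of subschemes everywhere.

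For the minuscule case: when $X$ is minuscule the root system is simply laced, so \emph{all} roots are long, hence the hypothesis of the previous paragraph holds for every $u \in W^P$. Therefore $Z(u.\sigma) \cap X^u = \partial X^u$ is cut out by a single section of the line bundle $L|_{X^u}$, i.e.\ it is a Cartier divisor in $X^u$, as claimed.

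The main obstacle I expect is the local computation of the linear part of the defining equation — pinning down that the coefficient attached to each minimal box $\al$ of $\cP_X \ssm I(u)$ is exactly $(\omega_\ga, \al^\vee)$ (up to a unit), which is what distinguishes the long-box case (reduced, Cartier) from the short-box case (the section vanishes to order two along that coordinate direction, so the reduced boundary is only a Weil divisor). This requires a careful choice of $\SL_2$-triple or root-subgroup parametrization adapted to each minimal box and an explicit evaluation of how $\sigma$ transforms; everything else is formal manipulation with Schubert cells, \Lemma{tstable}, and $B^-$-equivariance.
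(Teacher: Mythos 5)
Two steps in your argument have genuine gaps. For the support statement, the inclusion $Z(u.\sigma)\cap X^u\subset\partial X^u$ does follow from \Lemma{tstable} as you say, and your fixed-point computation (the value of $u.\sigma$ at $v.P$ is nonzero exactly for $v=u$) is correct; but the reverse inclusion does not follow from it. The principle you invoke---that an irreducible $T$-stable closed subset is the closure of the cells through its $T$-fixed points---is false ($T$-stable subvarieties need not be unions of $B^-$-orbits), and $Z(u.\sigma)\cap X^u$ is only $T$-stable, being the intersection of a $\Dot{u}B^-\Dot{u}^{-1}$-stable set with a $B^-$-stable one. Knowing that every $v.P$ with $v>u$ lies in $Z(u.\sigma)$ does not by itself put the divisors $X^{us_\al}$ inside $Z(u.\sigma)$. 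This is repairable (the zero scheme of $\tau=(u.\sigma)|_{X^u}$ is a nonzero Cartier divisor, hence pure of codimension one with components among the $X^w$ for $w$ covering $u$, and containing each fixed point $w.P$ then forces every such $X^w$ to occur), but the paper instead reads off both the support and the multiplicities from the classical Chevalley formula: $[Z(\tau)]=\sum_\al(\omega_\ga,\al^\vee)\,[X^{us_\al}]$, summed over the minimal boxes $\al$ of $\cP_X\ssm I(u)$, with every coefficient equal to $1$ (long $\al$) or $2$ (short $\al$), hence positive.

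More seriously, your local computation takes place where nothing happens. Every neighborhood of $u.P$ in $X^u$ is disjoint from both schemes being compared: the open cell $B^-u.P$ is by definition the complement of $\partial X^u$, and $Z(u.\sigma)$ is set-theoretically $\Dot{u}.X^{s_\ga}$, which is exactly the complement of the translated big cell $\Dot{u}.(U^-.P)$ (a set that, incidentally, is open in $X$ and not contained in $X^u$). In your chart $u.\sigma$ is therefore a unit, and there is no linear part ``picking out the minimal boxes'' to compute; the reducedness question lives along the boundary divisors $X^{us_\al}$, which do not meet the chart. One must either work at (generic points of) those divisors, or argue globally as the paper does: the Chevalley coefficient $(\omega_\ga,\al^\vee)$ is the order of vanishing of $\tau$ along $X^{us_\al}$, so when all minimal boxes are long the vanishing is of first order along every component, and normality of $X^u$ (a normal ring is the intersection of its localizations at height-one primes) upgrades generic reducedness to reducedness of the whole zero scheme. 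Your substitute for this last step---that both schemes are $\Dot{u}B^-\Dot{u}^{-1}$-stable with the same support and agree on one neighborhood, so one can translate---does not work: $\partial X^u$ is $B^-$-stable but not $\Dot{u}B^-\Dot{u}^{-1}$-stable, the intersection is only $T$-stable, and agreement near $u.P$ is vacuous for the reason above. Finally, the parenthetical appeal to \Theorem{ktchev} would be circular, since \Proposition{boundary} is an ingredient in its proof (using \eqn{brion_dual} is fine); your values of $(\omega_\ga,\al^\vee)$ and the minuscule/Cartier conclusion are correct once the scheme equality is established.
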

\begin{proof}
  Since $\sigma$ is a $B^-$-stable section of $L$, it follows that the
  zero section $Z(\sigma)$ is a $B^-$-stable divisor, so we have
  $Z(\sigma) = X^{s_\ga}$ as a set.  It follows that
  $1.P \notin Z(\sigma)$, and hence $u.P \notin Z(u.\sigma)$, so
  \Lemma{tstable} implies that
  $Z(u.\sigma) \cap X^u \subset \partial X^u$.  Let
  $\tau \in H^0(X^u, L|_{X^u})$ denote the section obtained by
  restricting $u.\sigma$ to $X^u$.  The cycle $[Z(\tau)]$ is a sum of
  prime divisors of $X^u$ contained in the boundary $\partial X^u$.
  These prime divisors are exactly the Schubert varieties $X^w$ for
  which $w$ covers $u$ in the Bruhat order on $W^P$, i.e.\ $u \leq w$
  and $\ell(w)=\ell(u)+1$.  Furthermore, the classes $[X^w]$ of these
  prime divisors are linearly independent in $H_*(X^u;\Z)$ as they map
  to linearly independent classes in $H_*(X;\Z)$.  For each
  $w \in W^P$ that covers $u$ we have $I(w) \ssm I(u) = \{\al\}$ for
  some root $\al \in \cP_X$ such that $w = u s_\al$, and it follows
  from the classical Chevalley formula \cite{chevalley:decompositions}
  that the coefficient of $[X^w]$ in $[Z(\tau)]$ is equal to
  $(\omega_\ga,\al^\vee)$.  Since the cominuscule root $\ga$ is long,
  and since the coefficient of $\ga$ in $\al$ is one, it follows that
  $(\omega_\ga,\al^\vee)$ is equal to 1 if $\al$ is long and equal to
  2 if $\al$ is short.  In particular, $\tau$ vanishes along each
  Schubert divisor contained in $\partial X^u$, so we have
  $Z(u.\sigma) \cap X^u = Z(\tau) = \partial X^u$ as sets.  If all
  minimal boxes of $\cP_X \ssm I(u)$ are long roots, then $\tau$
  vanishes to the first order along each prime divisor
  $X^w \subset X^u$.  In other words the ideal of $Z(\tau)$ agrees
  with the maximal ideal in each discrete valuation ring
  $\cO_{X^w,X^u}$.  By using that $X^u$ is normal and the fact that
  any normal ring is the intersection of its localizations at prime
  ideals of codimension one, we deduce that $Z(\tau)$ is a reduced
  subscheme of $X^u$.  This completes the proof.
\end{proof}

It follows from \Proposition{boundary} that $X^{s_\ga} = Z(\sigma)$ as
a subscheme of $X$.  Since $\sigma$ has weight $-\omega_\ga$ in
$H^0(X,L)$, we may consider $\sigma$ as a morphism
$\sigma : X \times \C_{-\omega_\ga} \to L$ of $T$-equivariant vector
bundles on $X$.  We obtain a $T$-equivariant exact sequence
\[
0 \to L^\vee \otimes \C_{-\omega_\ga} \to \cO_X \to \cO_{X^{s_\ga}}
\to 0
\]
which reveals the identity
$J = 1 - \cO^{s_\ga} = [L^\vee \otimes \C_{-\omega_\ga}]$ in $K^T(X)$.

\begin{proof}[Proof of \Lemma{Ju}]
  The first identity of the lemma follows since
  $L|_{u.P} \cong \C_{-u.\omega_\ga}$ as a representation of $T$.  The
  second identity is clear if $u=1$.  If $u \neq 1$ then choose
  $u' \in W^P$ such that $u' \leq u$ and $\ell(u') = \ell(u)-1$.  Then
  $I(u) = I(u') \cup \{\al\}$ for some root $\al \in \cP_X$ such that
  $u = u' s_\al$, and we have $\delta(\al) = u'.\al$.  We obtain
  \[
  \omega_\ga - u.\omega_\ga =
  \omega_\ga - u'.(\omega_\ga - (\omega_\ga,\al^\vee)\,\al) =
  (\omega_\ga - u'.\omega_\ga) + (\omega_\ga,\al^\vee)\,\delta(\al)
  \,,
  \]
  so the required identity follows by induction on $\ell(u)$.
\end{proof}

The following result implies \Theorem{ktchev} for all minuscule
varieties.

\begin{prop}\label{prop:noshort}
  \Theorem{ktchev2} is true whenever all minimal boxes of
  $\cP_X \ssm I(u)$ are long roots.
\end{prop}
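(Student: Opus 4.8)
The plan is to use \Proposition{boundary} to realize the key class geometrically and reduce \Theorem{ktchev2} to an intersection-theoretic computation. Since all minimal boxes of $\cP_X \ssm I(u)$ are long roots, \Proposition{boundary} tells us that $\partial X^u$ is exactly the zero scheme $Z(u.\sigma) \cap X^u$ of the restricted section $\tau = (u.\sigma)|_{X^u}$. Thus $\partial X^u$ is a Cartier divisor on $X^u$ and we have the Koszul-type exact sequence $0 \to (L^\vee \otimes \C_{-\omega_\ga})|_{X^u} \to \cO_{X^u} \to \cO_{\partial X^u} \to 0$. Pushing forward to $X$ and using $J = [L^\vee \otimes \C_{-\omega_\ga}]$, this gives the identity $J \cdot \cO^u = \cO^u - [\cO_{\partial X^u}]$ in $K^T(X)$. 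On the other hand, Brion's identity \eqn{brion_dual} says $\cO^\vee_u = [I_{\partial X_u}]$, and by the translate relation between $X_u$ and $X^{u^\vee}$ this should translate into a description of $\theta_0(\cO^u)$; in the long-box case $\phi = \mathrm{id}$ on $\cO^u$ (there are no short rook strips out of $u$ whose boxes are all short and minimal in $\cP_X\ssm I(u)$, by hypothesis), so the right-hand side of \Theorem{ktchev} is simply $J_u\,\theta_0(\cO^u)$.

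The next step is to compute the equivariant Euler characteristic $\euler{X}(J \cdot \cO^u \cdot \cO_w)$ using the exact sequence above. Tensoring with $\cO_w = [\cO_{X_w}]$ and taking Euler characteristics reduces the problem to understanding $\euler{X^u \cap X_w}(\cO_{X^u \cap X_w})$ and $\euler{\partial X^u \cap X_w}(\cO_{\partial X^u \cap X_w})$, plus the twist by the line bundle $L^\vee \otimes \C_{-\omega_\ga}$ restricted to this Richardson variety. Here I would invoke the standard facts (as in the proof of \Lemma{th0dual}) that $X^u \cap X_w$ is empty unless $u \leq w$, and otherwise is a rational variety with rational singularities, so that $\euler{X^u\cap X_w}(\cO) = 1$. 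The key point is to compute the Euler characteristic of $L$ restricted to the Richardson variety $X^u \cap X_w$ when $u \le w$; since this is a line bundle on a rational-with-rational-singularities projective variety, one expects either $0$ or a single monomial depending on positivity, and the relevant $T$-fixed point contribution picks out $J_w$ or $\sqrt{J_u J_w}$.

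The cleanest route is probably to observe that $J \cdot \cO^u \cdot \cO_w = [L^\vee \otimes \C_{-\omega_\ga}] \cdot [\cO_{X^u \cap X_w}]$, and since $X^u \cap X_w$ is a Richardson variety with a unique $T$-fixed point $w.P$ of maximal attracting cell (or by a direct GKM/localization argument), $\euler{X}$ of this product equals the restriction of $[L^\vee \otimes \C_{-\omega_\ga}]$ at the appropriate fixed point times the structure-sheaf Euler characteristic, which is $1$. Restriction at $w.P$ gives $[\C_{w.\omega_\ga - \omega_\ga}] = J_w$ by the first identity of \Lemma{Ju}. But one must be careful: the nonvanishing condition "$w/u$ is a short rook strip" in \Theorem{ktchev2} must come out automatically, and in the current long-box hypothesis "short rook strip" forces $w = u$ (or, when $\Phi$ is not simply laced, a product of short reflections with all boxes minimal short — but these are excluded by the hypothesis on $\cP_X\ssm I(u)$ when $u\le w$ forces comparability). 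So I expect the statement to reduce, under the hypothesis, to: $\euler{X}(J \cdot \cO^u \cdot \cO_w)$ equals $1 - J_u = (-1)^0\sqrt{J_u J_u}$ when... wait, no — more carefully, one gets $\euler{X}(\cO^u\cdot\cO_w) - \euler{X}([\cO_{\partial X^u}]\cdot\cO_w)$, which by an Euler-characteristic-of-boundary argument telescopes. The \textbf{main obstacle} I anticipate is precisely this bookkeeping: showing that the contributions from $X^u \cap X_w$ and $\partial X^u \cap X_w$ combine to give exactly $(-1)^{\ell(w/u)}\sqrt{J_uJ_w}$ for short rook strips $w/u$ and $0$ otherwise, which requires carefully tracking the line-bundle twist through an induction on $\ell(w)-\ell(u)$ using the prime-divisor decomposition of $\partial X^u$ from \Proposition{boundary} — and checking that in the long-box case every relevant $w/u$ is automatically $u = w$, so the formula collapses to the single term $\euler{X}(J\cdot\cO^u\cdot\cO_u) = J_u$, matching $\sqrt{J_uJ_u} = J_u$. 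Verifying the vanishing for $u < w$ with $w/u$ not a short rook strip is then the heart of the argument, handled by the same sign-cancelling involution $w' \mapsto w' s_\al$ on a minimal box $\al$ as in \Lemma{th0dual}, now applied to the twisted Euler characteristic.
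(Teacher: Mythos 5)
Your overall strategy (use \Proposition{boundary} to realize the boundary as a zero scheme of $u.\sigma$ and conclude via duality of $[I_{\partial X^u}]$ with $\cO_u$) is the right one, and it is essentially the paper's route, but there is a concrete error that derails the computation: the section cutting out $\partial X^u$ on $X^u$ is $u.\sigma$, which has $T$-weight $-u.\omega_\ga$, not $-\omega_\ga$. The equivariant Koszul sequence is therefore
$0 \to (L^\vee \otimes \C_{-u.\omega_\ga})|_{X^u} \to \cO_{X^u} \to \cO_{\partial X^u} \to 0$,
so that $[I_{\partial X^u}] = [L^\vee \otimes \C_{-u.\omega_\ga}]\cdot \cO^u = J_u^{-1} J \cdot \cO^u$, equivalently $J \cdot \cO^u = J_u\,[I_{\partial X^u}]$. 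Your untwisted identity $J\cdot\cO^u = \cO^u - [\cO_{\partial X^u}]$ is false in $K^T(X)$ (restrict both sides to the fixed point $u.P$: the left side gives $J_u$, the right side gives $1$); it is only the non-equivariant shadow. This lost factor is exactly the factor the theorem asserts: with the twist restored, the diagonal value is $\euler{X}(J\cdot\cO^u\cdot\cO_u) = J_u\,\euler{X}([I_{\partial X^u}]\cdot\cO_u) = J_u$, whereas your pipeline, carried out consistently, would output $1$. With the corrected identity the proposition is immediate: under the hypothesis every box of a short rook strip $I(w)\ssm I(u)$ would be a short minimal box of $\cP_X\ssm I(u)$ (minimality holds because $I(w)$ is a lower order ideal), so short rook strips are trivial and \Theorem{ktchev2} reduces to the statement that $J_u^{-1}J\cdot\cO^u$ is dual to $\cO_u$, which is exactly Brion's identity \eqn{brion_dual} (equivalently \Lemma{th0dual}). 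No telescoping or induction on $\ell(w)-\ell(u)$ is needed.

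A second unsound step is the claim that $\euler{X}\bigl([L^\vee\otimes\C_{-\omega_\ga}]\cdot[\cO_{X^u\cap X_w}]\bigr)$ equals the restriction of the line bundle class at a single distinguished fixed point of the Richardson variety times $\euler{}(\cO)=1$. There is no such localization principle (and it would contradict the vanishing asserted by \Theorem{ktchev2} for most pairs $u\le w$); the duality statement $\euler{X}([I_{\partial X^u}]\cdot\cO_w)=\delta_{u,w}$ is what does this work, and you should cite it rather than attempt a fixed-point evaluation.
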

\begin{proof}
  In the situation of the proposition, \Theorem{ktchev2} states that
  $J_u^{-1} J\, \cO^u$ is dual to $\cO_u$.  \Proposition{boundary}
  implies that
  $I_{\partial X^u} \cong (L^\vee \otimes \C_{-u.\omega_\ga})|_{X^u}$,
  so
  $J_u^{-1} J\, \cO^u = [L^\vee \otimes \C_{-u.\omega_\ga}]\, \cO^u =
  [I_{\partial X^u}]$
  is dual to $\cO_u$ by Brion's identity \eqn{brion_dual}.
\end{proof}

\subsection{Lagrangian Grassmannians}\label{sec:lagrange}

We next prove \Theorem{ktchev2} for Lagrangian Grassmannians
$X = \LG(n,E)$, using the notation introduced in \Example{LG}.

Let $\bP E$ be the projective space of lines through the origin in
$E$, set
$\SF(1,n;E) = \{ (L,V) \in \bP E \times X \mid L \subset V \}$, and
let $\pi : \SF(1,n;E) \to X$ and $\varphi : \SF(1,n;E) \to \bP E$ be
the projections.  Both of these projections are $T$-equivariant maps.
Set $E^n = \Span\{e_{n+1}, e_{n+2}, \dots, e_{2n}\}$ and notice that
the opposite Schubert divisor in $X$ is defined by
$X^{s_\ga} = \{ V \in X \mid V \cap E^n \neq 0 \} =
\pi(\varphi^{-1}(\bP E^n))$.
Since $\pi$ maps $\varphi^{-1}(\bP E^n)$ birationally onto $X^{s_\ga}$
we obtain $\cO^{s_\ga} = \pi_* \varphi^*[\cO_{\bP E^n}]$ in $K^T(X)$.

Let $u, w \in W^P$ be elements satisfying $u \leq w$.  Then we have
$\cO^u \cdot \cO_w = [\cO_{X^u \cap X_w}]$ and
$\euler{X}(\cO^u \cdot \cO_w) = 1$.  Set
$Z^u_w = \varphi(\pi^{-1}(X^u \cap X_w)) \subset \bP E$.  It follows
from \cite[Lemma~5.1]{buch.ravikumar:pieri} or
\cite[Thm.~4.5]{knutson.lam.ea:projections} that
$\varphi_* \pi^*(\cO^u \cdot \cO_w) = [\cO_{Z^u_w}]$ in $K^T(\bP E)$.
By the projection formula we therefore obtain
\[
\euler{X}(J \cdot \cO^u \cdot \cO_w) =
\euler{X}(\cO^u \cdot \cO_w) - \euler{X}(\cO^{s_\ga} \cdot \cO^u \cdot
\cO_w) =
1 - \euler{\bP E}([\cO_{\bP E^n}] \cdot [\cO_{Z^u_w}]) \,.
\]

It is known from \cite[Lemma~5.2]{buch.ravikumar:pieri} that $Z^u_w$
is a complete intersection in $\bP E$ defined by linear and quadratic
equations.  We need the following explicit description of these
equations, which is a consequence of the proof given in
\cite{buch.ravikumar:pieri}.  Notice that the south-east boundary of
the shape $I(u)$ is a path of $n$ line segments from the upper-right
corner of the staircase diagram $\cP_X$ to its diagonal (see
\Example{projrich}).  The $k$-th step of this path will be called the
$k$-th {\em boundary segment\/} of $u$.  Let $\C[x_1,\dots,x_{2n}]$ be
the projective coordinate ring of $\bP E$, with $x_i$ dual to the
basis element $e_i \in E$.  The linear equations of $Z^u_w$ correspond
to the boundary segments that $u$ and $w$ have in common.  Assume that
the $k$-th boundary segment of $u$ is also a boundary segment of $w$.
If this boundary segment is horizontal, then it contributes the linear
equation $x_{2n+1-k} = 0$, and if it is vertical, then it contributes
the equation $x_k = 0$.  Two boxes in the staircase diagram are {\em
  connected\/} if they share a side.  The quadratic equations of
$Z^u_w$ correspond to connected components of the skew shape
$I(w) \ssm I(u)$ which do not contain any (long) diagonal boxes of
$\cP_X$.  These components will be called {\em short components\/} of
$w/u$.  Assume that the north-west side of a short component consists
of the boundary segments of $u$ numbered from $i$ to $j$.  Then the
component contributes the quadratic equation
$f_{ij} = x_i x_{2n+1-i} + x_{i+1} x_{2n-i} + \dots + x_j x_{2n+1-j} =
0$.

\begin{example}\label{example:projrich}
  Let $X = \LG(7,14)$ be the Lagrangian Grassmannian of type
  ${\rm C}_7$ and let $u \leq w$ be the elements of $W^P$
  corresponding to the marked boundaries.
  \[
  \tableau{12}{
    {}&{}&{}&{}&{}&[TTTVVVb]&[TTTvb] \\
    &[a]&{}&{}&{}&[tVVVBBB]&[a] \\
    &&{}&{}&[ltRRRBBB]&[a]&{} \\
    &&&[TTTRRRlb]&[a]&{}&{} \\
    &&&&{}&{}&{} \\
    &&&&&{}&{} \\
    &&&&&&{}
  }
  \]
  Then $Z^u_w \subset \bP^{13}$ is defined by
  $x_5 = x_9 = x_{14} = x_2x_{13}+x_3x_{12}+x_4x_{11}=0$.
\end{example}

Let $\cO_{\bP E}(-1) \subset \bP E \times E$ denote the tautological
subbundle, with the equivariant structure defined by the action of $G$
on $E$, and set $h = [\cO_{\bP E}(-1)] \in K^T(\bP E)$.  Since $T$
acts on $x_i$ with weight $-\vep_i$ and on $f_{ij}$ with weight
$\vep_i+\vep_{2n+1-i} = 0$, we have
$[\cO_{Z(x_i)}] = 1 - [\C_{-\vep_i}] h$ and
$[\cO_{Z(f_{ij})}] = 1 - h^2$ in $K^T(\bP E)$.  Let $l$ be the number
of boundary segments shared by $u$ and $w$, let
$\mu_1, \mu_2, \dots, \mu_l$ be the weights of the corresponding
linear equations, and let $q$ be the number of short components of
$w/u$.  We then have
\[
[\cO_{\bP E^n}] \cdot [\cO_{Z^u_w}] =
\left( \prod_{j=1}^n (1 - [\C_{-\vep_j}] h) \right)\!
\left( \prod_{i=1}^l (1 - [\C_{\mu_i}] h) \right) (1 - h^2)^q
\ = \
1 + \sum_{k=1}^{n+l+2q} a_k\, h^k
\]
for some classes $a_k \in \Gamma$.  The leading coefficient is
$a_{n+l+2q} = (-1)^{n+l+q} [\C_{\nu(u,w)}]$, where
$\nu(u,w) = \sum_{i=1}^l \mu_i - \sum_{j=1}^n \vep_j$.

\begin{lemma}
  Let $F$ be any representation of $T$ of dimension $d$, and consider
  the sheaf Euler characteristic map
  $\euler{\bP F} : K^T(\bP F) \to \Gamma$.  Then we have
  \[
  \euler{\bP F}([\cO_{\bP F}(-k)]) \ = \
  \begin{cases}
    1 & \text{if $k=0$,} \\
    0 & \text{if $0 < k < d$, and} \\
    (-1)^{d-1}\, [\bigwedge^d F] & \text{if $k=d$.}
  \end{cases}
  \]
\end{lemma}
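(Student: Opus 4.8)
The plan is to compute the sheaf Euler characteristic of $\cO_{\bP F}(-k)$ directly from the standard cohomology of line bundles on projective space, keeping track of the $T$-action throughout. Write $V = \bP F$ with $d = \dim F$, so $\bP F = \bP^{d-1}$, and recall the classical computation $H^i(\bP F, \cO(-k)) = 0$ for all $i$ when $0 < k < d$; $H^0(\bP F, \cO(-k)) = \mathrm{Sym}^{-k}(F^*)$ for $k \le 0$; and $H^{d-1}(\bP F, \cO(-k)) = \mathrm{Sym}^{k-d}(F) \otimes \bigwedge^d F^*$ for $k \ge d$, with all other cohomology vanishing. The content of the lemma is just the three values $k = 0$, $0 < k < d$, and $k = d$, so only the first and last of these general facts are needed.

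First I would handle $k = 0$: then $\cO_{\bP F}(-k) = \cO_{\bP F}$, and $\euler{\bP F}(\cO_{\bP F}) = [H^0(\bP F, \cO_{\bP F})] = [\C] = 1$ in $\Gamma$, since $\bP F$ is connected and the global functions are the trivial representation. Next, for $0 < k < d$, all cohomology groups of $\cO_{\bP F}(-k)$ vanish, so $\euler{\bP F}(\cO_{\bP F}(-k)) = 0$. Finally, for $k = d$, the only nonzero cohomology is $H^{d-1}(\bP F, \cO_{\bP F}(-d)) \cong \bigwedge^d F^*$ as a representation of $T$ (Serre duality on $\bP^{d-1}$: $\omega_{\bP F} \cong \cO_{\bP F}(-d) \otimes \bigwedge^d F^*$, and $H^{d-1}(\bP F, \omega_{\bP F}) \cong H^0(\bP F, \cO_{\bP F})^* = \C$, so $H^{d-1}(\bP F, \cO_{\bP F}(-d)) \cong \bigwedge^d F^*$; equivalently use the Euler sequence to identify the canonical bundle). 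Therefore $\euler{\bP F}(\cO_{\bP F}(-d)) = (-1)^{d-1}[\bigwedge^d F^*]$. To match the stated formula one checks the sign and the exterior-power factor; note the statement writes $[\bigwedge^d F]$, so one should either verify that the paper's conventions make $\bigwedge^d F^*$ and $\bigwedge^d F$ interchangeable here (they are mutually inverse in $\Gamma$), or track the duality convention carefully — this bookkeeping of $F$ versus $F^*$ is the one genuinely delicate point.

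The main obstacle is therefore not any deep geometry but getting the equivariant duality straight: one must be consistent about whether $\cO_{\bP F}(-1)$ denotes the tautological subbundle (so its fiber over $[v]$ is $\C v \subset F$) or its dual, and correspondingly whether $H^{d-1}$ of the canonical bundle contributes $\bigwedge^d F$ or $\bigwedge^d F^*$. Given the paper's convention $h = [\cO_{\bP E}(-1)]$ for the tautological subbundle in the surrounding text, I would present the computation so that the top cohomology is identified via the Koszul/Euler sequence $0 \to \cO_{\bP F}(-d) \to \cdots \to \cO_{\bP F} \to 0$ twisted appropriately, or simply via Serre duality, and then record the answer in whichever of the two equivalent forms the authors use downstream. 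An alternative, entirely elementary route that avoids Serre duality is to use the Euler exact sequence $0 \to \cO_{\bP F}(-1) \to F^* \otimes \cO_{\bP F} \to T_{\bP F}(-1) \to 0$ (or its exterior powers) to compute $H^*(\bP F, \cO_{\bP F}(-k))$ inductively in $k$; this makes the $T$-equivariant structure manifest at each stage but is more laborious.
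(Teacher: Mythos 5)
Your overall strategy is the same as the paper's: the cases $k=0$ and $0<k<d$ are immediate from the standard cohomology of line bundles on projective space, and the case $k=d$ is handled by Serre duality together with the $T$-equivariant identification $\omega_{\bP F} = (\bigwedge^d F^*)\otimes\cO_{\bP F}(-d)$, which is exactly the paper's one-line proof. However, the execution of the one point you yourself flag as delicate is wrong. From your own (correct) premises the conclusion should be $\bigwedge^d F$, not $\bigwedge^d F^*$: since $\omega_{\bP F} = (\bigwedge^d F^*)\otimes\cO_{\bP F}(-d)$, you have $\cO_{\bP F}(-d) = \omega_{\bP F}\otimes\bigwedge^d F$, and since equivariant Serre duality makes $H^{d-1}(\bP F,\omega_{\bP F})\cong\C$ the \emph{trivial} representation, you get $H^{d-1}(\bP F,\cO_{\bP F}(-d))\cong\bigwedge^d F$, hence $\euler{\bP F}([\cO_{\bP F}(-d)]) = (-1)^{d-1}[\bigwedge^d F]$ as stated. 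Your general formula $H^{d-1}(\bP F,\cO(-k)) = \mathrm{Sym}^{k-d}(F)\otimes\bigwedge^d F^*$ has the same misplaced twist under the paper's convention (where $\bP F$ parametrizes lines in $F$ and $\cO_{\bP F}(-1)$ is the tautological subbundle); the correct twist is by $\bigwedge^d F$. A quick sanity check: for $d=1$, $\bP F$ is a point and $\cO_{\bP F}(-1)$ is $F$ itself, so the Euler characteristic is $[F]$, not $[F^*]$.

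The fallback you offer does not repair this: $[\bigwedge^d F]$ and $[\bigwedge^d F^*]$ are mutually \emph{inverse} classes in $\Gamma$ (inverse characters of $T$), not equal ones, so they are not interchangeable unless $\det F$ is the trivial character. (It so happens that in the paper's application $F=E=\C^{2n}$ is the symplectic space and $[\bigwedge^{2n}E]=1$, so the slip would be invisible there, but the lemma is stated for arbitrary $F$ and your version of it is false in general.) The fix is a one-line correction of the untwisting step; the rest of your argument, including the vanishing for $0<k<d$ and the $k=0$ case, is fine and coincides with what the paper uses.
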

\begin{proof}
  This follows from Serre duality
  \cite[III.7.7]{hartshorne:algebraic*1} because the canonical
  sheaf on $\bP F$ is given by
  $\omega_{\bP F} = (\bigwedge^d F^*) \otimes \cO_{\bP F}(-d)$ as a
  $T$-equivariant vector bundle.
\end{proof}

Since each short component of $w/u$ occupies at least two of the
boundary segments of $u$, we have $l + 2q \leq n$, with equality if
and only if $w/u$ is a short rook strip, and in this case we have
$\ell(w/u) \equiv n+l+q$ (mod 2).  Since
$[\bigwedge^{2n} E] = 1 \in \Gamma$ we obtain
\[
\euler{X}(J \cdot \cO^u \cdot \cO_w) \ = \
\begin{cases}
  (-1)^{\ell(w/u)}\, [\C_{\nu(u,w)}] &
  \text{if $w/u$ is a short rook strip;} \\
  0 & \text{otherwise.}
\end{cases}
\]
As special cases we have
$J_u = \euler{X}(J \cdot \cO^u \cdot \cO_u) = [\C_{\nu(u,u)}]$ and
$J_w = [\C_{\nu(w,w)}]$.  To finish the proof of \Theorem{ktchev2} it
is therefore enough to show that $2\,\nu(u,w) = \nu(u,u) + \nu(w,w)$
whenever $w/u$ is a short rook strip.  To see this, let
$\vep_i+\vep_j \in I(w)\ssm I(u)$, and set $k = n+i-j$.  Then the
$k$-th boundary segment of $u$ is horizontal while the $k$-th boundary
segment of $w$ is vertical, so $Z^u_u$ satisfies the equation
$x_{2n+1-k}=0$ while $Z^w_w$ satisfies the equation $x_k=0$.  It
follows that the weights $-\vep_{2n+1-k}$ and $-\vep_k$ of these
equations cancel out in the sum $\nu(u,u) + \nu(w,w)$, and the same
happens for the weights corresponding to the $k+1$-st boundary
segments.  This shows that $2\nu(u,w) = \nu(u,u) + \nu(w,w)$ and
completes the proof.

\subsection{Odd quadric hypersurfaces}\label{sec:oddquadric}

We finally prove \Theorem{ktchev} for odd quadric hypersurfaces, which
is the last case needed to complete the proof of the Chevalley
formula.  Set $E = \C^{2n+1}$, with basis
$\{e_1, e_2, \dots, e_{2n+1}\}$, and define an orthogonal form on $E$
by $(e_i,e_j) = \delta_{i+j,2n+2}$.  The corresponding symmetry group
$G = \SO(E)$ has Lie type ${\rm B}_n$.  Let $T \subset G$ be the
maximal torus of of diagonal matrices, and let $B \subset G$ be the
Borel subgroup of upper triangular matrices.  For $1 \leq i \leq 2n+1$
we let $\vep_i : T \to \C^*$ be the character that maps a diagonal
matrix to its $i$-th entry.  We then have $\vep_{n+1}=0$ and
$\vep_i + \vep_{2n+2-i} = 0$ for $1 \leq i \leq 2n+1$.  The set of
simple roots is
$\Delta = \{\vep_1-\vep_2, \vep_2-\vep_3, \dots, \vep_{n-1}-\vep_n,
\vep_n\}$,
the unique cominuscule simple root is $\ga = \vep_1-\vep_2$, and
$\omega_\ga = \vep_1$.  Let $\C[x_1,\dots,x_{2n+1}]$ be the coordinate
ring of $E$ and set
$q = x_1 x_{2n+1} + x_2 x_{2n} + \dots + x_n x_{n+2} + \frac{1}{2}
x_{n+1}^2$.
The cominuscule variety $G/P$ corresponding to $\ga$ can be identified
with the quadric hypersurface $X = Z(q) \subset \bP E$ of dimension
$2n-1$.  The set $\cP_X = \{ \vep_1-\vep_i \mid 2 \leq i \leq 2n \}$
is represented by the following diagram where the boxes increase from
left to right.
\[
\def\sroot#1#2{\vep_1\!\!#1\!\vep_{#2}}%
\tableau{29}{{\ga}&{\sroot-3}&{\hdots}&{\sroot-n}&
{\vep_1}&{\sroot+n}&{\hdots}&{\sroot+3}&{\sroot+2}}
\]

The $T$-fixed points of $X$ are the points
$\langle e_k \rangle = \C e_k$ for $1 \leq k \leq 2n+1$ and
$k \neq n+1$.  For $k \in [0,2n-1]$ we let $X^k$ denote the
$B^-$-stable Schubert variety of codimension $k$ in $X$.  More
precisely we have
$X^k = \ov{B^-.\langle e_{k+1} \rangle} = Z(x_1,\dots,x_k,q)$ for
$0 \leq k \leq n-1$, and
$X^k = \ov{B^-.\langle e_{k+2} \rangle} = Z(x_1,\dots,x_{k+1})$ for
$n \leq k \leq 2n-1$.  Set $\cO^k = [\cO_{X^k}]$ and let
$J_k \in K^T(\pt)$ be the restriction of $J$ to the $T$-fixed point
defining $X^k$.  Since $\vep_1$ is the only short box in $\cP_X$, it
follows from \Proposition{noshort} that the Chevalley formula
$J \cdot \cO^k = J_k\, \theta_0(\phi(\cO^k))$ holds whenever
$k \neq n-1$.

Let $\iota : X \to \bP E$ be the inclusion and set
$\wh J = [\C_{-\vep_1}\otimes \cO_{\bP E}(-1)] \in K^T(\bP E)$.  Then
$\iota^*(\wh J) = J$ in $K^T(X)$.  Since the pushforward map
$\iota_* : K^T(X) \to K^T(\bP E)$ is injective, it is enough to show
that the identity
$\wh J \cdot \cO^{n-1} = J_{n-1} \ \theta_0(\phi(\cO^{n-1}))$ holds in
$K^T(\bP E)$.  Since $\vep_n$ is the only short simple root, we deduce
that $\delta(\vep_1) = \vep_n$ and
$\phi(\cO^{n-1}) = \cO^{n-1} - [\C_{-\vep_n}]\,\cO^n$.  Using that
$[\cO_{Z(x_i)}] = 1 - [\C_{-\vep_i}]\, [\cO_{\bP E}(-1)]$ in
$K^T(\bP E)$ for each $i$ we also have
\[
\begin{split}
  [\cO_{Z(q)}]
  &= 1 - [\cO_{\bP E}(-2)]
  = 1 - [\cO_{\bP E}(-1)] + [\cO_{\bP E}(-1)] \cdot [\cO_{Z(x_{n+1})}] \\
  &= [\cO_{Z(x_{n+1})}] + [\C_{-\vep_n}]\, (1 - [\cO_{Z(x_{n+2})}]) \cdot
  [\cO_{Z(x_{n+1})}] \\
  &= [\cO_{Z(x_{n+1})}] + [\C_{-\vep_n}]\, [\cO_{Z(x_{n+1})}] -
  [\C_{-\vep_n}]\, [\cO_{Z(x_{n+1},x_{n+2})}] \,.
\end{split}
\]
Since $X^{n-1} = Z(x_1,\dots,x_{n-1}) \cap Z(q)$ and
$J_{n-1} = \wh J|_{\langle e_n \rangle} = [\C_{\vep_n-\vep_1}]$, we
obtain
\begin{multline*}
  \wh J \cdot \cO^{n-1}
  = J_{n-1} (1 - [\cO_{Z(x_n)}]) \cdot \cO^{n-1}
  = J_{n-1} \cO^{n-1} - J_{n-1} [\cO_{Z(x_1,\dots,x_n)}] \cdot [\cO_{Z(q)}] \\
  = J_{n-1} \cO^{n-1} - J_{n-1} (\cO^n + [\C_{-\vep_n}] \cO^n -
  [\C_{-\vep_n}] \cO^{n+1})
  = J_{n-1}\, \theta_0(\phi(\cO^{n-1})) \,.
\end{multline*}
This completes the proof of \Theorem{ktchev}.

\section{Solutions to Molev-Sagan Equations}
\label{sec:ms}

\subsection{Recursive identities}

In this section we let $X = G/P$ be an arbitrary flag manifold and
show that the $T$-equivariant quantum $K$-theory ring $\QK_T(X)$,
together with its Schubert structure constants and the underlying
(three point, genus zero) Gromov-Witten invariants of $X$, are
uniquely determined by the products that involve divisor classes.
Here $T$ denotes a maximal torus in $G$.

It was observed by Molev and Sagan
\cite{molev.sagan:littlewood-richardson} that the multiplicative
structure constants of factorial Schur polynomials satisfy recursive
identities that we will call {\em Molev-Sagan equations}.  Related
properties of factorial Schur polynomials had previously been studied
by Okounkov and Olshanski \cite{okounkov:quantum,
  okounkov.olshanski:shifted*2}.  Knutson and Tao used the Molev-Sagan
equations to study the equivariant Schubert structure constants of
Grassmannians \cite{knutson.tao:puzzles}.  In this context the
identities follow from the Chevalley formula together with the fact
that the equivariant cohomology ring is associative.  It was shown in
\cite{mihalcea:equivariant*1} that the same method can be applied to
arbitrary flag varieties.  Molev-Sagan type equations have by now been
used to prove Littlewood-Richardson rules in several papers
\cite{molev.sagan:littlewood-richardson, knutson.tao:puzzles,
  buch:mutations, pechenik.yong:equivariant}.

The Molev-Sagan equations provide a triangular system of identities
that reduce arbitrary equivariant Schubert structure constants
$C^w_{u,v}$ to the special constants of the form $C^w_{w,w}$, which
are given by a simple formula of Kostant and Kumar
\cite{kostant.kumar:nil*1}.  The Chevalley formula of Mihalcea
\cite{mihalcea:equivariant*1} for the equivariant quantum cohomology
ring $\QH_T(X)$ can be used in a similar way to produce equations
satisfied by the equivariant Gromov-Witten invariants of $X$, but in
this case the equations are no longer triangular, and no analogue of
the formula for $C^w_{w,w}$ is known.  This added complexity was
handled in \cite{mihalcea:equivariant, mihalcea:equivariant*1} with
more refined recursive identities that reduce all (3 point, genus
zero) equivariant Gromov-Witten invariants to those associated to
multiplication with the identity class.  This leads to the surprising
realization that, even though the ring $\QH_T(X)$ may not be generated
by divisor classes, the structure of this ring is completely
determined by its Chevalley formula, and the same conclusion holds for
the equivariant cohomology ring $H^*_T(X;\Z)$.\footnote{After this
  paper was finished we learned from Ciocan-Fontanine that this can be
  explained using a localization argument.  See \Remark{generator} for
  details.}

In this section we show that the equivariant quantum $K$-theory ring
$\QK_T(X)$ is similarly determined by multiplication with divisor
classes, despite the fact that a formula for multiplication with
divisors is known only when $X$ is cominuscule.  We will derive this
result as a formal consequence of the same property of the equivariant
cohomology ring $H^*_T(X;\Z)$.  For this reason we start by
considering this case, where we reprove and slightly extend some
results from \cite{mihalcea:equivariant*1} in a form that will be
useful later.  This part of our analysis is also valid for the
equivariant cohomology of any Kac-Moody flag variety, see
\Remark{kac-moody}.

\subsection{Equivariant cohomology}\label{sec:ms.ht}

Let $\La = H^*_T(\pt;\Z)$ denote the equivariant cohomology of a point
and recall the notation from \Section{qk}.  The ring $H^*_T(X;\Z)$ is
a free $\La$-module with a basis of the Schubert classes $[X^u]$ for
$u \in W^P$.  The {\em equivariant Schubert structure constants\/} of
$X$ are the classes $C^w_{u,v} \in \La$, defined for $u,v,w \in W^P$
by the identity
\[
[X^u] \cdot [X^v] \ = \ \sum_w C^w_{u,v}\, [X^w]
\]
in $H^*_T(X;\Z)$.  The constant $C^w_{u,v}$ is non-zero only if
$u \leq w$ and $v \leq w$ in the Bruhat order of $W^P$.

For any character $\la : T \to \C^*$ we set
$c_T(\la) = c_1(\C_\la) \in \La$.  The ring $\La$ is generated by
these classes, and the classes $c_T(\be)$ for $\be \in \Delta$ are
algebraically independent.  A class in $\La$ is {\em non-negative\/}
if it can be written as a polynomial with non-negative integer
coefficients in the classes $c_T(\be)$ for $\be \in \Delta$.  Graham
has proved that all equivariant structure constants $C^w_{u,v}$ are
non-negative classes \cite{graham:positivity}.  For $\be \in \Delta$
we let $\omega_\be$ denote the corresponding fundamental weight.  Then
the Chevalley formula for $H^*_T(X;\Z)$
\cite{chevalley:decompositions, kostant.kumar:nil*1} states that for
$\be \in \Delta \ssm \Delta_P$ and $u,w \in W^P$ we have
\[
C^w_{s_\be,u} =
\begin{cases}
  c_T(\omega_\be - u.\omega_\be) & \text{if $w=u$,} \\
  (\omega_\be,\al^\vee) & \text{if $\ell(w)=\ell(u)+1$ and
    $\exists$ $\al\in \Phi^+$ with $w = u s_\al$,} \\
  0 & \text{otherwise.}
\end{cases}
\]

The following was proved in \cite{mihalcea:equivariant*1}.  We
summarize the proof for completeness.

\begin{lemma}\label{lemma:beta}
  Let $u,w \in W^P$ be minimal representatives.\smallskip

  \noin{\rm(a)} If $u \leq w$, then $C^w_{s_\be,w} - C^u_{s_\be,u}$ is
  a non-negative class in $\Lambda$ for every
  $\be \in \Delta \ssm \Delta_P$.\smallskip

  \noin{\rm(b)} If $u \neq w$, then there exists
  $\be \in \Delta \ssm \Delta_P$ such that
  $C^w_{s_\be,w} \neq C^u_{s_\be,u}$.\smallskip

  \noin{\rm(c)} If $u < w$ and $C^w_{s_\be,w} \neq C^u_{s_\be,u}$,
  then there exists $u' \in W^P$ such that $u < u' \leq w$,
  $\ell(u')=\ell(u)+1$, and $C^{u'}_{s_\be,u} \neq 0$.
\end{lemma}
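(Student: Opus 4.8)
The three parts concern the class $C^u_{s_\be,u} = c_T(\omega_\be - u.\omega_\be)$, so the whole lemma is really a statement about the weights $\omega_\be - u.\omega_\be$ as $u$ ranges over $W^P$. My plan is to reduce everything to the telescoping formula from the Chevalley formula, namely that if $u \leq u'$ with $\ell(u') = \ell(u)+1$ and $u' = u s_\al$ for $\al \in \Phi^+$, then
\[
C^{u'}_{s_\be,u'} - C^u_{s_\be,u} = c_T\big((\omega_\be,\al^\vee)\,\delta(\al)\big),
\]
where $\delta(\al) = u.\al \in \Delta$ is the simple root produced by the covering relation; this follows from \Lemma{Ju}-style bookkeeping (the same computation as in the proof of \Lemma{Ju}, applied to $\omega_\be$ instead of $\omega_\ga$). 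Since $(\omega_\be,\al^\vee) \geq 0$ for all $\al \in \Phi^+$, each such difference is $c_T$ of a non-negative integer multiple of a simple root, hence a non-negative class in $\La$.

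For part (a): choose a saturated chain $u = u_0 < u_1 < \dots < u_r = w$ in the Bruhat order of $W^P$ from $u$ to $w$ (such a chain exists since $u \leq w$ and the Bruhat order is graded). Summing the telescoping identity along the chain gives
\[
C^w_{s_\be,w} - C^u_{s_\be,u} \ = \ \sum_{i=1}^{r} c_T\big((\omega_\be,\al_i^\vee)\,\delta(\al_i)\big),
\]
which is a sum of non-negative classes, hence non-negative. For part (c): under the hypothesis $C^w_{s_\be,w} \neq C^u_{s_\be,u}$, the telescoping sum along any saturated chain from $u$ to $w$ must have at least one nonzero term, say the first step $u < u_1$ with $u_1 = u s_{\al}$; then $(\omega_\be,\al^\vee) \neq 0$, and comparing with the Chevalley formula one sees $C^{u_1}_{s_\be,u} = (\omega_\be,\al^\vee) \neq 0$. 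Taking $u' = u_1$ works. (One should double-check that $u < u' \leq w$: the first inequality is by construction, and $u' \leq w$ because $u'$ lies on a chain to $w$.)

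For part (b): this is the assertion that if $u \neq w$ in $W^P$ then $\omega_\be \ne$'s differences distinguish them for some $\be \in \Delta \ssm \Delta_P$ — equivalently, $u \mapsto (\omega_\be - u.\omega_\be)_{\be \in \Delta\ssm\Delta_P}$ is injective on $W^P$. I would argue this via the inversion set: by \Lemma{Ju} (applied with $\omega_\be$), $\omega_\be - u.\omega_\be$ records, through its expansion in simple roots, data about $I(u)$. More directly, if $C^w_{s_\be,w} = C^u_{s_\be,u}$ for every $\be \in \Delta \ssm \Delta_P$, then $u.\omega_\be = w.\omega_\be$ for all such $\be$; since the fundamental weights $\omega_\be$, $\be \in \Delta \ssm \Delta_P$, together with $W_P$-invariants, span enough of the weight lattice to determine a coset in $W/W_P$ (the $\omega_\be$ for $\be \notin \Delta_P$ generate a finite-index sublattice of the $W_P$-coinvariants of the weight lattice), we get $u W_P = w W_P$, hence $u = w$ as minimal representatives. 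I expect this injectivity step to be the main obstacle: the cleanest route is probably not via spanning arguments but by taking $\be$ to be a simple root not in $\Delta_P$ on which $u$ and $w$ first disagree along a chain — formally, pick a saturated chain from $u \cap w$ up to $u$ and another up to $w$; the first step out of $u \cap w$ toward $u$ uses some $\al$ with $\delta(\al) = u'.\al$, and since $u \ne w$ at least one of these two first steps moves $\omega_\be$ for a suitable $\be \in \Delta\ssm\Delta_P$, while the combinatorics of $W^P$ (full commutativity is not available in general, so one must use the graded Bruhat order and the positivity of all coefficients from part (a) to prevent cancellation) forces the corresponding $C$-values of $u$ and $w$ to differ. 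Organizing this non-cancellation cleanly, without the cominuscule hypothesis, is where the real care is needed.
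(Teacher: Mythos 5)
Your argument for part (c) has a genuine gap at its central step. Writing $C^w_{s_\be,w}-C^u_{s_\be,u}$ as a telescoping sum of non-negative classes along a saturated chain $u=u_0<u_1<\dots<u_r=w$ in $W^P$ does show that some step $u_{i-1}<u_i=u_{i-1}s_{\al_i}$ has $(\omega_\be,\al_i^\vee)\neq 0$, but nothing forces that step to be the first one, and ``say the first step'' is exactly the conclusion to be proved: part (c) is equivalent to the existence of a saturated chain from $u$ to $w$ whose \emph{first} step already has nonzero Chevalley coefficient. If the nonzero step occurs at some $i>1$, you only obtain a cover of $u_{i-1}$ with nonzero coefficient, not a cover of $u$, and there is no evident way to slide it down the chain. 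This is precisely where the paper has to work: it argues by induction on $\ell(w)-\ell(u)$, and in the problematic case $u.\omega_\be=v.\omega_\be$ (where $v$ is a cover of $u$ with $v\leq w$) it uses part (b) to choose $\ga\in\Delta\ssm\Delta_P$ with $C^v_{s_\ga,u}\neq 0$, gets $v'$ with $v<v'\leq w$ and $C^{v'}_{s_\be,v}\neq 0$ from the induction hypothesis, and then exploits non-negativity of the ordinary structure constants together with associativity of $H^*(X;\Z)$ applied to $[X^{s_\be}]\cdot[X^{s_\ga}]\cdot[X^u]$ to extract a cover $u'$ of $u$ with $C^{u'}_{s_\be,u}\neq 0$ and $u'<v'\leq w$. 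Some argument of this kind is needed to relocate the nonzero step to the bottom of the chain; as written, your part (c) is not proved.

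Parts (a) and (b) are essentially right but need tidying. In (a), note that this lemma is stated (and used in Section 4) for an arbitrary flag variety $G/P$, so you cannot assert that a Bruhat cover $u<u'=us_\al$ in $W^P$ produces a simple root $\delta(\al)=u.\al$; the labeling $\delta$ and the coincidence of Bruhat covers with weak-order covers are cominuscule facts. What holds in general is only $u.\al\in\Phi^+$, which is enough: $C^{u'}_{s_\be,u'}-C^u_{s_\be,u}=(\omega_\be,\al^\vee)\,c_T(u.\al)$ is still non-negative, and with that correction your telescoping proof of (a) coincides with the paper's. For (b), your first argument is the right one, but the justification should be the standard fact that the stabilizer in $W$ of the dominant weight $\sum_{\be\in\Delta\ssm\Delta_P}\omega_\be$ is exactly $W_P$ (the paper cites Bourbaki for this elsewhere); the ``finite-index sublattice of coinvariants'' remark does not by itself give this. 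Then $u.\omega_\be=w.\omega_\be$ for all $\be\in\Delta\ssm\Delta_P$ forces $w^{-1}u\in W_P$, hence $u=w$. The paper's own proof of (b) is even more direct: pick $\be\in\Delta\ssm\Delta_P$ such that $s_\be$ occurs in a reduced word for $u^{-1}w$ and apply the stabilizer theorem (Humphreys, Thm.~1.12). The chain-based alternative you sketch afterwards for (b) is unnecessary and, as you yourself note, incomplete.
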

\begin{proof}
  If $u \neq w$, then choose $\be \in \Delta \ssm \Delta_P$ such that
  $s_\be$ occurs in a reduced expression for $u^{-1}w$.  Then it
  follows from \cite[Thm.~1.12]{humphreys:reflection} that
  $u.\omega_\be \neq w.\omega_\be$.  This proves part (b).  Assume now
  that $u < w$.  Choose $v \in W^P$ such that $u < v \leq w$ and
  $\ell(v) = \ell(u)+1$, and choose $\al \in \Phi^+$ such that
  $v = u s_\al$.  Then $u.\al \in \Phi^+$.  For any
  $\be \in \Delta \ssm \Delta_P$ we have
  $u s_\al.\omega_\be = u.(\omega_\be - (\omega_\be,\al^\vee)\al) =
  u.\omega_\be - (\omega_\be,\al^\vee)u.\al$,
  so
  $C^v_{s_\be,v} - C^u_{s_\be,u} = c_T(u.\omega_\be - v.\omega_\be) =
  (\omega_\be,\al^\vee) c_T(u.\al)$
  is a non-negative class in $\La$.  Part (a) follows from this by
  induction on $\ell(w)-\ell(u)$.  Notice also that, when $u < v$ and
  $\ell(v) = \ell(u)+1$, we have $C^v_{s_\be,u}\neq 0$ if and only if
  $u.\omega_\be \neq v.\omega_\be$.  To prove part (c), assume that
  $u.\omega_\be = v.\omega_\be \neq w.\omega_\be$.  By induction on
  $\ell(w)-\ell(u)$ we can find $v' \in W^P$ such that $v < v' \leq w$
  and $C^{v'}_{s_\be,v} \neq 0$.  By part (b) we may choose
  $\ga \in \Delta \ssm \Delta_P$ with $C^v_{s_\ga,u} \neq 0$.  This
  implies that $[X^{v'}]$ occurs with non-zero coefficient in the
  product $[X^{s_\be}] \cdot [X^{s_\ga}] \cdot [X^u]$ in the ordinary
  cohomology ring $H^*(X;\Z)$.  Since this ring is associative, we
  deduce that $C^{u'}_{s_\be,u} \neq 0$ for some $u' \in W^P$ with
  $u < u' < v'$.
\end{proof}

Let $\La_0$ denote the field of fractions of $\La = H^*_T(\pt;\Z)$.
We will say that a class in $\La_0$ is {\em rationally positive\/} if
it can be written as a quotient of non-zero non-negative classes in
$\La$.  All rationally positive classes are non-zero, and all sums,
products, and quotients of rationally positive classes are again
rationally positive.  However, notice that some rationally positive
classes in $\La$ are not non-negative, for example we have
$x^2-x+1 = (x^3+1)/(x+1)$.

\begin{defn}\label{defn:ms_ht}
  Let $\{ D^w_{u,v} \}$ be any vector of classes $D^w_{u,v} \in \La_0$
  indexed by triples $(u,v,w) \in (W^P)^3$.  We will say that this
  vector satisfies the (generalized) {\em Molev-Sagan equations\/} for
  $H^*_T(X;\Z)$ if for all $u,v,w \in W^P$ and
  $\be \in \Delta \ssm \Delta_P$ we have
  \[
  (C^w_{s_\be,w} - C^u_{s_\be,u})\, D^w_{u,v} \ = \ \sum_{u' > u}
  C^{u'}_{s_\be,u} D^w_{u',v} - \sum_{w' < w} C^w_{s_\be,w'} D^{w'}_{u,v}
  \]
  and
  \[
  (C^w_{s_\be,w} - C^v_{s_\be,v})\, D^w_{u,v} \ = \ \sum_{v' > v}
  C^{v'}_{s_\be,v} D^w_{u,v'} - \sum_{w' < w} C^w_{s_\be,w'} D^{w'}_{u,v}
  \,,
  \]
  where the sums are over all elements $u'$, $v'$, or $w'$ in $W^P$
  with the indicated bounds.
\end{defn}

The associativity relations
$([X^{s_\be}] \cdot [X^u]) \cdot [X^v] = [X^{s_\be}] \cdot ([X^u]
\cdot [X^v])$
and
$[X^u] \cdot ([X^v] \cdot [X^{s_\be}]) = ([X^u] \cdot [X^v]) \cdot
[X^{s_\be}]$
of the ring $H^*_T(X;\Z)$ imply that the equivariant Schubert
structure constants $\{C^w_{u,v}\}$ of $X$ form a solution to the
Molev-Sagan equations.  It is known that the Molev-Sagan equations
together with known expressions \cite{kostant.kumar:nil*1} for the
special coefficients $C^w_{w,w}$ uniquely determine these structure
constants \cite{molev.sagan:littlewood-richardson,
  knutson.tao:puzzles, mihalcea:equivariant*1}.  It is interesting to
note that every solution $\{ D^w_{u,v} \}$ to the Molev-Sagan
equations must satisfy the commutativity relation
$D^w_{u,v} = D^w_{v,u}$, but we will not use this fact.

\begin{example}\label{example:molevsagan}
  Let $X = \bP^1$ and $W^P = \{1,s\}$, and set
  $a = C^s_{s,s} \in \La$.  Then all solutions $\{ D^w_{u,v} \}$ to
  the Molev-Sagan equations are given by
  \[
  D^s_{1,s} = D^s_{s,1} = \frac{1}{a} D^s_{s,s}
  \text{ \ and \ }
  D^s_{1,1} = \frac{1}{a^2} D^s_{s,s} - \frac{1}{a} D^1_{1,1}
  \text{ \ and \ }
  D^1_{1,s} = D^1_{s,1} = D^1_{s,s} = 0 \,,
  \]
  where $D^1_{1,1}$ and $D^s_{s,s}$ can be chosen freely in $\La_0$.
  Every solution to the Molev-Sagan equations defines an associative
  algebra $H = \Span_{\La_0}\{\sigma_1,\sigma_s\}$ with multiplication
  $\sigma_u \cdot \sigma_v = \sum_w D^w_{u,v} \sigma_w$.  The
  Chevalley formula holds in $H$ if and only if $D^s_{s,s} = a$.
\end{example}

\begin{prop}\label{prop:Auvwt}
  There exist classes $A^w_{u,v}(\tau) \in \La_0$, indexed by all
  quadruples $(u,v,w,\tau)$ in $(W^P)^4$, with the following
  properties:\smallskip

  \noin{\rm(a)} Every solution $\{ D^w_{u,v} \}$ to the Molev-Sagan
  equations for $H^*_T(X;\Z)$ satisfies
  \[
  D^w_{u,v} \ = \ \sum_{\tau\in W^P} A^w_{u,v}(\tau)
  D^\tau_{\tau,\tau} \,.
  \]

  \noin{\rm(b)} The class $A^w_{u,v}(\tau)$ is non-zero only if
  $u \leq w$, $v \leq w$, and $\tau \leq w$.\smallskip

  \noin{\rm(c)} If $u \leq w$ and $v \leq w$, then $A^w_{u,v}(w)$ is
  rationally positive.
\end{prop}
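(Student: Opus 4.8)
The plan is to prove the proposition by induction on $\ell(w)$, establishing simultaneously parts (a), (b), (c) together with the auxiliary vanishing statement that, for every solution $\{D^w_{u,v}\}$, one has $D^w_{u,v}=0$ whenever $u\not\le w$ or $v\not\le w$. Fix $w$ and assume everything for all $w'$ with $\ell(w')<\ell(w)$. The mechanism is to use the first family of Molev-Sagan equations to push the lower index $u$ upward toward $w$, and then the second family to push $v$ upward toward $w$: whenever $u\ne w$, \Lemma{beta}(b) supplies $\be\in\Delta\ssm\Delta_P$ with $C^w_{s_\be,w}\ne C^u_{s_\be,u}$, and the first Molev-Sagan equation can then be solved,
\[
D^w_{u,v}\;=\;\frac{1}{C^w_{s_\be,w}-C^u_{s_\be,u}}\Bigl(\sum_{u'>u}C^{u'}_{s_\be,u}\,D^w_{u',v}\;-\;\sum_{w'<w}C^w_{s_\be,w'}\,D^{w'}_{u,v}\Bigr),
\]
where by the Chevalley formula each $u'$ in the first sum satisfies $\ell(u')=\ell(u)+1$ and each $w'$ in the second sum satisfies $w'<w$ and $\ell(w')=\ell(w)-1$. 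Since $\ell$ of the lower index strictly increases at each such substitution and is bounded on the finite set $W^P$, iterating this (and its mirror in the $v$-slot) terminates and expresses an arbitrary $D^w_{u,v}$ as a $\La_0$-linear combination, with coefficients depending only on the structure constants $C$, of the free parameter $D^w_{w,w}$ and of terms $D^{w'}_{u'',v''}$ with $w'<w$. Feeding the latter terms into the inductive hypothesis yields part (a) and the support condition $\tau\le w$ in (b). The base case $\ell(w)=0$, where $w$ is the minimum of $W^P$ and the second sums above are empty, is subsumed by this reduction, which then bottoms out at $D^w_{u',v}$ with $u'$ maximal in $W^P$; such a term is $0$ since both sums are empty, unless $u=v=w$.

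Next I would prove the vanishing statement. If $u\not\le w$, every lower index produced during the $u$-reduction of $D^w_{u,v}$ is $\ge u$ in Bruhat order, hence again $\not\le w$ and in particular never equal to $w$, so $D^w_{w,v}$ never appears and $D^w_{u,v}$ is expressed solely through terms $D^{w'}_{\tilde u,v}$ with $w'<w$ and $\tilde u\ge u$; for these, $\tilde u\le w'$ would force $u\le w$, so $\tilde u\not\le w'$ and $D^{w'}_{\tilde u,v}=0$ by induction, giving $D^w_{u,v}=0$. The case $u\le w$, $v\not\le w$ is analogous: the $u$-reduction gives $D^w_{u,v}=c\,D^w_{w,v}$ plus level-$(<\ell(w))$ terms whose $v$-index stays $\not\le w$, hence $\not\le w'$, and so vanish by induction; then the $v$-reduction of $D^w_{w,v}$ stays outside the set of $v'\le w$ and likewise bottoms out in vanishing terms. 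Together with the support statement already obtained, this is the full content of (b).

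For (c), assume $u\le w$ and $v\le w$ and consider $A^w_{u,v}(w)$, the coefficient of $D^w_{w,w}$; it factors as $c\,c'$, where $c$ (resp.\ $c'$) is the coefficient of $D^w_{w,v}$ (resp.\ of $D^w_{w,w}$) accumulated during the $u$-reduction (resp.\ the $v$-reduction), the level-$(<\ell(w))$ terms contributing nothing to $D^w_{w,w}$ because by induction their $D^\tau_{\tau,\tau}$-expansions are supported on $\tau\le w'<w$. The point is that $c$ is a sum, over saturated chains $u=u_0\lessdot u_1\lessdot\cdots\lessdot u_k=w$ of elements $\le w$ — only indices $\le w$ can ever produce $w$, so no contributing chain leaves this interval, and the minus sign in the Molev-Sagan equation attaches only to the level-$(<\ell(w))$ terms, not to these chains — of the products $\prod_i C^{u_{i+1}}_{s_{\be_i},u_i}\big/\bigl(C^w_{s_{\be_i},w}-C^{u_i}_{s_{\be_i},u_i}\bigr)$. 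Each numerator is a non-negative integer by the Chevalley formula, and each denominator is a non-zero non-negative class by \Lemma{beta}(a) (using $u_i\le w$) combined with the choice of $\be_i$ from \Lemma{beta}(b); hence every summand of $c$ is rationally positive. Moreover \Lemma{beta}(c) guarantees at least one such chain from $u$ to $w$ with all factors non-zero, so $c$ is a non-empty sum of rationally positive classes and is therefore rationally positive; the same applies to $c'$, and hence $A^w_{u,v}(w)=c\,c'$ is rationally positive (and equals $1$ when $u=v=w$).

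The hard part will be (c), and specifically ruling out any cancellation against the $w$-term: the reduction must be organized so that the only coefficients that can ever reach $D^w_{w,w}$ originate from the portion of the recursion tree lying among elements $\le w$, where each coefficient is visibly a ratio of non-negative classes, and \Lemma{beta}(c) is exactly what certifies that this contribution does not vanish. The remaining bookkeeping — that every $w'$ appearing really is $<w$, that lower indices only increase, and that the extracted coefficients $A^w_{u,v}(\tau)$ depend only on the structure constants and not on the chosen solution — is routine but must be tracked carefully.
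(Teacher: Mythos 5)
Your proof is correct and follows essentially the same route as the paper: you invert the Molev--Sagan equations using the nonzero differences $C^w_{s_\be,w}-C^u_{s_\be,u}$ supplied by \Lemma{beta}(b), push the lower indices up to $w$ modulo lower-$w$ terms handled by induction, and deduce that $A^w_{u,v}(w)$ is rationally positive from \Lemma{beta}(a) and (c). The differences are only organizational --- you induct on $\ell(w)$, carry the vanishing $D^w_{u,v}=0$ for $u\not\le w$ or $v\not\le w$ as an explicit part of the induction, and unroll the recursion into a sum over saturated chains in the interval $[u,w]$, whereas the paper defines the classes $A^w_{u,v}(\tau)$ by one well-founded recursion over triples (pushing $v$ to $w$ first, then $u$) and gets the same vanishing implicitly from property (b).
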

\begin{proof}
  Let $(u,v,w) \in (W^P)^3$ be any triple.  Assume by induction that
  $A^{w'}_{u',v'}(\tau)$ has been defined for all triples $(u',v',w')$
  for which $w'<w$, or $w'=w$ and $v'>v$, or $w'=w$ and $v'=v$ and
  $u'>u$, and for all $\tau \in W^P$.  Assume also that these classes
  satisfy properties (a), (b), and (c).  We define the classes
  $A^w_{u,v}(\tau)$ for $\tau \in W^P$ as follows.

  Assume first that $v \neq w$.  According to \Lemma{beta}(b) we may
  choose $\be \in \Delta \ssm \Delta_P$ such that
  $C^w_{s_\be,w} \neq C^v_{s_\be,v}$.  Using this choice we define
  \[
  A^w_{u,v}(\tau) \ = \ \frac{1}{C^w_{s_\be,w} - C^v_{s_\be,v}} \left(
    \sum_{v'>v} C^{v'}_{s_\be,v} A^w_{u,v'}(\tau) - \sum_{w'<w}
    C^w_{s_\be,w'} A^{w'}_{u,v}(\tau) \right) .
  \]
  If $u \neq v = w$, then choose $\be \in \Delta \ssm \Delta_P$ such
  that $C^w_{s_\be,w} \neq C^u_{s_\be,u}$ and define
  \[
  A^w_{u,w}(\tau) \ = \ \frac{1}{C^w_{s_\be,w} - C^u_{s_\be,u}}
  \sum_{u'>u} C^{u'}_{s_\be,u} A^w_{u',w}(\tau) \,.
  \]
  Finally, if $u=v=w$ then define
  \[
  A^w_{w,w}(\tau) = \delta_{w,\tau} \,.
  \]

  We must check that the classes $A^w_{u,v}(\tau)$ satisfy properties
  (a), (b), and (c).  These properties are clear if $u=v=w$.  In
  addition, property (b) follows immediately from the definition of
  $A^w_{u,v}(\tau)$ together with the induction hypothesis.  Assume
  that $v \neq w$.  Then property (a) holds because
  \[
  \begin{split}
    (C^w_{s_\be,w} - C^v_{s_\be,v})\, D^w_{u,v} \
    &= \
    \sum_{v'>v} C^{v'}_{s_\be,v} D^w_{u,v'} \ - \
    \sum_{w'<w} C^w_{s_\be,w'} D^{w'}_{u,v} \\
    &= \
    \sum_{v'>v} C^{v'}_{s_\be,v} \sum_\tau A^w_{u,v'}(\tau) D^\tau_{\tau,\tau}
    - \sum_{w'<w} C^w_{s_\be,w'} \sum_\tau A^{w'}_{u,v}(\tau)
    D^\tau_{\tau,\tau} \\
    &= \
    \sum_\tau \left( \sum_{v'>v} C^{v'}_{s_\be,v} A^w_{u,v'}(\tau)
      - \sum_{w'<w} C^w_{s_\be,w'} A^{w'}_{u,v}(\tau) \right) D^\tau_{\tau,\tau} \\
    &= \
    (C^w_{s_\be,w} - C^v_{s_\be,v})\, \sum_\tau A^w_{u,v}(\tau)
    D^\tau_{\tau,\tau} \,.
  \end{split}
  \]
  In addition, if $u\leq w$ and $v < w$, then it follows from property
  (b) that
  \[
  A^w_{u,v}(w) \ = \ \frac{1}{C^w_{s_\be,w}-C^v_{s_\be,v}}
  \sum_{v'>v} C^{v'}_{s_\be,v} A^w_{u,v'}(w) \,,
  \]
  and parts (a) and (c) of \Lemma{beta} together with property (c) of
  the induction hypothesis imply that this sum is rationally positive.
  A similar argument shows that properties (a) and (c) hold when
  $u \neq v = w$.
\end{proof}

\begin{cor}\label{cor:unique_sol_ht}
  Let $\{D^w_{u,v}\}$ be any solution to the Molev-Sagan equations for
  the ring $H^*_T(X;\Z)$ that also satisfies the additional condition
  \[
  \forall\, w \in W^P \ \exists\, u,v \in W^P : \, u \leq w \,\text{
    and }\, v \leq w \,\text{ and }\, D^w_{u,v} = C^w_{u,v} \,.
  \]
  Then we have $D^w_{u,v} = C^w_{u,v}$ for all $u,v,w \in W^P$.
\end{cor}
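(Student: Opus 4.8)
The plan is to build directly on \Proposition{Auvwt}, which already expresses every solution $\{D^w_{u,v}\}$ of the Molev-Sagan equations as a $\La_0$-linear combination
\[
D^w_{u,v} \ = \ \sum_{\tau \in W^P} A^w_{u,v}(\tau)\, D^\tau_{\tau,\tau}
\]
of the ``diagonal'' entries, the coefficients $A^w_{u,v}(\tau)$ being universal, i.e.\ independent of the chosen solution.  Since the true structure constants $\{C^w_{u,v}\}$ also form a solution (by associativity of $H^*_T(X;\Z)$), the same identity holds with $D$ replaced by $C$.  Subtracting and writing $\delta^\tau = D^\tau_{\tau,\tau} - C^\tau_{\tau,\tau}$, the whole corollary reduces to the claim that $\delta^\tau = 0$ for every $\tau \in W^P$; once this is known, the displayed formula and its $C$-analogue give $D^w_{u,v} = C^w_{u,v}$ for every triple.

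To prove $\delta^w = 0$ I would induct on $\ell(w)$.  Fix $w$ and assume $\delta^\tau = 0$ whenever $\ell(\tau) < \ell(w)$.  By \Proposition{Auvwt}(b) the coefficient $A^w_{u,v}(\tau)$ vanishes unless $\tau \leq w$, and any $\tau \leq w$ with $\tau \neq w$ satisfies $\ell(\tau) < \ell(w)$; hence for arbitrary $u, v \in W^P$ the difference collapses to
\[
D^w_{u,v} - C^w_{u,v} \ = \ A^w_{u,v}(w)\, \delta^w \,.
\]
Now invoke the extra hypothesis of the corollary: choose $u, v \leq w$ with $D^w_{u,v} = C^w_{u,v}$, so that $A^w_{u,v}(w)\, \delta^w = 0$.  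Since $u \leq w$ and $v \leq w$, \Proposition{Auvwt}(c) says that $A^w_{u,v}(w)$ is rationally positive, hence a non-zero element of the field $\La_0$, and we conclude $\delta^w = 0$.  The case $\ell(w) = 0$, that is $w = 1$, is just the instance $u = v = \tau = w = 1$ of this argument, using $A^1_{1,1}(1) = 1$, so it needs no separate treatment.

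With $\delta^\tau = 0$ established for all $\tau$, the representation in \Proposition{Auvwt}(a) immediately yields $D^w_{u,v} = \sum_\tau A^w_{u,v}(\tau)\, C^\tau_{\tau,\tau} = C^w_{u,v}$ for every $(u,v,w)$, which is the assertion.  The one genuinely delicate point in the argument is the step where rational positivity of $A^w_{u,v}(w)$ is used to deduce that it is invertible in $\La_0$; this is precisely the content of \Proposition{Auvwt}(c), which in turn rests on Graham's positivity theorem and parts (a) and (c) of \Lemma{beta}.  Everything else is bookkeeping, so I do not anticipate a real obstacle.
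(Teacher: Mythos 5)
Your proof is correct and follows essentially the same route as the paper: reduce to the diagonal entries via \Proposition{Auvwt}(a), induct (your induction on $\ell(w)$ is equivalent to the paper's induction over the Bruhat order), and use the additional hypothesis together with the rational positivity, hence non-vanishing, of $A^w_{u,v}(w)$ from \Proposition{Auvwt}(b)--(c) to force the diagonal entries to agree.
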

\begin{proof}
  In view of \Proposition{Auvwt} it is enough to show that
  $D^\tau_{\tau,\tau} = C^\tau_{\tau,\tau}$ for all $\tau \in W^P$.
  Let $w \in W^P$ and assume by induction that
  $D^\tau_{\tau,\tau} = C^\tau_{\tau,\tau}$ for all $\tau < w$.
  Choose $u,v \in W^P$ such that $u\leq w$, $v \leq w$, and
  $D^w_{u,v} = C^w_{u,v}$.  Then we have
  \[
  A^w_{u,v}(w) D^w_{w,w}
  = D^w_{u,v} - \sum_{\tau < w} A^w_{u,v}(\tau) D^\tau_{\tau,\tau}
  = C^w_{u,v} - \sum_{\tau < w} A^w_{u,v}(\tau) C^\tau_{\tau,\tau}
  = A^w_{u,v}(w) C^w_{w,w} \,.
  \]
  Since $A^w_{u,v}(w) \neq 0$ by \Proposition{Auvwt}(c), this implies
  that $D^w_{w,w} = C^w_{w,w}$.
\end{proof}

\begin{remark}
  The following instances of the additional condition of
  \Corollary{unique_sol_ht} have appeared: $D^w_{w,w} = C^w_{w,w}$ in
  \cite{molev.sagan:littlewood-richardson, knutson.tao:puzzles};
  $D^w_{w,1} = 1$ in \cite{mihalcea:equivariant*1}; and
  $D^w_{1,1} = \delta_{1,w}$ in \cite{mihalcea:equivariant*1}.
\end{remark}

\begin{cor}\label{cor:unique_alg_ht}
  Let $H$ be any associative $\La$-algebra that is also a free
  $\La$-module with a basis $\{ \sigma_u : u \in W^P \}$ indexed by
  $W^P$.  Assume that $H$ satisfies the (two-sided) Chevalley formula
  \[
  \sigma_{s_\be} \cdot \sigma_u \ = \ \sigma_u \cdot \sigma_{s_\be} \ = \
  \sum_{w \in W^P} C^{w}_{s_\be,u}\, \sigma_w
  \]
  for all $u \in W^P$ and $\be \in \Delta \ssm \Delta_P$, as well as
  the identity $\sigma_1 \cdot \sigma_1 = \sigma_1$.  Then the
  homomorphism of $\La$-modules $H \to H^*_T(X;\Z)$ defined by
  $\sigma_u \mapsto [X^u]$ is an isomorphism of rings.
\end{cor}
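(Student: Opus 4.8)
The plan is to reduce the statement to \Corollary{unique_sol_ht} by extracting from $H$ a vector of structure constants and verifying that it solves the Molev--Sagan equations with the required normalization. First I would define $D^w_{u,v} \in \La$ by the expansions $\sigma_u \cdot \sigma_v = \sum_w D^w_{u,v}\, \sigma_w$, using that $\{\sigma_u\}$ is a $\La$-basis of $H$. Because $H$ is associative, the two identities $(\sigma_{s_\be} \cdot \sigma_u) \cdot \sigma_v = \sigma_{s_\be} \cdot (\sigma_u \cdot \sigma_v)$ and $\sigma_u \cdot (\sigma_v \cdot \sigma_{s_\be}) = (\sigma_u \cdot \sigma_v) \cdot \sigma_{s_\be}$ hold in $H$; expanding both sides in the basis and using the hypothesized (two-sided) Chevalley formula for multiplication by $\sigma_{s_\be}$, one reads off exactly the two families of Molev--Sagan equations of \Definition{ms_ht}. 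This is the routine bookkeeping step: one isolates the coefficient of $\sigma_w$ on each side, moves the diagonal term $C^w_{s_\be,w} D^w_{u,v}$ (resp.\ $C^u_{s_\be,u} D^w_{u,v}$) to the left, and matches the off-diagonal sums $\sum_{u'>u}$, $\sum_{v'>v}$, $\sum_{w'<w}$.

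Next I would supply the additional normalizing condition needed to invoke \Corollary{unique_sol_ht}. The hypothesis $\sigma_1 \cdot \sigma_1 = \sigma_1$ says $D^w_{1,1} = \delta_{1,w}$, and since $C^w_{1,1} = \delta_{1,w}$ as well (the class $[X^1]=1$ is the multiplicative identity of $H^*_T(X;\Z)$), we may take $u=v=1$ for every $w$, so the condition
\[
\forall\, w \in W^P \ \exists\, u,v : u \leq w,\ v \leq w,\ D^w_{u,v} = C^w_{u,v}
\]
is satisfied trivially (alternatively, as noted in the Remark after \Corollary{unique_sol_ht}, $D^w_{w,1} = 1$ works, using that $\sigma_w \cdot \sigma_1 = \sigma_w$, which follows from $\sigma_1$ being a two-sided identity). \Corollary{unique_sol_ht} then gives $D^w_{u,v} = C^w_{u,v}$ for all $u,v,w \in W^P$. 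Hence the $\La$-linear map $H \to H^*_T(X;\Z)$, $\sigma_u \mapsto [X^u]$, which is an isomorphism of $\La$-modules because it sends one basis to another, carries the product of $H$ to the product of $H^*_T(X;\Z)$, so it is a ring isomorphism.

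The one point requiring a little care — and the only place an objection could arise — is the justification that $\sigma_1$ is a two-sided multiplicative identity of $H$, so that $D^w_{u,v}=C^w_{u,v}$ for all triples actually forces the product on $H$ to agree with that on cohomology rather than merely agreeing on structure constants relative to a possibly ill-behaved unit. But this is immediate: once $D^w_{u,v} = C^w_{u,v}$ for all $u,v,w$, the multiplication table of $H$ in the basis $\{\sigma_u\}$ coincides term-by-term with that of $H^*_T(X;\Z)$ in the basis $\{[X^u]\}$, so the bijection of bases is automatically multiplicative; one does not even need to argue separately about the unit. I expect no genuine obstacle here — the content of the corollary is entirely absorbed by \Proposition{Auvwt} and \Corollary{unique_sol_ht}, and the role of the present statement is just to repackage an abstract algebra $H$ as a solution of the Molev--Sagan equations.
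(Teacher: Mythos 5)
Your proposal is correct and follows essentially the same route as the paper: extract the structure constants $D^w_{u,v}$ of $H$, use associativity together with the two-sided Chevalley hypothesis to verify the Molev--Sagan equations of \Definition{ms_ht}, use $\sigma_1\cdot\sigma_1=\sigma_1$ to get $D^w_{1,1}=\delta_{1,w}=C^w_{1,1}$, and invoke \Corollary{unique_sol_ht}. Your closing worry about the unit is indeed a non-issue, exactly as you resolve it, since equality of all structure constants already makes the basis bijection multiplicative.
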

\begin{proof}
  Let $\{D^w_{u,v}\}$ be the structure constants of $H$ with respect
  to the basis $\{ \sigma_u \}$.  Then the relations
  $(\sigma_{s_\be} \cdot \sigma_u) \cdot \sigma_v = \sigma_{s_\be}
  \cdot (\sigma_u \cdot \sigma_v)$
  and
  $\sigma_u \cdot (\sigma_v \cdot \sigma_{s_\be}) = (\sigma_u \cdot
  \sigma_v) \cdot \sigma_{s_\be}$
  imply that $\{ D^w_{u,v} \}$ is a solution to the Molev-Sagan
  equations.  The identity $\sigma_1 \cdot \sigma_1 = \sigma_1$
  implies that $D^w_{1,1} = \delta_{w,1} = C^w_{1,1}$ for all
  $w \in W^P$.  It therefore follows from \Corollary{unique_sol_ht}
  that $D^w_{u,v} = C^w_{u,v}$ for all $u,v,w \in W^P$, as required.
\end{proof}

The following conjecture implies that the vector space of all
solutions to the Molev-Sagan equations has dimension equal to the
cardinality of $W^P$.

\begin{conj}
  The classes $A^w_{u,v}(\tau)$ of \Proposition{Auvwt} are uniquely
  determined by property {\rm(a)}.  Equivalently, for each fixed
  $\tau \in W^P$ the vector $\{ A^w_{u,v}(\tau) \}$ is the unique
  solution to the Molev-Sagan equations that also satisfies
  $A^w_{w,w}(\tau) = \delta_{w,\tau}$ for all $w \in W^P$.
\end{conj}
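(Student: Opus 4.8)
The plan is to prove the conjecture by showing that for every $\tau\in W^P$ the tuple $\{A^w_{u,v}(\tau)\}_{u,v,w}$ is itself a solution of the Molev--Sagan equations for $H^*_T(X;\Z)$, with diagonal $A^\tau_{\tau,\tau}(\tau)=1$ and $A^w_{w,w}(\tau)=0$ for $w\neq\tau$. Both forms of the conjecture follow at once from this. By \Proposition{Auvwt}(a) every solution $\{D\}$ satisfies $D^w_{u,v}=\sum_\sigma A^w_{u,v}(\sigma)D^\sigma_{\sigma,\sigma}$, so a solution with diagonal $(\delta_{\sigma,\tau})_\sigma$ equals $\{A(\tau)\}$; this gives uniqueness in the "equivalently" form, and existence is exactly the statement to be proved. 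Likewise, if $\{A'\}$ is any tuple satisfying property~(a), applying~(a) to the solution $\{A(\sigma)\}$ gives $A^w_{u,v}(\sigma)=\sum_\tau A'^w_{u,v}(\tau)A^\tau_{\tau,\tau}(\sigma)=A'^w_{u,v}(\sigma)$, so $\{A'\}=\{A\}$; and since \Proposition{Auvwt}(a) already embeds the solution space into $\La_0^{W^P}$ via the diagonal map, the solution space then has dimension exactly $|W^P|$. So it remains only to produce, for each $\tau$, a solution with diagonal $e_\tau$.

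Solutions will arise from multiplication by a fixed cohomology class. Put $\mathcal H=H^*_T(X)\otimes_\La\La_0$, with the Schubert basis $\sigma_u=[X^u]$ and the dual basis $\sigma_u^\vee=[X_u]$, characterized by $\int_X\sigma_u\,\sigma_v^\vee=\delta_{u,v}$ (here $\int_X$ is the equivariant pushforward to a point); thus $C^w_{u,v}=\int_X\sigma_u\,\sigma_v\,\sigma_w^\vee$. For $p\in\mathcal H$ set $D(p)^w_{u,v}:=\int_X\sigma_u\,\sigma_v\,p\,\sigma_w^\vee$. I would verify that $\{D(p)^w_{u,v}\}$ solves the Molev--Sagan equations by a telescoping computation: for $\be\in\Delta\ssm\Delta_P$, the Chevalley formula in $H^*_T(X)$ gives $\sum_{u'>u}C^{u'}_{s_\be,u}\sigma_{u'}=\sigma_{s_\be}\sigma_u-C^u_{s_\be,u}\sigma_u$, while expanding $\sigma_{s_\be}\sigma_w^\vee$ in the dual basis and pairing against the $\sigma_{w'}$ yields the dual identity $\sum_{w'<w}C^w_{s_\be,w'}\sigma_{w'}^\vee=\sigma_{s_\be}\sigma_w^\vee-C^w_{s_\be,w}\sigma_w^\vee$; substituting both into $\sum_{u'>u}C^{u'}_{s_\be,u}D(p)^w_{u',v}-\sum_{w'<w}C^w_{s_\be,w'}D(p)^{w'}_{u,v}$, the common term $\int_X\sigma_{s_\be}\sigma_u\sigma_v\,p\,\sigma_w^\vee$ cancels, leaving $(C^w_{s_\be,w}-C^u_{s_\be,u})D(p)^w_{u,v}$; the mirror family holds because $D(p)^w_{u,v}$ is symmetric in $u$ and $v$. (One checks $D(1)^w_{u,v}=C^w_{u,v}$, and $p\mapsto\{D(p)\}$ is injective since $D(p)^w_{1,1}$ equals the coefficient of $\sigma_w$ in $p$; by \Proposition{Auvwt}(a) this already forces \emph{every} solution to be of the form $\{D(p)\}$.)

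Finally, to realize the diagonal $e_\tau$ one needs $p=p_\tau$ with $D(p_\tau)^w_{w,w}=\int_X p_\tau\,(\sigma_w^2\sigma_w^\vee)=\delta_{w,\tau}$ for all $w$; since $\int_X$ is a perfect pairing on $\mathcal H$, this has a unique solution provided $\{\sigma_w^2\sigma_w^\vee:w\in W^P\}$ is a $\La_0$-basis of $\mathcal H$. I would prove this by localizing at the $T$-fixed points $vP$: $\sigma_w|_v=[X^w]|_v\neq0$ exactly when $w\le v$, and $\sigma_w^\vee|_v=[X_w]|_v\neq0$ exactly when $v\le w$, so $(\sigma_w^2\sigma_w^\vee)|_v=(\sigma_w|_v)^2(\sigma_w^\vee|_v)$ vanishes unless $v=w$. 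Hence in $\mathcal H\cong\prod_{v\in W^P}\La_0$ each $\sigma_w^2\sigma_w^\vee$ is a nonzero scalar multiple of the idempotent at $wP$ --- the scalar $(\sigma_w|_w)^2\,\sigma_w^\vee|_w$ being a product of nonzero roots (equivariant Euler classes of the normal spaces of $X^w$ and $X_w$ at $wP$), hence a unit in $\La_0$ --- and such classes obviously form a basis.

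Assembling the steps, $\{D(p_\tau)^w_{u,v}\}$ is a solution with diagonal $e_\tau$, so $A^w_{u,v}(\tau)=D(p_\tau)^w_{u,v}$ by \Proposition{Auvwt}(a); in particular each $\{A(\tau)\}$ is a solution and the conjecture follows as in the first paragraph. The step needing the most care is the Molev--Sagan verification for $\{D(p)\}$ --- the bookkeeping of which coefficients $C^{u'}_{s_\be,u}$ and $C^w_{s_\be,w'}$ vanish, and the dual Chevalley identity for the $\sigma_w^\vee$. I would also remark that the argument is purely rational and uses no positivity, so, unlike the proof of \Corollary{unique_sol_ht}, it should apply verbatim in the Kac--Moody setting of \Remark{kac-moody} once $\int_X$ is replaced by the appropriate perfect pairing between Schubert classes and their duals.
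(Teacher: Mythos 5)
The paper does not prove this statement: it is left as an open conjecture (the surrounding text only records that it would imply that the space of solutions of the Molev--Sagan equations has dimension $\#W^P$), so there is no proof of record to compare yours with. Judged on its own terms, your argument looks correct and complete, and it would settle the conjecture.

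Your reduction is the right one: once each vector $\{A^w_{u,v}(\tau)\}$ is known to be a solution of the Molev--Sagan equations (its diagonal is $\delta_{w,\tau}$ by construction), both uniqueness assertions follow formally from property (a) of \Proposition{Auvwt}, exactly as in your first paragraph. The substantive step is the family $D(p)^w_{u,v}=\int_X\sigma_u\,\sigma_v\,p\,\sigma_w^\vee$: your telescoping verification of the Molev--Sagan equations is correct, the dual Chevalley identity $\sigma_{s_\be}\sigma_w^\vee=\sum_{w'}C^w_{s_\be,w'}\sigma_{w'}^\vee$ holding because $\{[X_w]\}$ is the $\int_X$-dual basis of $\{[X^w]\}$, and the pairing $\int_X[X^u]\cdot[X_v]=\delta_{u,v}$ stays perfect after extension to $\La_0$. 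The construction of $p_\tau$ is also sound: for $v\neq w$ support considerations alone give $(\sigma_w^2\sigma_w^\vee)|_v=0$ (you do not need the ``exactly when''), while at $v=w$ both restrictions are equivariant Euler classes of normal spaces at the smooth point $w.P$, hence units in $\La_0$, so these $\#W^P$ classes are independent in a space of that dimension and admit a dual family $\{p_\tau\}$. Applying property (a) to the solution $D(p_\tau)$ gives $A^w_{u,v}(\tau)=D(p_\tau)^w_{u,v}$; in particular the recursively constructed classes are independent of the choices of $\be$, each $\{A(\tau)\}$ is a solution, and the solution space is exactly $\{D(p):p\in H^*_T(X;\Z)\otimes_\La\La_0\}$, of dimension $\#W^P$ as anticipated after the conjecture. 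As a by-product, localization at the fixed points turns your $p_\tau$ into the closed formula $A^w_{u,v}(\tau)=\bigl(\sigma_u|_\tau\,\sigma_v|_\tau\,\sigma_w^\vee|_\tau\bigr)\big/\bigl((\sigma_\tau|_\tau)^2\,\sigma_\tau^\vee|_\tau\bigr)$, which reproduces \Example{molevsagan} and is visibly consistent with parts (b) and (c) of \Proposition{Auvwt}. One caveat concerns only your closing remark: in the Kac--Moody setting of \Remark{kac-moody} the module $H^*_T(X;\Z)$ has infinite rank, so the dual-basis and perfect-pairing steps require completions and do not carry over verbatim; the finite-dimensional proof itself stands.
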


\begin{remark}\label{remark:kac-moody}
  The results proved in \Section{ms.ht} are true also when $X = G/P$
  is a homogeneous space defined by a Kac-Moody group $G$ and a
  parabolic subgroup $P$, with the same proofs.  The required
  Chevalley formula for the equivariant cohomology of such spaces is
  proved in Kumar's book \cite[Thm.~11.1.7]{kumar:kac-moody}.
\end{remark}

\begin{remark}\label{remark:generator}
  Ciocan-Fontanine brought to our attention that, under certain
  hypotheses, \cite[Lemma
  4.1.3]{ciocan-fontanine.kim.ea:abeliannonabelian} implies that the
  {\em localized\/} equivariant cohomology ring
  $H^*_T(X;\Z) \otimes_\La \La_0$ is generated by divisors.  We next
  give an alternative proof of this conclusion.  Let
  $D = \sum_{\be\in\Delta\ssm\Delta_P} [X^{s_\be}]$ be the sum of the
  Schubert divisors in $X$.
\end{remark}

\begin{lemma}\label{lemma:generator}
  The ring $H^*_T(X;\Z) \otimes_\La \La_0$ is generated by $D$ as a
  $\La_0$-algebra.
\end{lemma}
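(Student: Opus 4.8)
The plan is to pass to the localized ring $R:=H^*_T(X;\Z)\otimes_\La\La_0$, identify it with a product of copies of $\La_0$ indexed by $W^P$ via restriction to the fixed points, check that $D$ has pairwise distinct components there, and then conclude by a minimal-polynomial (Lagrange interpolation) argument.

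First I would set up the localization. Since $H^*_T(X;\Z)$ is a free $\La$-module on the Schubert basis $\{[X^u]:u\in W^P\}$ and restriction to the fixed locus $X^T=\{u.P:u\in W^P\}$ is injective (well known; see e.g.\ \cite{kostant.kumar:nil*1}), flatness of $\La_0$ over $\La$ makes the restriction map an injective homomorphism of $\La_0$-algebras $R\hookrightarrow\prod_{u\in W^P}\La_0$, $\sigma\otimes 1\mapsto(\sigma|_u)_u$, between $\La_0$-vector spaces of the same finite dimension $|W^P|$; hence it is an isomorphism, with componentwise multiplication on the target. In particular each $[X^u]|_u$ is a unit in $\La_0$: otherwise the matrix $([X^v]|_u)_{u,v}$, which is triangular with respect to a linear extension of the Bruhat order (as $[X^v]|_u=0$ unless $v\le u$) and has diagonal entries $[X^u]|_u$, would be singular, contradicting injectivity.

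Next I would compute the components of $D$. For $u\in W^P$ and $\be\in\Delta\ssm\Delta_P$, restricting the identity $[X^{s_\be}]\cdot[X^u]=\sum_v C^v_{s_\be,u}[X^v]$ to the fixed point $u.P$ kills every term except $v=u$, since a nonzero term requires both $C^v_{s_\be,u}\neq 0$ (forcing $u\le v$) and $[X^v]|_u\neq 0$ (forcing $v\le u$). Cancelling the unit $[X^u]|_u$ gives $[X^{s_\be}]|_u=C^u_{s_\be,u}=c_T(\omega_\be-u.\omega_\be)$ by the Chevalley formula for $H^*_T(X;\Z)$, so $D|_u=c_T(\varpi-u.\varpi)$ with $\varpi:=\sum_{\be\in\Delta\ssm\Delta_P}\omega_\be$. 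To see the $D|_u$ are pairwise distinct I would note that $\varpi$ is dominant with $\langle\varpi,\ga^\vee\rangle=1$ for $\ga\in\Delta\ssm\Delta_P$ and $\langle\varpi,\ga^\vee\rangle=0$ for $\ga\in\Delta_P$, so $\mathrm{Stab}_W(\varpi)=W_P$ by \cite[Thm.~1.12]{humphreys:reflection}; hence $u\mapsto u.\varpi$ is injective on the minimal representatives $W^P$. Since $\varpi-u.\varpi$ always lies in the root lattice and the classes $c_T(\be)$ ($\be\in\Delta$) are algebraically independent, $c_T$ is injective on the root lattice, so $D|_u=D|_{u'}$ forces $u.\varpi=u'.\varpi$ and therefore $u=u'$.

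Finally I would conclude: in the semisimple $\La_0$-algebra $\prod_{u\in W^P}\La_0$ the element $D$ has $|W^P|$ pairwise distinct ``eigenvalues'' $D|_u$, so its minimal polynomial over $\La_0$ is $\prod_{u\in W^P}(x-D|_u)$, of degree $|W^P|$; thus the subalgebra $\La_0[D]\subseteq R$ has $\La_0$-dimension $|W^P|=\dim_{\La_0}R$, whence $\La_0[D]=R$. I do not expect a genuine obstacle here; the points needing the most care are the standard Schubert-calculus facts $[X^v]|_u=0$ for $v\not\le u$ and $[X^u]|_u\neq 0$ (both immediate from the geometry of Schubert varieties, or, as above, from injectivity of fixed-point restriction), together with the identification $\mathrm{Stab}_W(\varpi)=W_P$.
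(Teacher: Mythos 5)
Your proof is correct, and its core coincides with the paper's: both arguments reduce to the fact that the ``eigenvalues'' of multiplication by $D$ are the fixed-point restrictions $D_u = c_T(\omega - u.\omega)$ with $\omega = \sum_{\be\in\Delta\ssm\Delta_P}\omega_\be$, that these are pairwise distinct because the stabilizer of $\omega$ in $W$ is exactly $W_P$, and hence that the minimal polynomial of $D$ over $\La_0$ has degree $\#W^P$, so the powers of $D$ span. The routes differ in one technical respect. The paper never leaves the Schubert basis: it observes that the endomorphism $\mu_D$ of multiplication by $D$ is upper-triangular with diagonal entries $D_u$ (quoting known restriction formulas for $D_u$), so no localization theorem is needed. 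You instead invoke injectivity of restriction to the $T$-fixed locus (Kostant--Kumar/GKM) to identify $H^*_T(X;\Z)\otimes_\La\La_0$ with $\prod_{u\in W^P}\La_0$ and literally diagonalize, and you rederive $D|_u = c_T(\omega-u.\omega)$ by restricting the Chevalley expansion of $[X^{s_\be}]\cdot[X^u]$ to the point $u.P$. Your version requires the extra (standard) input of fixed-point injectivity, but in exchange it is self-contained about the value of $D|_u$ and makes the semisimple structure of the localized ring explicit; the paper's version is slightly leaner, using only triangularity of $\mu_D$ plus a citation for $D_u$. The delicate points you flag---$[X^v]|_u=0$ unless $v\le u$, nonvanishing of $[X^u]|_u$, $\mathrm{Stab}_W(\omega)=W_P$, and injectivity of $c_T$ on the root lattice---are all handled correctly, so there is no gap.
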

\begin{proof}
  Consider the $\La$-linear endomorphism
  $\mu_D : H^*_T(X;\Z) \to H^*_T(X;\Z)$ defined as multiplication by
  $D$.  Since multiplication by $D$ can be written in the form
  $D \cdot [X^u] = D_u [X^u] + \sum_{w>u} c(u;w)\,[X^w]$, where
  $c(u;w) \in \La$ and $D_u$ is the restriction of $D$ to the
  $T$-fixed point $u.P$, it follows that $\mu_D$ is represented by an
  upper-triangular matrix.  The diagonal entries are given by
  $D_u = c_T(\omega - u.\omega)$, where
  $\omega = \sum_{\be \in \Delta\ssm\Delta_P} \omega_\be$ (see
  \cite{kostant.kumar:t-equivariant} or e.g.\
  \cite[Thm.~8.1]{buch.mihalcea:curve}).  Since we have
  $v.\omega = \omega$ if and only if $v \in W_P$ \cite[V,
  \S4.6]{bourbaki:elements*51}, we deduce that $\mu_D$ has distinct
  diagonal entries.  This implies that the minimal polynomial of
  $\mu_D$ is equal to the characteristic polynomial, so the elements
  $1, D, D^2, \dots, D^{\#W_P-1}$ are linearly independent over
  $\La_0$ and span $H^*_T(X;\Z)\otimes_\La \La_0$.
\end{proof}

In relation to \cite{ciocan-fontanine.kim.ea:abeliannonabelian},
consider the $T$-equivariant embedding $X \subset \bP(V)$, where
$V = H^0(X,\cO_X(D))$.  In order to apply
\cite[Lemma~4.1.3]{ciocan-fontanine.kim.ea:abeliannonabelian} to this
situation, one must show that the restriction map
$H^*(\bP(V)^T;\C) \to H^*(X^T;\C) = \C^{\# W^P}$ is surjective.  In
fact, this follows from \Lemma{generator} together with the
commutative diagram used in the proof of
\cite[Lemma~4.1.3]{ciocan-fontanine.kim.ea:abeliannonabelian}.
However, it is not in general true that $\bP(V)$ has isolated
$T$-fixed points.  For example, this fails when $X$ is an {\em adjoint
  flag variety\/} in the sense that $V = \Lie(G)$ is the adjoint
representation of $G$, see e.g.\ \cite{chaput.perrin:quantum}.

\subsection{Equivariant quantum $K$-theory}

We finally show that the ring $\QK_T(X)$ is uniquely determined by
multiplication with the divisor classes and the identity element.  Our
argument is based on linear algebra and could also be applied to the
equivariant $K$-theory ring $K^T(X)$ or the equivariant quantum
cohomology ring $\QH_T(X)$.  For $u,v,w \in W^P$ we define the power
series
\[
N^w_{u,v} \ = \ \sum_{d \geq 0} N^{w,d}_{u,v}\, q^d
\]
in the ring
$\Gq = \Gamma\llbracket q_\be : \be \in \Delta \ssm
\Delta_P\rrbracket$.  The product in $\QK_T(X)$ is given by
\[
\cO^u \star \cO^v \ = \ \sum_{w\in W^P} N^w_{u,v}\, \cO^w \,.
\]
Let $\Gq_0$ be the field of fractions of $\Gq$.

\begin{defn}\label{defn:ms_qkt}
  Let $\{D^w_{u,v}\}$ be a vector of classes $D^w_{u,v} \in \Gq_0$
  indexed by triples $(u,v,w) \in (W^P)^3$.  We say that this vector
  satisfies the generalized {\em Molev-Sagan equations\/} for
  $\QK_T(X)$ if for all $u,v,w \in W^P$ and
  $\be \in \Delta \ssm \Delta_P$ we have
  \[
  \sum_{u' \in W^P} N^{u'}_{s_\be,u} D^w_{u',v} \ = \
  \sum_{w' \in W^P} N^w_{s_\be,w'} D^{w'}_{u,v} \ = \
  \sum_{v' \in W^P} N^{v'}_{s_\be,v} D^w_{u,v'} \,.
  \]
\end{defn}

The equalities
$(\cO^{s_\be} \star \cO^u) \star \cO^v = \cO^{s_\be} \star (\cO^u
\star \cO^v) = \cO^u \star (\cO^{s_\be} \star \cO^v)$
imply that the structure constants $\{N^w_{u,v}\}$ form a solution to
the Molev-Sagan equations for $\QK_T(X)$.  Notice that if each
structure constant $N^w_{s_\be,u}$ in \Definition{ms_qkt} is replaced
with $C^w_{s_\be,u}$, then the Molev-Sagan equations for $\QK_T(X)$
turn into the Molev-Sagan equations for $H^*_T(X;\Z)$.  One noteworthy
difference between the two cases is that the structure constants
$N^w_{u,v}$ of $\QK_T(X)$ may be non-zero when $u \not\leq w$ and
$v \not\leq w$.  For this reason the Molev-Sagan equations for
$\QK_T(X)$ are not triangular, which makes it time-consuming to solve
them.  Another obstruction to solving the equations is that the
constants $N^w_{s_\be,u}$ are known only when $X$ is a cominuscule
variety.  Here we will only discuss how the ring $\QK_T(X)$ is
determined by the Molev-Sagan equations.  A systematic method for
solving the Molev-Sagan equations for the equivariant quantum
cohomology ring $\QH_T(X)$ can be found in
\cite{mihalcea:equivariant*1}.

\begin{prop}\label{prop:unique_sol_qkt}
  Let $\{D^w_{u,v}\}$ be any solution to the Molev-Sagan equations for
  $\QK_T(X)$ that also satisfies the additional condition
  \[
  \forall\, w \in W^P \ \exists\, u,v \in W^P : \, u \leq w \,\text{
    and }\, v \leq w \,\text{ and }\, D^w_{u,v} = N^w_{u,v} \,.
  \]
  Then we have $D^w_{u,v} = N^w_{u,v}$ for all $u,v,w \in W^P$.
\end{prop}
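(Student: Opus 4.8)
The plan is to follow the philosophy of \Section{ms.ht}, reducing the assertion to the fact that the localized ring $R:=\QK_T(X)\otimes_{\Gq}\Gq_0$ is generated as a $\Gq_0$-algebra by the divisor classes $\cO^{s_\be}$, $\be\in\Delta\ssm\Delta_P$ — the quantum analogue of \Lemma{generator} — and then closing by a linear-algebra argument over $\Gq_0$.  Since the equations of \Definition{ms_qkt} are $\Gq_0$-linear and homogeneous in the vector $\{D^w_{u,v}\}$ and $\{N^w_{u,v}\}$ is a solution, it suffices to prove the homogeneous version: if $\{E^w_{u,v}\}$ is a solution such that for every $w\in W^P$ there exist $u_w,v_w\le w$ with $E^w_{u_w,v_w}=0$, then $E\equiv 0$.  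One then applies this with $E=D-N$.

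First I would prove the generation statement, imitating \Lemma{generator}.  Multiplication by $\cO^{s_\be}$ on $\QK_T(X)$ is triangular modulo $q$ in the Schubert basis: its reduction modulo $q$ is multiplication by $\cO^{s_\be}$ in $K^T(X)$, so the coefficient of $\cO^w$ on $\cO^u$ vanishes unless $u\le w$, and — using $\cO^u\cdot\cO_u^\vee=[\cO_{u.P}]$ — the diagonal entry is the restriction $\cO^{s_\be}|_{u.P}=1-[\C_{u.\omega_\be-\omega_\be}]$ (see \Lemma{Ju} and the restriction formulas cited in \Section{qk}).  Adjoining variables $x_\be$ and passing to $\mathbf{k}:=\operatorname{Frac}(\Gq[x_\be:\be\in\Delta\ssm\Delta_P])$, multiplication by $\alpha:=\sum_\be x_\be\cO^{s_\be}$ then reduces modulo $q$ to a triangular operator with diagonal entries $\sum_\be x_\be(1-[\C_{u.\omega_\be-\omega_\be}])$; these are pairwise distinct because $\bigcap_{\be\in\Delta\ssm\Delta_P}\operatorname{Stab}_W(\omega_\be)=W_P$.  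Hence the discriminant of the characteristic polynomial of multiplication by $\alpha$ is nonzero modulo $q$, so it is nonzero, so multiplication by $\alpha$ has $\#W^P$ distinct eigenvalues over $\overline{\mathbf{k}}$, so its minimal polynomial has degree $\#W^P$; comparing with $\dim_{\mathbf{k}}(R\otimes_{\Gq_0}\mathbf{k})=\#W^P$ forces $\alpha$ — hence the family $\{\cO^{s_\be}\}$ — to generate $R\otimes_{\Gq_0}\mathbf{k}$ over $\mathbf{k}$, and faithfully flat descent along $\Gq_0\hookrightarrow\mathbf{k}$ gives that $\{\cO^{s_\be}\}$ generates $R$ over $\Gq_0$.  (If $X$ is cominuscule one may take $\alpha=\cO^{s_\ga}$ directly and dispense with the $x_\be$.)

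Next I would exploit generation.  Viewing a solution $E$ as the $\Gq_0$-bilinear map $E\colon R\times R\to R$, $E(\cO^u,\cO^v)=\sum_w E^w_{u,v}\,\cO^w$, the two identities of \Definition{ms_qkt} say precisely that $E(\cO^{s_\be}\star a,b)=\cO^{s_\be}\star E(a,b)=E(a,\cO^{s_\be}\star b)$ for all $a,b\in R$ and all $\be$.  Since the $\cO^{s_\be}$ generate $R$, an induction on monomials upgrades this to $E(r\star a,b)=r\star E(a,b)=E(a,r\star b)$ for every $r\in R$; pulling $a$ out of the first slot and then $b$ out of the second yields $E(a,b)=z\star a\star b$ with $z:=E(1,1)$.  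Thus every solution is $E^w_{u,v}=(\text{coefficient of }\cO^w\text{ in }z\star\cO^u\star\cO^v)$ for a single element $z\in R$ (the genuine solution $N$ being $z=1$), and it remains to force $z=0$ from the hypothesis that the coefficient of $\cO^w$ in $z\star\cO^{u_w}\star\cO^{v_w}$ vanishes for each $w$.

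Finally, writing $z=\sum_\tau c_\tau\cO^\tau$, those conditions form a linear system $L\vec c=0$ with $L_{w,\tau}=(\text{coefficient of }\cO^w\text{ in }\cO^\tau\star\cO^{u_w}\star\cO^{v_w})\in\Gq$, and I would show $L$ is invertible over $\Gq_0$ by reducing modulo $q$.  The entry $L_{w,\tau}|_{q=0}=\euler{X}(\cO^\tau\cdot\cO^{u_w}\cdot\cO^{v_w}\cdot\cO_w^\vee)$ vanishes unless $\tau\le w$, so $L|_{q=0}$ is triangular for any linear extension of the Bruhat order, while $\cO^w\cdot\cO_w^\vee=[\cO_{w.P}]$ makes its $w$-th diagonal entry $\cO^{u_w}|_{w.P}\cdot\cO^{v_w}|_{w.P}$, a nonzero element of the domain $\Gamma$ since $u_w,v_w\le w$.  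Hence $\det L\ne 0$, so $L\in\GL(\#W^P,\Gq_0)$, so $\vec c=0$, i.e.\ $z=0$ and $E\equiv 0$, which is what we need.  I expect the generation statement of the second paragraph to be the crux: the interplay of reduction modulo $q$ with the fixed-point restriction formula is what makes it go, and it is the place where the argument genuinely rests on (the quantum counterpart of) \Lemma{generator}; the remaining ingredients — $\cO^u\cdot\cO_u^\vee=[\cO_{u.P}]$ and the non-vanishing of $\cO^u|_{w.P}$ for $u\le w$ — are standard.
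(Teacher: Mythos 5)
Your proof is correct, but it takes a genuinely different route from the paper's. The paper never proves a quantum analogue of \Lemma{generator}: it invokes \Corollary{unique_sol_ht} to select $(\#W^P)^3$ equations that already pin down the cohomological constants $\{C^w_{u,v}\}$, forms the corresponding linear system $AD=b$ for $\QK_T(X)$, and proves $\det A\neq 0$ by a two-step degeneration — set $q=0$, then extract leading terms under the equivariant Chern character, recovering the invertible cohomological system. You instead argue entirely at the quantum $K$-theory level: you first show the divisor classes generate $\QK_T(X)\otimes_{\Gq}\Gq_0$ (mod-$q$ triangularity plus distinctness of the fixed-point restrictions $1-[\C_{u.\omega_\be-\omega_\be}]$, mirroring the proof of \Lemma{generator}), then note that the equations of \Definition{ms_qkt} say exactly that the bilinear map $E$ intertwines divisor multiplication, hence all multiplications, forcing $E(a,b)=z\star a\star b$, and finally kill $z$ with a single mod-$q$ triangular determinant whose diagonal entries are the nonzero products $\cO^{u_w}|_{w.P}\cdot\cO^{v_w}|_{w.P}$. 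This bypasses \Section{ms.ht} (in particular \Proposition{Auvwt} and the positivity input of \Lemma{beta}) and the Chern character altogether, and as a byproduct it identifies the full solution space of the quantum Molev--Sagan equations as the $\#W^P$-dimensional family $z\star a\star b$, a quantum-$K$ counterpart of the conjecture stated after \Corollary{unique_alg_ht}; the paper's argument, on the other hand, needs nothing about fixed-point restrictions beyond the Chevalley formula and transfers verbatim to $K^T(X)$ and $\QH_T(X)$. Your two auxiliary inputs — vanishing of $N^{w,0}_{u,v}$ unless $u\le w$ and $v\le w$, and non-vanishing of $\cO^u|_{w.P}$ for $u\le w$ — are standard but should be given a reference (e.g.\ \cite{kostant.kumar:t-equivariant, graham:equivariant, willems:k-theorie}); with that, there is no gap.
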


Before we prove \Proposition{unique_sol_qkt}, we first recall some
facts about the equivariant $K$-theory ring $K^T(X)$.  Let
$\ch : \Gamma \to \wh\La := \prod_{k=0}^\infty H^{2k}_T(X;\Q)$ denote
the {\em equivariant Chern character\/} from
\cite[\S3.1]{edidin.graham:riemann-roch}.  This injective ring
homomorphism is defined by $\ch([\C_\la]) = \exp(c_T(\la))$.  For
$\sigma \in K^T(X)$ we may write $\ch(\sigma) = \sum \ch_m(\sigma)$,
where $\ch_m(\sigma) \in H^{2m}_T(X;\Q)$ is the term of degree $m$.
Set $\ch_m(\sigma)=0$ for $m<0$.  We will say that $\sigma$ has {\em
  degree at least\/} $k$ if $\ch_m(\sigma)=0$ for all $m<k$.  If
$\sigma \neq 0$, then the {\em leading term\/} of $\sigma$ is the
first non-zero term $\ch_m(\sigma)$ (with $m$ minimal).  For
$u,v,w \in W^P$ we set $d(u,v,w) = \ell(u)+\ell(v)-\ell(w)$.  The
structure constant $N^{w,0}_{u,v}$ of $K^T(X)$ has degree at least
$d(u,v,w)$, and we have $\ch_{d(u,v,w)}(N^{w,0}_{u,v}) = C^w_{u,v}$.
This can be proved geometrically using the equivariant Riemann-Roch
formula \cite[Cor.~3.1]{edidin.graham:riemann-roch} (see
\cite[\S4.1]{buch.mihalcea:quantum}), or combinatorially using the
Chevalley formula for $K^T(G/B)$ \cite{lenart.postnikov:affine}.

\begin{proof}[Proof of \Proposition{unique_sol_qkt}]
  According to \Corollary{unique_sol_ht}, the vector $\{C^w_{u,v}\}$
  of structure constants of $H^*_T(X;\Z)$ is the unique solution to
  the equations of \Definition{ms_ht} together with a choice of
  $k = \# W^P$ additional equations of the form $D^w_{u,v}=C^w_{u,v}$.
  Since this is a system of linear equations, we can choose a subset
  of $k^3$ of the equations that are sufficient to determine the
  vector $\{C^w_{u,v}\}$.  Consider the corresponding set of equations
  for $\QK_T(X)$.  More precisely, if the chosen equations for
  $\{C^w_{u,v}\}$ include the first (resp.\ second) equation of
  \Definition{ms_ht} for some $u,v,w \in W^P$ and
  $\be \in \Delta \ssm \Delta_P$, then we use the first (resp.\
  second) equality of \Definition{ms_qkt} given by the same elements
  $u,v,w,\be$.  For each equation of the form $D^w_{u,v}=C^w_{u,v}$ we
  similarly use the corresponding equation $D^w_{u,v}=N^w_{u,v}$.  We
  claim that the vector $\{N^w_{u,v}\}$ is the unique solution to
  these equations for $\QK_T(X)$.  These equations can be stated in
  matrix form as $AD=b$, where $A$ is a $k^3 \times k^3$ matrix and
  $b$ is a vector of length $k^3$, both with entries from $\Gq$, and
  $D = \{D^w_{u,v}\}$ is the vector of indeterminates.  Since
  $\{N^w_{u,v}\}$ satisfies the Molev-Sagan equations for $\QK_T(X)$,
  it suffices to show that the determinant of $A$ is not zero.  For
  this it is enough to show that $\det(\ov{A}) \neq 0$, where $\ov{A}$
  is obtained from $A$ by substituting zero for all quantum parameters
  $q_\be$.  We will show that the leading term of $\det(\ov{A})$ is
  the determinant of the corresponding system of linear equations for
  $H_T(X;\Z)$.

  Number the rows and columns of $A$ from 1 to $k^3$, and define
  integers $d_i$ and $d'_j$ for $1 \leq i,j \leq k^3$ as follows.  If
  the $i$-th row of $A$ represents an equation coming from
  \Definition{ms_qkt} for a triple $(u,v,w) \in (W^P)^3$, then set
  $d_i = d(u,v,w)+1$.  If the $i$-th row of $A$ represents (the left
  hand side of) an equation $D^w_{u,v} = N^w_{u,v}$, then set
  $d_i = d(u,v,w)$.  Finally, if the $j$-th column of $A$ corresponds
  to the indeterminate $D^w_{u,v}$, then set $d'_j = d(u,v,w)$.  It
  follows by inspection of the Molev-Sagan equations that each entry
  $\ov{A}_{ij}$ of $\ov{A}$ has degree at least $d_i-d'_j$.  This
  implies that $\det(\ov{A})$ has degree at least
  $\sum_i d_i - \sum_j d'_j$.  Furthermore, if we replace each entry
  $\ov{A}_{ij}$ of $\ov{A}$ with $\ch_{d_i-d'_j}(\ov{A}_{ij})$, then
  the result is the matrix of coefficients of the original $k^3$
  equations for the structure constants of $H^*_T(X;\Z)$.  Since this
  system of linear equations is known to have a unique solution, we
  deduce that the term of degree $\sum d_i - \sum d'_j$ in
  $\ch(\det(\ov{A}))$ is indeed non-zero.  This completes the proof.
\end{proof}

\begin{cor}\label{cor:unique_alg_qkt}
  Let $H$ be any associative algebra over $\Gq$ that is also a free
  $\Gq$-module with a basis $\{\sigma_u : u \in W^P\}$ indexed by
  $W^P$.  Assume that $H$ satisfies
  \[
  \sigma_{s_\be} \star \sigma_u \ = \ \sigma_u \star \sigma_{s_\be} \ = \
  \sum_{w\in W^P} N^w_{s_\be,u}\, \sigma_w
  \]
  for all $u \in W^P$ and $\be \in \Delta \ssm \Delta_P$, as well as
  the identity $\sigma_1 \star \sigma_1 = \sigma_1$.  Then the
  homomorphism of $\Gq$-modules $H \to \QK_T(X)$ defined by
  $\sigma_u \mapsto \cO^u$ is an isomorphism of rings.
\end{cor}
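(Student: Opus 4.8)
The plan is to run the argument of \Corollary{unique_alg_ht} essentially verbatim, with \Proposition{unique_sol_qkt} taking the place of \Corollary{unique_sol_ht}. First I would introduce the structure constants $\{D^w_{u,v}\}\subset\Gq$ of $H$ with respect to the given basis, defined by $\sigma_u\star\sigma_v=\sum_{w\in W^P}D^w_{u,v}\,\sigma_w$. The hypothesis on products with the divisor classes $\sigma_{s_\be}$ says exactly that $D^w_{s_\be,u}=D^w_{u,s_\be}=N^w_{s_\be,u}$ for all $u,w\in W^P$ and $\be\in\Delta\ssm\Delta_P$.

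The second step is to check that $\{D^w_{u,v}\}$ solves the Molev-Sagan equations of \Definition{ms_qkt}. Expanding the associativity relations $(\sigma_{s_\be}\star\sigma_u)\star\sigma_v=\sigma_{s_\be}\star(\sigma_u\star\sigma_v)$ and $(\sigma_u\star\sigma_v)\star\sigma_{s_\be}=\sigma_u\star(\sigma_v\star\sigma_{s_\be})$ in the basis $\{\sigma_w\}$ and collecting the coefficient of $\sigma_w$ yields
\[
\sum_{u'\in W^P} N^{u'}_{s_\be,u}\,D^w_{u',v} \ = \ \sum_{w'\in W^P} N^{w}_{s_\be,w'}\,D^{w'}_{u,v} \ = \ \sum_{v'\in W^P} N^{v'}_{s_\be,v}\,D^{w}_{u,v'} ,
\]
where for the second equality, and for rewriting $\sigma_v\star\sigma_{s_\be}$ as $\sigma_{s_\be}\star\sigma_v$, one uses the commutation $\sigma_u\star\sigma_{s_\be}=\sigma_{s_\be}\star\sigma_u$. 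This is precisely \Definition{ms_qkt}, so $\{D^w_{u,v}\}$ is a solution of the Molev-Sagan equations for $\QK_T(X)$.

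The third step pins down a boundary value. Since $X^1=X$ we have $\cO^1=[\cO_X]=1$, the multiplicative identity of $\QK_T(X)$, so $N^w_{1,1}=\delta_{1,w}$; on the other hand the hypothesis $\sigma_1\star\sigma_1=\sigma_1$ forces $D^w_{1,1}=\delta_{1,w}$. Hence $D^w_{1,1}=N^w_{1,1}$ for every $w\in W^P$, and taking $u=v=1$ (with $1\le w$ automatic) this is an instance of the additional condition of \Proposition{unique_sol_qkt}. That proposition then gives $D^w_{u,v}=N^w_{u,v}$ for all $u,v,w\in W^P$, so the $\Gq$-linear bijection $\sigma_u\mapsto\cO^u$ carries the product of $H$ onto the product of $\QK_T(X)$ and is a ring isomorphism.

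There is no genuine obstacle internal to this corollary; the real work is already packaged in \Proposition{unique_sol_qkt}. The only two points that deserve a moment of care are the bookkeeping in the second step — making sure that both equalities of \Definition{ms_qkt} (not just one) fall out of associativity together with the commutation relation — and the fact invoked in the third step that $\cO^1$ remains the identity after quantum deformation, so that $N^w_{1,1}=\delta_{1,w}$; this is standard from Givental's construction of $\QK_T(X)$.
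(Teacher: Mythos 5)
Your proposal is correct and is exactly the argument the paper intends: the corollary is stated without proof because it is the verbatim transcription of the proof of \Corollary{unique_alg_ht}, replacing \Corollary{unique_sol_ht} by \Proposition{unique_sol_qkt}, which is what you do (associativity plus the two-sided Chevalley hypothesis gives the Molev--Sagan equations of \Definition{ms_qkt}, and $\sigma_1\star\sigma_1=\sigma_1$ together with $N^w_{1,1}=\delta_{1,w}$ supplies the additional condition with $u=v=1$). Your side remark that $\cO^1$ is the identity of $\QK_T(X)$, so $N^w_{1,1}=\delta_{1,w}$, is indeed the only external fact needed, and it is standard from Givental's construction.
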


\begin{remark}\label{remark:gk}
  Gorbounov and Korff have used ideas from integrable systems to
  construct an algebra called $qh^*(\Gr(m,n))$, together with a basis
  for this algebra that corresponds to the Schubert classes of the
  Grassmannian $\Gr(m,n)$ \cite{gorbounov.korff:quantum}\footnote{The
    parameter $\be$ used in \cite{gorbounov.korff:quantum} must be
    replaced with $-1$ in order to match our notation.}.  It follows
  from \cite[Cor.~3.18]{gorbounov.korff:quantum} that this algebra
  satisfies a Chevalley formula that agrees with our
  \Theorem{qktchev}.  We therefore deduce from
  \Corollary{unique_alg_qkt} that the algebra $qh^*(\Gr(m,n))$ has the
  same Schubert structure constants as the equivariant quantum
  $K$-theory ring $\QK_T(\Gr(m,n))$.  This proves that the two
  algebras are isomorphic, which is Conjecture~1.2 in
  \cite{gorbounov.korff:quantum}.  In particular, the presentation
  (Thm.~5.16) and Giambelli formula (Cor.~5.13) proved in
  \cite{gorbounov.korff:quantum} are valid for the equivariant quantum
  $K$-theory of Grassmannians.  We note that the relation of
  $qh^*(\Gr(m,n))$ with the equivariant quantum cohomology ring
  $\QH_T(\Gr(m,n))$ and the (non-equivariant) quantum $K$-theory ring
  $\QK(\Gr(m,n))$ was established in \cite{gorbounov.korff:quantum} by
  using the structure theorems from \cite{mihalcea:equivariant*1,
    buch.mihalcea:quantum}.  An alternative geometric proof of the
  presentation of $\QK(\Gr(m,n))$ has recently been given by Gonzalez
  and Woodward by writing the Grassmannian as a GIT quotient of a
  space of matrices \cite{gonzalez.woodward:quantum}.
\end{remark}

%\bibliography{/home/asbuch/TeX/BibTeX/database.bib}

\begin{thebibliography}{10}

\bibitem{andersen.jantzen.ea:representations}
H.~H. Andersen, J.~C. Jantzen, and W.~Soergel, \emph{Representations of quantum
  groups at a {$p$}th root of unity and of semisimple groups in characteristic
  {$p$}: independence of {$p$}}, Ast\'erisque (1994), no.~220, 321. \MR{1272539
  (95j:20036)}

\bibitem{anderson.chen:positivity}
D.~Anderson and L.~Chen, \emph{Positivity of equivariant {G}romov-{W}itten
  invariants}, Math. Res. Lett. \textbf{22} (2015), no.~1, 1--9. \MR{3342175}

\bibitem{anderson.griffeth.ea:positivity}
D.~Anderson, S.~Griffeth, and E.~Miller, \emph{Positivity and {K}leiman
  transversality in equivariant {$K$}-theory of homogeneous spaces}, J. Eur.
  Math. Soc. (JEMS) \textbf{13} (2011), no.~1, 57--84. \MR{2735076}

\bibitem{billey:kostant}
S.~Billey, \emph{Kostant polynomials and the cohomology ring for {$G/B$}},
  Proc. Nat. Acad. Sci. U.S.A. \textbf{94} (1997), no.~1, 29--32. \MR{1425869
  (98e:14051)}

\bibitem{bourbaki:elements*51}
N.~Bourbaki, \emph{\'El\'ements de math\'ematique. {F}asc. {XXXIV}. {G}roupes
  et alg\`ebres de {L}ie. {C}hapitre {IV}: {G}roupes de {C}oxeter et syst\`emes
  de {T}its. {C}hapitre {V}: {G}roupes engendr\'es par des r\'eflexions.
  {C}hapitre {VI}: syst\`emes de racines}, Actualit\'es Scientifiques et
  Industrielles, No. 1337, Hermann, Paris, 1968. \MR{0240238}

\bibitem{brion:positivity}
M.~Brion, \emph{Positivity in the {G}rothendieck group of complex flag
  varieties}, J. Algebra \textbf{258} (2002), no.~1, 137--159, Special issue in
  celebration of Claudio Procesi's 60th birthday. \MR{1958901 (2003m:14017)}

\bibitem{buch:quantum}
A.~S. Buch, \emph{Quantum cohomology of {G}rassmannians}, Compositio Math.
  \textbf{137} (2003), no.~2, 227--235. \MR{1985005}

\bibitem{buch:mutations}
\bysame, \emph{Mutations of puzzles and equivariant cohomology of two-step flag
  varieties}, Ann. of Math. (2) \textbf{182} (2015), no.~1, 173--220.
  \MR{3374959}

\bibitem{buch.chaput.ea:projected}
A.~S. Buch, P.-E. Chaput, L.~Mihalcea, and N.~Perrin, \emph{Projected
  {G}romov-{W}itten varieties in cominuscule spaces}, arXiv:1312.2468.

\bibitem{buch.chaput.ea:finiteness}
\bysame, \emph{Finiteness of cominuscule quantum {$K$}-theory}, Ann. Sci. \'Ec.
  Norm. Sup\'er. (4) \textbf{46} (2013), no.~3, 477--494 (2013). \MR{3099983}

\bibitem{buch.chaput.ea:rational}
\bysame, \emph{Rational connectedness implies finiteness of quantum
  {$K$}-theory}, Asian J. Math. \textbf{20} (2016), no.~1, 117--122.
  \MR{3460760}

\bibitem{buch.kresch.ea:gromov-witten}
A.~S. Buch, A.~Kresch, and H.~Tamvakis, \emph{Gromov-{W}itten invariants on
  {G}rassmannians}, J. Amer. Math. Soc. \textbf{16} (2003), no.~4, 901--915.
  \MR{1992829}

\bibitem{buch.mihalcea:quantum}
A.~S. Buch and L.~Mihalcea, \emph{Quantum {$K$}-theory of {G}rassmannians},
  Duke Math. J. \textbf{156} (2011), no.~3, 501--538. \MR{2772069}

\bibitem{buch.mihalcea:curve}
\bysame, \emph{Curve neighborhoods of {S}chubert varieties}, J. Differential
  Geom. \textbf{99} (2015), no.~2, 255--283. \MR{3302040}

\bibitem{buch.ravikumar:pieri}
A.~S. Buch and V.~Ravikumar, \emph{Pieri rules for the {$K$}-theory of
  cominuscule {G}rassmannians}, J. Reine Angew. Math. \textbf{668} (2012),
  109--132. \MR{2948873}

\bibitem{buch.samuel:k-theory}
A.~S. Buch and M.~Samuel, \emph{{$K$}-theory of minuscule varieties},
  arXiv:1306.5419, to appear in J. Reine Angew. Math.

\bibitem{chaput.manivel.ea:quantum*1}
P.-E. Chaput, L.~Manivel, and N.~Perrin, \emph{Quantum cohomology of minuscule
  homogeneous spaces}, Transform. Groups \textbf{13} (2008), no.~1, 47--89.
  \MR{2421317 (2009e:14095)}

\bibitem{chaput.perrin:quantum}
P.-E. Chaput and N.~Perrin, \emph{On the quantum cohomology of adjoint
  varieties}, Proc. Lond. Math. Soc. (3) \textbf{103} (2011), no.~2, 294--330.
  \MR{2821244 (2012g:14103)}

\bibitem{chaput.perrin:rationality}
\bysame, \emph{Rationality of some {G}romov-{W}itten varieties and application
  to quantum {$K$}-theory}, Commun. Contemp. Math. \textbf{13} (2011), no.~1,
  67--90. \MR{2772579 (2012k:14077)}

\bibitem{chevalley:decompositions}
C.~Chevalley, \emph{Sur les d\'ecompositions cellulaires des espaces {$G/B$}},
  Algebraic groups and their generalizations: classical methods ({U}niversity
  {P}ark, {PA}, 1991), Proc. Sympos. Pure Math., vol.~56, Amer. Math. Soc.,
  Providence, RI, 1994, With a foreword by Armand Borel, pp.~1--23. \MR{1278698
  (95e:14041)}

\bibitem{chriss.ginzburg:representation}
N.~Chriss and V.~Ginzburg, \emph{Representation theory and complex geometry},
  Birkh\"auser Boston Inc., Boston, MA, 1997. \MR{1433132 (98i:22021)}

\bibitem{ciocan-fontanine.kim.ea:abeliannonabelian}
I.~Ciocan-Fontanine, B.~Kim, and C.~Sabbah, \emph{The abelian/nonabelian
  correspondence and {F}robenius manifolds}, Invent. Math. \textbf{171} (2008),
  no.~2, 301--343. \MR{2367022 (2010a:14089)}

\bibitem{edidin.graham:riemann-roch}
D.~Edidin and W.~Graham, \emph{Riemann-{R}och for equivariant {C}how groups},
  Duke Math. J. \textbf{102} (2000), no.~3, 567--594. \MR{1756110
  (2001f:14018)}

\bibitem{fulton:intersection}
W.~Fulton, \emph{Intersection theory}, second ed., Ergebnisse der Mathematik
  und ihrer Grenzgebiete. 3. Folge. A Series of Modern Surveys in Mathematics
  [Results in Mathematics and Related Areas. 3rd Series. A Series of Modern
  Surveys in Mathematics], vol.~2, Springer-Verlag, Berlin, 1998. \MR{1644323
  (99d:14003)}

\bibitem{fulton.pandharipande:notes}
W.~Fulton and R.~Pandharipande, \emph{Notes on stable maps and quantum
  cohomology}, Algebraic geometry---{S}anta {C}ruz 1995, Proc. Sympos. Pure
  Math., vol.~62, Amer. Math. Soc., Providence, RI, 1997, pp.~45--96.
  \MR{1492534 (98m:14025)}

\bibitem{givental:wdvv}
A.~Givental, \emph{On the {WDVV} equation in quantum {$K$}-theory}, Michigan
  Math. J. \textbf{48} (2000), 295--304, Dedicated to William Fulton on the
  occasion of his 60th birthday. \MR{1786492 (2001m:14078)}

\bibitem{gonzalez.woodward:quantum}
E.~Gonzalez and C.~Woodward, \emph{Quantum {K}irwan and quantum {$K$}-theory},
  in preparation.

\bibitem{gorbounov.korff:quantum}
V.~Gorbounov and C.~Korff, \emph{Quantum integrability and generalised quantum
  {S}chubert calculus}, arXiv:1408.4718.

\bibitem{goresky.kottwitz.ea:equivariant}
M.~Goresky, R.~Kottwitz, and R.~MacPherson, \emph{Equivariant cohomology,
  {K}oszul duality, and the localization theorem}, Invent. Math. \textbf{131}
  (1998), no.~1, 25--83. \MR{1489894 (99c:55009)}

\bibitem{graham:positivity}
W.~Graham, \emph{Positivity in equivariant {S}chubert calculus}, Duke Math. J.
  \textbf{109} (2001), no.~3, 599--614. \MR{1853356 (2002h:14083)}

\bibitem{graham:equivariant}
\bysame, \emph{Equivariant {$K$}-theory and {S}chubert varieties}, preprint,
  2002.

\bibitem{griffeth.ram:affine}
S.~Griffeth and A.~Ram, \emph{Affine {H}ecke algebras and the {S}chubert
  calculus}, European J. Combin. \textbf{25} (2004), no.~8, 1263--1283.
  \MR{2095481 (2005h:14118)}

\bibitem{hartshorne:algebraic*1}
R.~Hartshorne, \emph{Algebraic geometry}, Springer-Verlag, New York, 1977,
  Graduate Texts in Mathematics, No. 52. \MR{0463157 (57 \#3116)}

\bibitem{humphreys:linear}
J.~E. Humphreys, \emph{Linear algebraic groups}, Springer-Verlag, New York,
  1975, Graduate Texts in Mathematics, No. 21. \MR{0396773 (53 \#633)}

\bibitem{humphreys:reflection}
\bysame, \emph{Reflection groups and {C}oxeter groups}, Cambridge Studies in
  Advanced Mathematics, vol.~29, Cambridge University Press, Cambridge, 1990.
  \MR{1066460 (92h:20002)}

\bibitem{iritani.milanov.ea:reconstruction}
H.~Iritani, T.~Milanov, and V.~Tonita, \emph{Reconstruction and convergence in
  quantum {$K$}-theory via difference equations}, Int. Math. Res. Not. IMRN
  (2015), no.~11, 2887--2937. \MR{3373040}

\bibitem{knutson:puzzles}
A.~Knutson, \emph{Puzzles, {P}ositroid varieties, and equivariant {$K$}-theory
  of {G}rassmannians}, arXiv:1008.4302.

\bibitem{knutson:schubert}
\bysame, \emph{Schubert patches degenerate to subword complexes}, Transform.
  Groups \textbf{13} (2008), no.~3-4, 715--726. \MR{2452612}

\bibitem{knutson.lam.ea:positroid}
A.~Knutson, T.~Lam, and D.~Speyer, \emph{Positroid varieties: juggling and
  geometry}, Compos. Math. \textbf{149} (2013), no.~10, 1710--1752.
  \MR{3123307}

\bibitem{knutson.lam.ea:projections}
\bysame, \emph{Projections of {R}ichardson varieties}, J. Reine Angew. Math.
  \textbf{687} (2014), 133--157. \MR{3176610}

\bibitem{knutson.tao:puzzles}
A.~Knutson and T.~Tao, \emph{Puzzles and (equivariant) cohomology of
  {G}rassmannians}, Duke Math. J. \textbf{119} (2003), no.~2, 221--260.
  \MR{1997946}

\bibitem{kontsevich.manin:gromov-witten}
M.~Kontsevich and Yu. Manin, \emph{Gromov-{W}itten classes, quantum cohomology,
  and enumerative geometry}, Comm. Math. Phys. \textbf{164} (1994), no.~3,
  525--562. \MR{1291244 (95i:14049)}

\bibitem{kostant.kumar:nil*1}
B.~Kostant and S.~Kumar, \emph{The nil {H}ecke ring and cohomology of {$G/P$}
  for a {K}ac-{M}oody group {$G$}}, Adv. in Math. \textbf{62} (1986), no.~3,
  187--237. \MR{866159 (88b:17025b)}

\bibitem{kostant.kumar:t-equivariant}
\bysame, \emph{{$T$}-equivariant {$K$}-theory of generalized flag varieties},
  J. Differential Geom. \textbf{32} (1990), no.~2, 549--603. \MR{1072919
  (92c:19006)}

\bibitem{kumar:kac-moody}
S.~Kumar, \emph{Kac-{M}oody groups, their flag varieties and representation
  theory}, Progress in Mathematics, vol. 204, Birkh\"auser Boston, Inc.,
  Boston, MA, 2002. \MR{1923198}

\bibitem{lee:quantum*1}
Y.-P. Lee, \emph{Quantum {$K$}-theory. {I}. {F}oundations}, Duke Math. J.
  \textbf{121} (2004), no.~3, 389--424. \MR{2040281 (2005f:14107)}

\bibitem{lee.pandharipande:reconstruction}
Y.-P. Lee and R.~Pandharipande, \emph{A reconstruction theorem in quantum
  cohomology and quantum {$K$}-theory}, Amer. J. Math. \textbf{126} (2004),
  no.~6, 1367--1379. \MR{2102400 (2006c:14082)}

\bibitem{lenart.maeno:quantum}
C.~Lenart and T.~Maeno, \emph{Quantum {G}rothendieck polynomials},
  math.CO/0608232.

\bibitem{lenart.postnikov:affine}
C.~Lenart and A.~Postnikov, \emph{Affine {W}eyl groups in {$K$}-theory and
  representation theory}, Int. Math. Res. Not. IMRN (2007), no.~12, Art. ID
  rnm038, 65. \MR{2344548}

\bibitem{mihalcea:equivariant}
L.~Mihalcea, \emph{Equivariant quantum {S}chubert calculus}, Adv. Math.
  \textbf{203} (2006), no.~1, 1--33. \MR{2231042}

\bibitem{mihalcea:positivity}
\bysame, \emph{Positivity in equivariant quantum {S}chubert calculus}, Amer. J.
  Math. \textbf{128} (2006), no.~3, 787--803. \MR{2230925}

\bibitem{mihalcea:equivariant*1}
\bysame, \emph{On equivariant quantum cohomology of homogeneous spaces:
  {C}hevalley formulae and algorithms}, Duke Math. J. \textbf{140} (2007),
  no.~2, 321--350. \MR{2359822}

\bibitem{molev.sagan:littlewood-richardson}
A.~Molev and B.~Sagan, \emph{A {L}ittlewood-{R}ichardson rule for factorial
  {S}chur functions}, Trans. Amer. Math. Soc. \textbf{351} (1999), no.~11,
  4429--4443. \MR{1621694 (2000a:05212)}

\bibitem{okounkov:quantum}
A.~Okounkov, \emph{Quantum immanants and higher {C}apelli identities},
  Transform. Groups \textbf{1} (1996), no.~1-2, 99--126. \MR{1390752}

\bibitem{okounkov.olshanski:shifted*2}
A.~Okounkov and G.~Olshanski, \emph{Shifted {S}chur functions}, St. Petersburg
  Math. J. \textbf{9} (1998), no.~2, 239--300. \MR{1468548}

\bibitem{pechenik.yong:equivariant}
O.~Pechenik and A.~Yong, \emph{Equivariant ${K}$-theory of {G}rassmannians},
  arXiv:1506.01992.

\bibitem{perrin:small*1}
N.~Perrin, \emph{Small resolutions of minuscule {S}chubert varieties}, Compos.
  Math. \textbf{143} (2007), no.~5, 1255--1312. \MR{2360316}

\bibitem{proctor:bruhat}
R.~A. Proctor, \emph{Bruhat lattices, plane partition generating functions, and
  minuscule representations}, European J. Combin. \textbf{5} (1984), no.~4,
  331--350. \MR{782055 (86h:17007)}

\bibitem{richardson:intersections}
R.~W. Richardson, \emph{Intersections of double cosets in algebraic groups},
  Indag. Math. (N.S.) \textbf{3} (1992), no.~1, 69--77. \MR{1157520
  (93b:20081)}

\bibitem{ruan.tian:mathematical}
Y.~Ruan and G.~Tian, \emph{A mathematical theory of quantum cohomology}, Math.
  Res. Lett. \textbf{1} (1994), no.~2, 269--278. \MR{1266766 (95b:58025)}

\bibitem{stembridge:fully}
J.~R. Stembridge, \emph{On the fully commutative elements of {C}oxeter groups},
  J. Algebraic Combin. \textbf{5} (1996), no.~4, 353--385. \MR{1406459
  (97g:20046)}

\bibitem{taipale:k-theoretic}
K.~Taipale, \emph{K-theoretic {J}-functions of type {A} flag varieties}, Int.
  Math. Res. Not. IMRN (2013), no.~16, 3647--3677. \MR{3090705}

\bibitem{willems:k-theorie}
M.~Willems, \emph{{$K$}-th\'eorie \'equivariante des tours de {B}ott.
  {A}pplication \`a la structure multiplicative de la {$K$}-th\'eorie
  \'equivariante des vari\'et\'es de drapeaux}, Duke Math. J. \textbf{132}
  (2006), no.~2, 271--309. \MR{2219259 (2007b:19009)}

\bibitem{witten:verlinde}
E.~Witten, \emph{The {V}erlinde algebra and the cohomology of the
  {G}rassmannian}, Geometry, topology, \& physics, Conf. Proc. Lecture Notes
  Geom. Topology, IV, Int. Press, Cambridge, MA, 1995, pp.~357--422.
  \MR{1358625 (98c:58016)}

\end{thebibliography}
%\bibliographystyle{amsplain}

\providecommand{\bysame}{\leavevmode\hbox to3em{\hrulefill}\thinspace}
\providecommand{\MR}{\relax\ifhmode\unskip\space\fi MR }
% \MRhref is called by the amsart/book/proc definition of \MR.
\providecommand{\MRhref}[2]{%
  \href{http://www.ams.org/mathscinet-getitem?mr=#1}{#2}
}
\providecommand{\href}[2]{#2}

\end{document}